\newcommand{\arxiv}[1]{\href{http://arxiv.org/abs/#1}{\tt arXiv:\nolinkurl{#1}}}
\newcommand{\arXiv}[1]{\href{http://arxiv.org/abs/#1}{\tt arXiv:\nolinkurl{#1}}}
\newcommand{\googlebooks}[1]{(preview at \href{http://books.google.com/books?id=#1}{google books})}
\definecolor{dark-red}{rgb}{0.7,0.25,0.25}
\definecolor{dark-blue}{rgb}{0.15,0.15,0.55}
\definecolor{medium-blue}{rgb}{0,0,.8}
\definecolor{DarkGreen}{RGB}{0,150,0}
\definecolor{rho}{named}{red}
\theoremstyle{plain}
\newtheorem{thm}{Theorem}[section]
\newtheorem*{thm*}{Theorem}
\newtheorem{thmalpha}{Theorem}
\newtheorem*{cor*}{Corollary}
\newtheorem*{conj*}{Conjecture}
\newtheorem*{lem*}{Lemma}
\newtheorem{lem}[thm]{Lemma}
\newtheorem{prop}[thm]{Proposition}
\newtheorem*{quest*}{Question}
\newtheorem*{claim*}{Claim}
\theoremstyle{definition}
\newtheorem{defn}[thm]{Definition}
\theoremstyle{remark}
\newtheorem{rem}[thm]{Remark}
\numberwithin{equation}{section}
\DeclareMathOperator{\tDiff}
{\mathrm{D}\!\widetilde{\,i\hspace{1.5mm}}\hspace{-1.5mm}\mathrm{ff}} 
\DeclareMathOperator{\Diff}{Diff}
\DeclareMathOperator{\Ann}{Ann}
\DeclareMathOperator{\tAnn}{A\widetilde{\!\!\phantom{i}n\phantom{i}\!\!}n}
\DeclareMathOperator{\Bigon}{Big}
\DeclareMathOperator{\id}{id}
\DeclareMathOperator{\Exp}{Exp}
\DeclareMathOperator{\Vir}{Vir}
\newcommand{\comment}[1]{}
\newcommand{\be}{\begin{enumerate}[label=(\arabic*)]}
\newcommand{\ee}{\end{enumerate}}
\newcommand{\Z}{\mathbb{Z}}
\newcommand{\C}{\mathbb{C}}
\newcommand{\CC}{\mathbb{C}}
\newcommand{\ip}[1]{\langle #1 \rangle}
\newcommand{\norm}[1]{\left\| #1 \right\|}
\newcommand{\abs}[1]{\left| #1 \right|}
\def\semicolon{;}
\def\applytolist#1{
    \expandafter\def\csname multi#1\endcsname##1{
        \def\multiack{##1}\ifx\multiack\semicolon
            \def\next{\relax}
        \else
            \csname #1\endcsname{##1}
            \def\next{\csname multi#1\endcsname}
        \fi
        \next}
    \csname multi#1\endcsname}
\def\calc#1{\expandafter\def\csname c#1\endcsname{{\mathcal #1}}}
\def\bbc#1{\expandafter\def\csname bb#1\endcsname{{\mathbb #1}}}
\def\bfc#1{\expandafter\def\csname bf#1\endcsname{{\mathbf #1}}}
\def\sfc#1{\expandafter\def\csname s#1\endcsname{{\sf #1}}}
\newcommand{\noshow}[1]{}
\newcommand{\MR}[1]{}
\def\hookplus{\!\tikz{
\useasboundingbox (-.03,0) rectangle (.01,.1); \node[scale=.85] at (.32,.25) {$\scriptscriptstyle +$};
}
\hookrightarrow}
\tikzset{
	super thick/.style={line width=3pt}
}
\tikzstyle{shaded}=[fill=red!10!blue!20!gray!30!white]
\tikzstyle{unshaded}=[fill=white]
\tikzstyle{empty box}=[circle, draw, thick, fill=white, opaque, inner sep=2mm]
\tikzstyle{annular}=[scale=.7, inner sep=1mm, baseline]
\tikzstyle{rectangular}=[scale=.75, inner sep=1mm, baseline=-.1cm]
\tikzstyle{mid>}=[decoration={markings, mark=at position 0.5 with {\arrow{>}}}, postaction={decorate}]
\tikzstyle{mid<}=[decoration={markings, mark=at position 0.5 with {\arrow{<}}}, postaction={decorate}]
\tikzstyle{over}=[double, draw=white, super thick, double=]
\def\pounds#1{} 
\title{Integrating positive energy representations of the Virasoro algebra}
\author{Andr\'e G. Henriques, James E. Tener}
\date{}
\begin{document}

\maketitle

\abstract{
We show that every unitary positive energy representation $W$ of the Virasoro algebra exponentiates to a holomorphic $*$-representation of the semigroup of annuli by bounded operators on the Hilbert space completion of $W$.
We use this to show that every representation of the Virasoro conformal net also carries a  representation of the semigroup of annuli of the same kind. 
}

\setcounter{tocdepth}{1}
\tableofcontents

\section{Introduction}

Fix $c\in \bbR_{\ge0}$.
The Virasoro Fr\'echet Lie algebra is the Lie algebra
\[
\textstyle \bigg\{\sum_{n\in \bbZ} a_n L_n+ k\text{\bf 1} \,\bigg|\, 
\begin{aligned}
&a_n, k \in \bbC\\[-1mm]
&\text{$a_n$ is rapidly decreasing as $|n|\to\infty$}
\end{aligned}
\bigg\}
\]
with $\text{\bf 1}$ central, and Lie bracket given by
$[L_m, L_n] = (m-n)L_{n+m} + \frac{c}{12} (m^3-m) \delta_{n+m,0}\cdot \text{\bf 1}$.
Rewriting $L_n = z^{n+1}\tfrac{\partial}{\partial z}$, and omitting the symbol $\text{\bf 1}$, an alternative formula for the Virasoro Lie bracket is provided by
\[
\big[f(z)\tfrac{\partial}{\partial z},g(z)\tfrac{\partial}{\partial z}\big]
\;=\;
(f'g-gf')
\tfrac{\partial}{\partial z}
+
\omega_{Vir}\big(f(z)\tfrac{\partial}{\partial z},g(z)\tfrac{\partial}{\partial z}\big),
\]
where
\begin{equation}\label{def: omega_Vir}
\omega_{Vir}\big(f(z)\tfrac{\partial}{\partial z},g(z)\tfrac{\partial}{\partial z}\big)
\;=\; \tfrac c{12}\int_{S^1}\tfrac{\partial^3 f}{\partial z^3}(z)\,g(z)\,\tfrac{dz}{2\pi i}
\end{equation}
is the so-called \emph{Virasoro cocycle}.

In our companion paper \cite{HenriquesTener24ax}, we showed that the positive cone $\{f(z)\tfrac{\partial}{\partial z}+k\text{\bf 1}:\mathrm{Re}(f(z)/z)\le 0\}$ inside the Virasoro Fr\'echet Lie algebra 
exponentiates to a central extension $\tAnn_c$ of the the Segal-Neretin  semigroup of annuli
\[
\hspace{-2mm}\Ann:=
\left\{
\parbox{13cm}{equiv.\,classes of pairs of smooth Jordan curves $\gamma_{in},\gamma_{out}:S^1\to \bbC$ satisfying
\centerline{$\mathrm{Int}(\gamma_{in})\subseteq \mathrm{Int}(\gamma_{out})$, where $(\gamma_{in},\gamma_{out})\sim(\gamma'_{in},\gamma'_{out})$ if\, $\exists$ a diffeomorphism}
\centerline{$f:\bbC\to \bbC$ \,s.t.\,\,$f\circ \gamma_{in/out}=\gamma'_{in/out}$, and $f|_{\mathrm{Int}(\gamma_{out}) \setminus \overline{\mathrm{Int}(\gamma_{in})}}$ is holomorphic.}}
\right\}
\]
Here, $\mathrm{Int}(\gamma)$ denotes the open disc whose boundary is the image of the Jordan curve~$\gamma$.\medskip

The main results of this paper are:

\begin{thmalpha}\label{thm: intro thm 1}
    Every positive energy representation of $W$ of the Virasoro algebra $\Vir_c$ exponentiates to a holomorphic $*$-representation by bounded operators of $\tAnn_c$ on the Hilbert space completion of $W$.
\end{thmalpha}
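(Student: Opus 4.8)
Write $\cH$ for the Hilbert space completion of $W$, and retain the finite-energy subspace $W=\bigoplus_n W_n$ (the joint eigenspaces of $L_0$, so $\pi(L_0)\ge 0$ on $W$) as a dense invariant domain. Two inputs are used freely. First, the energy bounds $\|\pi(L_n)v\|\le \mathrm{const}\cdot(1+|n|)^{3/2}\,\|(L_0+\mathbf 1)v\|$, valid for every positive-energy $\Vir_c$-module; these guarantee that the whole Virasoro Fr\'echet Lie algebra acts on $W$ and, by the standard iteration, that every finite-energy vector is an analytic vector for $\pi(X)$ for each $X$ in that Lie algebra. Second, the Goodman--Wallach/Toledano-Laredo theorem that $W$ integrates to a strongly continuous projective unitary representation of $\Diff^+(S^1)$, i.e.\ an honest unitary representation $U$ of the central extension of $\Diff^+(S^1)$; by the companion paper this central extension is the boundary face $\exp\{f(z)\tfrac{\partial}{\partial z}+k\mathbf 1:\mathrm{Re}(f(z)/z)=0\}$ of $\tAnn_c=\exp\cP$, and its derived representation $dU$ is the Witt-algebra part of $\pi$.

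The plan is to define $\pi$ on $W$ first and then establish boundedness, multiplicativity and holomorphy. For $X$ in the positive cone $\cP$ set $\pi(\exp X):=\sum_{n\ge0}\tfrac1{n!}\pi(X)^n$ on $W$ --- convergent for $X$ small by analyticity, and extended to all of $\cP$ by $\pi(\exp X)=\pi(\exp\tfrac XN)^N$ --- which defines an operator $W\to\cH$ whose derivative at the identity is the given $\Vir_c$-action, so this is the sought exponentiation. For boundedness I would invoke the decomposition, furnished by the companion paper, of every \emph{proper} annulus as $A=g_1\cdot q^{L_0}\cdot g_2$ with $g_1,g_2$ in the central extension of $\Diff^+(S^1)$ and $0<q<1$ (this is just the uniformization of an annulus of modulus $q$, absorbing the two boundary parametrizations into $g_1$ and $g_2$), the remaining elements of $\tAnn_c$ lying in the closure of the set of proper annuli. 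One then checks that $\pi(\exp X)=U(g_1)\,q^{\,\overline{\pi(L_0)}}\,U(g_2)$ whenever $\exp X=g_1q^{L_0}g_2$; the right-hand side is a bounded operator of norm $\le\|q^{\,\overline{\pi(L_0)}}\|\le1$ since $\pi(L_0)\ge0$, and it is independent of the factorization because the only ambiguity is a rotation $\rho_\theta$, while $U(\rho_\theta)=e^{i\theta\,\overline{\pi(L_0)}}$ commutes with $q^{\,\overline{\pi(L_0)}}$. Thus $\pi$ extends to bounded operators on $\cH$ (and, by the uniform norm bound together with holomorphy below, to the boundary of $\tAnn_c$ by continuity). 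The $*$-property is then immediate: $A^{*}=g_2^{-1}q^{L_0}g_1^{-1}$, so $\pi(A^{*})=U(g_2)^{*}q^{\,\overline{\pi(L_0)}}U(g_1)^{*}=\big(U(g_1)q^{\,\overline{\pi(L_0)}}U(g_2)\big)^{*}=\pi(A)^{*}$, using $(q^{L_0})^{*}=q^{L_0}$ in $\tAnn_c$ and $U(g)^{*}=U(g^{-1})$.

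It remains to prove multiplicativity and holomorphy. Holomorphy follows from the weak-holomorphy-plus-local-boundedness criterion for $B(\cH)$-valued maps: the local norm bound is the one just obtained, and the matrix coefficients $\langle\pi(\exp X)v,w\rangle=\sum_n\tfrac1{n!}\langle\pi(X)^nv,w\rangle$ (for $v,w\in W$) are convergent power series, hence holomorphic in the complex-manifold charts on $\tAnn_c$ provided by the companion paper. For multiplicativity $\pi(A)\pi(B)=\pi(AB)$, writing $A=g_1q^{L_0}g_2$ and $B=h_1p^{L_0}h_2$ one has $AB=g_1\big(q^{L_0}(g_2h_1)p^{L_0}\big)h_2$, so everything reduces to a ``sandwich identity'': if $q^{L_0}\psi p^{L_0}=g'r^{L_0}h'$ holds in $\tAnn_c$, then $q^{\,\overline{\pi(L_0)}}\,U(\psi)\,p^{\,\overline{\pi(L_0)}}=U(g')\,r^{\,\overline{\pi(L_0)}}\,U(h')$ holds in $B(\cH)$. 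Here is where the argument has real content: the two $L_0$-contractions are smoothing (by the energy bounds, $q^{\,\overline{\pi(L_0)}}\cH$ and $p^{\,\overline{\pi(L_0)}}\cH$ consist of analytic vectors for $\pi$), so both sides can be evaluated on smooth vectors through the Lie-algebra action, and the identity becomes a Baker--Campbell--Hausdorff computation matching the semigroup law of $\tAnn_c$ established in the companion paper; the pure-diffeomorphism factors are handled by the group law of $U$.

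I expect the sandwich identity --- equivalently, the coherence of $\pi$ under gluing of annuli --- to be the main obstacle, as it is the one point where the group representation $U$, the bounded contractions $q^{\,\overline{\pi(L_0)}}$, the smoothing property, and the Baker--Campbell--Hausdorff semigroup law of $\tAnn_c$ must all be reconciled exactly, with a careful treatment of the two central extensions thrown in. A secondary difficulty is foundational: checking that the factorization $A=g_1q^{L_0}g_2$ depends smoothly (and holomorphically in the appropriate directions) on $A$, that the series-defined $\pi(\exp X)$ really coincides with $U(g_1)q^{\,\overline{\pi(L_0)}}U(g_2)$ and has the expected derivative, and that all the compositions of smoothing operators and densely-defined unbounded operators involved are everywhere defined on dense domains.
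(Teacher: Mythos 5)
Your route is not the paper's: you propose the classical Segal--Neretin strategy of uniformizing a thick annulus as $A=g_1\,q^{\ell_0}\,g_2$ and setting $\pi(A)=U(g_1)\,q^{L_0}\,U(g_2)$ using the integrated projective unitary representation of $\Diff(S^1)$, whereas the paper never factors annuli this way. It instead chooses a framing of $A$, reads off a path $X(t)=-h_t/h_\theta$ of inward-pointing (possibly tangential) vector fields, and defines $\pi(A)$ directly as a time-ordered exponential, i.e.\ the Pazy evolution system generated by the $\pi(X(t))$; the Fewster--Hollands quantum energy inequality supplies the semigroup generation and norm bounds, the Goodman--Wallach energy bounds supply the remaining hypotheses, multiplicativity is then automatic from concatenation of framings and the evolution-system law $U(t,r)U(r,s)=U(t,s)$, and the only coherence issue --- independence of the chosen framing --- is settled by a cocycle computation that reproduces exactly the defining relation of $\tAnn_c$.

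There are two genuine gaps in your proposal, and they sit exactly at the points you flag as ``obstacles''. First, the sandwich identity $q^{\overline{\pi(L_0)}}\,U(\psi)\,p^{\overline{\pi(L_0)}}=U(g')\,r^{\overline{\pi(L_0)}}\,U(h')$ is the entire analytic content of the theorem, and it cannot be obtained by a Baker--Campbell--Hausdorff computation: the operators $\pi(X)$ are unbounded, the relevant Lie algebra is not Banach, BCH does not converge, and the data $(g',r,h')$ depend on $(q,\psi,p)$ through a uniformizing map, i.e.\ transcendentally, so there is no convergent algebraic identity to ``match''. This is precisely the step at which Neretin's announced version of the result remained without a complete proof, as the paper points out. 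Second, the semigroup $\tAnn_c$ considered here contains partially thin annuli, whose boundary circles touch; these admit no factorization $g_1 q^{\ell_0} g_2$ with $q<1$, and your plan to reach them ``by continuity'' from proper annuli assumes what has to be proven: a uniform norm bound only yields weakly convergent subnets, not a well-defined limit independent of the approximating family, let alone one that is multiplicative, compatible with the central extension, SOT-continuous and holomorphic. Establishing continuity of the representation up to that boundary stratum is essentially as hard as the original problem; the paper's machinery (generators that are merely inward pointing, with the bound $\pi(X)\le\mu_X$) exists precisely to treat these elements directly rather than by limits. Secondary, but also unaddressed: the identification of your power-series exponential on $W$ with $U(g_1)q^{L_0}U(g_2)$, and the holomorphic dependence of the factorization $(g_1,q,g_2)$ on $A$, are themselves nontrivial claims that your sketch does not substantiate.
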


\begin{thmalpha}\label{thm: intro thm 2}
    Every representation $\cH$ of the Virasoro conformal net induces a holomorphic $*$-representation of $\tAnn_c$.
\end{thmalpha}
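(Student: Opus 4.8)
The plan is to reduce Theorem~\ref{thm: intro thm 2} to Theorem~\ref{thm: intro thm 1}: the point is to equip an arbitrary representation $\cH$ of the Virasoro conformal net with a positive energy action of $\Vir_c$ and then transport the conclusion of Theorem~\ref{thm: intro thm 1} through a direct-integral decomposition. Concretely, for $\cH$ on a separable Hilbert space one first writes $\cH\cong\int^\oplus \cH_h\,d\mu(h)$ as a direct integral of irreducible representations of the net; by the classification of the representations of the Virasoro net, together with the fact --- due to Weiner and others --- that such representations are automatically diffeomorphism covariant with positive energy, each fibre $\cH_h$ is the Hilbert space completion of a unitary lowest weight module $L(c,h)$ of $\Vir_c$, and in particular is a positive energy representation of $\Vir_c$ in the sense required by Theorem~\ref{thm: intro thm 1}.

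I would then apply Theorem~\ref{thm: intro thm 1} fibrewise to obtain holomorphic $*$-representations $\pi_h$ of $\tAnn_c$ by bounded operators on each $\cH_h$, and reassemble them by setting $\pi(A):=\int^\oplus \pi_h(A)\,d\mu(h)$ for $A\in\tAnn_c$. Two things must be verified. First, $h\mapsto\pi_h(A)$ should be a measurable field of operators; this is immediate from the fact that the construction in Theorem~\ref{thm: intro thm 1} is canonical in the input representation (it involves no choices). Second, and more seriously, one needs a uniform bound $\sup_h\|\pi_h(A)\|<\infty$ for each fixed $A$: for the ``standard'' annulus of modulus $q\in(0,1)$ the operator is $q^{L_0-c/24}$, of norm $q^{h-c/24}$, which is bounded (indeed tends to $0$) as $h\to\infty$, and a general $A\in\Ann$ is handled by conjugating with the outer reparametrisation and invoking the corresponding estimates from \cite{HenriquesTener24ax}. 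Granting the uniform bound, $\pi(A)$ is a well-defined bounded operator; it depends holomorphically on $A$ because its matrix coefficients are integrals, with a uniform majorant, of the holomorphic matrix coefficients of the $\pi_h$, and multiplicativity together with the $*$-property pass to the direct integral pointwise.

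Finally I would check that the resulting $\tAnn_c$-action does not depend on the chosen direct-integral decomposition. This comes down to the naturality of the construction of Theorem~\ref{thm: intro thm 1} --- equivalently, to the uniqueness of the holomorphic $*$-extension of a positive energy $\Vir_c$-representation to $\tAnn_c$ --- which is essentially built into the companion paper's description of $\tAnn_c$ as the exponential of the positive cone of the Virasoro Fr\'echet Lie algebra: once the Lie-algebra action and the Fr\'echet topology are fixed, the action of $\tAnn_c$ is forced, so any $\Vir_c$-intertwiner is automatically a $\tAnn_c$-intertwiner. I expect the two genuinely delicate ingredients to be (i)~the conformal-net input --- automatic diffeomorphism covariance and the sector classification --- that makes the Virasoro-algebra action available on an arbitrary net representation in the first place, and (ii)~the uniform-in-$h$ operator-norm bound, without which the direct integral $\int^\oplus\pi_h(A)\,d\mu(h)$ need not define a bounded (let alone holomorphic) representation of $\tAnn_c$ on all of $\cH$.
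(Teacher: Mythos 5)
Your overall skeleton (decompose the net representation as a direct integral of irreducible sectors, identify each fibre with some $\cH_{c,h}$ via the classification of Virasoro sectors, apply Theorem~\ref{thm: intro thm 1} fibrewise, and set $\pi(A)=\int^\oplus \pi_h(A)\,d\mu(h)$) is indeed the skeleton of the paper's proof in Section~\ref{sec:7}. But the three points you flag as needing verification are exactly where your justifications do not hold up, and where the paper takes a different route. (i) \emph{Measurability:} ``the construction is canonical, hence the field $h\mapsto\pi_h(A)$ is measurable'' is not an argument. The Virasoro action on a fibre $\cH_s$ is obtained by choosing a unitary $\cH_s\cong\cH_{c,h(s)}$, and for $c\ge 1$ the sectors form a continuum, so one must know that these identifications (or at least the resulting operator field) can be chosen measurably in $s$; canonicity of Theorem~\ref{thm: intro thm 1} per fibre says nothing about this. (ii) \emph{Uniform boundedness:} the bound $\sup_h\|\pi_h(A)\|<\infty$ is true, but not for the reason you give. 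A general annulus is not controlled by ``conjugating with the outer reparametrisation'' --- the reparametrising diffeomorphisms act by operators whose norms you would then also need to control sector-by-sector, and the estimates of \cite{HenriquesTener24ax} are geometric statements about the semigroup, not operator-norm statements in representations. The correct source of uniformity is the Fewster--Hollands bound \eqref{eqn: QEI}--\eqref{eqn: bound of mu} feeding into \eqref{eqn: norm bound in terms of sobolev}: the constant there depends only on $c$ and the Sobolev norm of the generating path, not on $h$. (iii) \emph{Continuity:} in this paper ``holomorphic'' means G\^ateaux holomorphic \emph{and} continuous for the SOT; your matrix-coefficient/dominated-convergence argument addresses only (weak) G\^ateaux holomorphy and you never address continuity of the assembled $\pi_\cH$ on $\tAnn_c$. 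Likewise the independence of the chosen decomposition is asserted (``any $\Vir_c$-intertwiner is automatically a $\tAnn_c$-intertwiner'') rather than proved.

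It is worth seeing how the paper sidesteps all three issues at once, because it is the genuinely different ingredient your proposal is missing: annuli are factored, locally and holomorphically, into bigons (Lemma~\ref{lem: factorizaton of annuli into bigons}), bigons land in the local algebras, $\pi_{c,0}(\Bigon_c(I))\subset\cA_c(I)$ (Lemma~\ref{lem: bigons inside local algebra}, via Haag duality), and Weiner's local equivalence gives $\rho_I\circ\pi_{c,0}=\pi_{c,h}$ on $\Bigon_c(I)$ (Lemma~\ref{lem: compatibility with irreducible net representations}). Consequently, for a bigon $A$ the field $s\mapsto\pi_s(A)$ equals the decomposition of the single bounded operator $\rho_I(\pi_{c,0}(A))$, which yields measurability and essential boundedness for free, and SOT-continuity and holomorphy of $\pi_\cH$ then follow from the factorization together with the facts that $\rho_I$, being a normal representation of a von Neumann algebra, is norm-non-increasing, norm holomorphic as a bounded linear map, and SOT--SOT continuous on bounded sets. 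Your route could probably be repaired (replace the uniform-bound argument by Remark~\ref{rem: explicit norm estimates}, prove measurable selection of the fibre identifications, and supply a continuity argument), but as written the proposal has genuine gaps precisely at the steps the bigon-factorization mechanism is designed to handle, and it also never establishes the compatibility $\pi_\cH|_{\Bigon_c(I)}=\rho_I\circ\pi_{c,0}|_{\Bigon_c(I)}$ that the paper uses both to define the representation and to pin it down uniquely.
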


Theorem~\ref{thm: intro thm 1} is proven in the body of the aritlce as Theorem~\ref{thm: representation exists} (existence) and Theorem~\ref{thm: representation is holomorphic} (holomorphicity).
The formula defining the representation is given in \eqref{eq: Definition of the action}.
A weaker version of Theorem~\ref{thm: intro thm 1} was first announced by Neretin in~\cite{Neretin90}.
Theorem~\ref{thm: intro thm 2} is proven in Section~\ref{sec:7}, and its main content is to extend the result of Theorem~\ref{thm: intro thm 1} from direct sums of irreducible representations of $\Vir_c$ to direct integrals.

\subsection*{Acknowledgements}

The second author was supported by ARC Discovery Project DP200100067.
For the purpose of Open Access, the authors have applied a CC BY public
copyright licence to any Author Accepted Manuscript (AAM) version arising from this
submission.

\section{The semigroup of annuli} \label{sec: semigroup of annuli}

The semigroup of annuli was introduced by G.~Segal and Y.~Neretin \cite{SegalDef,Neretin90}.
One of their great insights is that this semigroup,
even though it is not a group, is a kind of complexification of the group $\Diff(S^1)$ of orientation preserving diffeomorphisms of~$S^1$.
In our recent paper \cite{HenriquesTener24ax} we introduced a variation of this semigroup, which allows the annuli to have their two boundary circles intersect -- this is the semigroup we shall be further studying in the present paper.

Let us use the notation $\gamma:S^1\hookplus \bbC$
to indicate that $\gamma$ is a smooth embedding with winding number $+1$.
For such an embedding $\gamma$,  
we denote by $\mathrm{Int}(\gamma)\subset \CC$ the open disc bounded by the image of $\gamma$.
An \emph{embedded annulus}
is a subset of $\CC$ of the form
\[
A\,=\,\overline{\mathrm{Int}(\gamma_{out})}\setminus \mathrm{Int}(\gamma_{in})\subset \CC,
\]
where $\gamma_{in}, \gamma_{out}:S^1\hookplus \CC$ are embeddings satisfying $\mathrm{Int}(\gamma_{in}) \subseteq \mathrm{Int}(\gamma_{out})$.
Let us write $\varphi_{in/out}:S^1\to A$ for the maps $\gamma_{in/out}$ when thought of as maps $S^1\to A$.

\noindent
Note that we allow the images of $\gamma_{in}$ and $\gamma_{out}$ to have non-trivial intersection.
Here is\\[-4mm]
\begin{equation}\label{eq: pic of bigon}
\hspace{-.7cm}\text{an example of what an annulus may look like:} \hspace{-.4cm}
\tikz[baseline=5, yscale=.7, xscale=-1]{
\draw[thick, blue, fill=gray!25] (0,0) circle (1.3);
\draw[thick, red, fill=white] (-50:1.273) arc (-50:50:1.273) to[in=-140, out=140, looseness=1.5] (-50:1.273);
\node at (-.65,-.2) {$A$};
\node[red, scale=.9] at (.6,-.35) {$\scriptstyle \partial_{in}A$};
\node[blue, scale=.9] at (-1.3,-1.2) {$\scriptstyle \partial_{out}A$};
\node[blue, scale=.9] (Aleft) at (-2.5,.5) {$S^1$}; \draw[blue, left hook->] (Aleft.east)+(.6,-.05) --node[blue, above, scale=.9]{$\scriptstyle \varphi_{out}$} +(1.3,-.05);
\node[red, scale=.9] (Aright) at (2.5,-.5) {$S^1$}; \draw[red, right hook->] (Aright.west)+(-.6,-.05) --node[red, above, scale=.9]{$\scriptstyle \varphi_{in}$} +(-1.3,-.05);
}
\end{equation}

\noindent
We equip $A$ with the sheaf $\cO_A$ of functions that are continuous on $A$, holomorphic on $\mathring A$, and smooth on $\partial_{in}A$ and on $\partial_{out}A$. Namely, for a (relatively) open subset $U\subset A$, we set
\begin{equation}\label{eq: def of O_A}
\cO_A(U) := \left\{
\parbox{9cm}
{continuous functions $U\to \bbC$ that are holomorphic on $\mathring A\cap U$, smooth on $\partial_{in}A\cap U$, and smooth on $\partial_{out}A\cap U$}
\right\}
\end{equation}

\noindent
An \emph{abstract annulus} is a triple $(A, \varphi_{in},\varphi_{out})$ where $A$ is a locally ringed space isomorphic to an embedded annulus, and $\varphi_{in/out}:S^1\hookrightarrow A$ are the  two boundary inclusions.
An annulus $A$ is called \emph{thick} if $\partial_{in}A\cap\partial_{out}A=\emptyset$, in which case it is diffeomorphic to $S^1\times [0,1]$.

\begin{defn}
The \emph{semigroup of annuli} $\Ann$ is the set of isomorphism classes of abstract annuli.
The composition operation
\[
\cup:\Ann\times \Ann\to \Ann
\]
is defined as a pushout in the category of (locally) ringed spaces, and is well defined by \cite[Prop.~3.5]{HenriquesTener24ax}.
We equip $\Ann$ equip with the quotient topology of the space of pairs
\(
\{\gamma_{in/out}:S^1\hookplus \bbC \,|\, \mathrm{Int}(\gamma_{in}) \subseteq \mathrm{Int}(\gamma_{out})\},
\)
where the latter is topologised as a subset of $C^\infty(S^1,\bbC)^{\oplus 2}$.
\end{defn}

The semigroup of annuli is equipped with an involution
\begin{equation}
\label{eq: dag}
\dagger: \Ann \to \Ann
\end{equation}
which sends an annulus $A$ to the same annulus equipped with the opposite complex structure: $f\in \cO_A(U)\Leftrightarrow \bar f\in \cO_{A^\dagger}(U)$. The incoming and outgoing circles are exchanged, but their parametrizations remain the same.
The group $\Diff(S^1)$ admits an obvious embedding
\begin{equation}\label{eq: annulus from diffeo}
\Diff(S^1)\hookrightarrow \Ann
\end{equation}
which sends a diffeomorphism $\psi$ to the completely thin annulus $A_\psi:=(A{=}S^1,\varphi_{in}{=}\psi,$ $\varphi_{out}{=}\id)$.
This inclusion intertwines the involution $g\mapsto g^{-1}$ on $\Diff(S^1)$ with the involution $\dagger$ on $\Ann$.

We equip $\Ann$ with the complex diffeology which declares a map $M\to \Ann$ from a finite dimensional complex manifold $M$ to be holomorphic if it admits local lifts to $C^\infty(S^1,\bbC)^{\oplus 2}$ which are holomorphic.
By \cite[Lem~2.8]{HenriquesTener24ax}, this is equivalent to the existence of a global lift $M\to C^\infty(S^1,\bbC)^{\oplus 2}$ which is holomorphic.
A holomorphic map $M\to \Ann$ is also called a \emph{holomorphic family of annuli} parametrised by $M$ (see \cite[Def~2.6]{HenriquesTener24ax} for an equivalent description of holomorphic families of annuli).

In \cite[Cor~3.9]{HenriquesTener24ax}, we established that $\Ann$ is a semigroup in the category of complex diffeological spaces, meaning that if $M$ is a finite dimensional complex manifold and $f_1,f_2:M\to \Ann$ are holomorphic maps, then $\cup\circ(f_1\times f_2):M\to \Ann$ is also holomorphic.
All the operations performed in \cite[\S3.2]{HenriquesTener24ax} are continuous with respect to the $C^\infty$ topology\footnote{Those operations are: (1) solving the Riemann mapping problem for simply connected domains with smooth boundary, and (2) taking the inverse of an operator of the form identity plus smoothing operator.}, so $\Ann$ is also a semigroup in the category of topological spaces.

\begin{defn}
A \emph{framing} of an annulus $A\in \Ann$ is a smooth surjective map \begin{align*}
h\,:\,S^1\times[0,t_0]\longrightarrow\,\, &A
\\
(\theta,t)\,\,\,\mapsto\,\, h&(\theta,t)
\end{align*}
such that $h|_{S^1\times\{t\}}$ is an embedding for all $t\in [0,t_0]$
(in particular $\tfrac{\partial h}{\partial\theta} \neq 0$), whose Jacobian determinant is non-negative, and that is compatible with the boundary parametrizations in the sense that $h|_{S^1\times\{0\}}=\varphi_{in}$ and $h|_{S^1\times\{t_0\}}=\varphi_{out}$.

If $f=(f_{in},f_{out}):M\to C^\infty(S^1,\bbC)^{\oplus 2}$ represents a holomorphic family of annuli, then 
a \emph{holomorphic family of framings} is a map
$\underline{h}:M\times S^1\times [0,t_0]\to \CC$
whose associated map
$M\to C^\infty(S^1\times [0,t_0], \CC)$
is holomorphic, and
that satisfies
$\underline{h}|_{M\times S^1\times\{0\}}=f_{in}$,
$\underline{h}|_{M\times S^1\times\{t_0\}}=f_{out}$,
and each 
$\underline{h}|_{\{m\}\times S^1\times [0,t_0]}$ is a framing.

A framing has \emph{sitting instants} if there exists $\varepsilon>0$ such that for all $\theta\in S^1$ the functions
$h|_{\{\theta\}\times [0,\varepsilon]}$ and 
$h|_{\{\theta\}\times [t_0-\varepsilon,t_0]}$ are constant.
\end{defn}

Note that if $h:S^1\times[0,t_0]\to A$ is a framing,
the condition that the Jacobian determinant of $h$ is non-negative implies that for each subinterval $[t', t'']\subset[0,t_0]$, the subset $h(S^1\times[t',t''])\subset A$ is again an annulus, and the map
$(\theta,t)\mapsto h(\theta,t-t')$ is a framing of that annulus.

The semigroup of annuli admits a central extension \cite[\S5]{HenriquesTener24ax}
\[
\bbC \times \bbZ \to \tAnn \to \Ann
\]
which we conjecture is universal.
The central $\bbZ$ is generated by a $2\pi$-rotation, and corresponds to the fact that $\Ann$ is homotopy equivalent to $S^1$.
Fix $c\in \bbC$.
Taking the pushout along the map $\bbC \times \bbZ\to \bbC^\times \times \bbZ:(z,n)\mapsto (e^{cz},n)$,
we get a new central extension (no longer universal)
\begin{equation}\label{eq: The central extension}
\bbC^\times \times \bbZ \to \tAnn_c \to \Ann
\end{equation}
which we now describe.

\begin{defn}\label{defn: tAnnc}
The semigroup $\tAnn_c$ \pounds{50}
is the set of equivalence classes of triples $(A,h,z)$, where $A$ is an annulus, $h:S^1 \times [0,t_0] \to A$ is a framing of $A$, and $z \in \bbC^\times$, with respect to the equivalence relation that allows from rescalings of $[0,t_0]$,
and
declares $(A,h,z) \sim (A',h',z')$ if $A = A'$ in $\Ann$ and for every smooth one-parameter family of framings $\{h(\theta,t;u)\}_{u\in [0,1]}$ interpolating between $h$ and $h'$ we have 
\begin{equation}\label{eq: construction of central extension of Ann(S)}
z'\;\!z^{-1}
\,=\,\exp
\bigg[\int\!\!\!\!\int\! \omega\Big(h_t/h_\theta,h_u/h_\theta\Big) \,dtdu\bigg]
\end{equation}
where $\omega=\omega_{Vir}$.
Here, the ratio ${h_t}/{h_\theta}$
(respectively ${h_u}/{h_\theta}$)
denotes the preimage of $h_t:=\partial h/\partial t\in TA$ (respectively $\partial h/\partial u$) under the $\C$-linear extension $T_\C S^1\to TA$ of $\partial h/\partial \theta:T S^1\to TA$.

The semigroup operation is given by concatenation of framings
\[
(A,h,z)(A',h',z'):=(A\cup A',h\cup h',zz'),
\]
where $h\cup h':S^1\times [0,t_0+t_0']\to A\cup A'$ sends $(\theta,t)$ to $h(\theta,t)$ for $t\in [0,t_0']$, and $h'(\theta,t-t_0')$ for $t\in [t_0',t_0+t_0']$.
Note that the framing $h\cup h'$ of $A\cup A'$ is not always smooth, but that it is always possible to replace $h$ and $h'$ by homotopic framings for which the concatenation is smooth; this yields a well-defined semigroup operation at the level of equivalence classes.

Finally, the dagger operation sends an element $(A,h,z)\in \tAnn_c$ to $(A^\dagger,h^\dagger,\bar z)$, where $h^\dagger(\theta,t):=h(\theta,1-t)$.
\end{defn}

An element $[(A,h,z)]\in \tAnn_c$ is also written
\[
z\cdot\prod_{1\ge t\ge 0} \mathrm{Exp}(X(t)) dt,
\]
where $X(t) = (-h_t/h_\theta)|_{S^1\times \{t\}}$.
The dagger operation then reads
\begin{equation}\label{eqn: dagger of exponential of path}
\Bigg(z\cdot\prod_{1\ge t\ge 0} \mathrm{Exp}(X(t)) dt\Bigg)^\dagger
=
\bar z\cdot\prod_{1\ge t\ge 0} \mathrm{Exp}(-\overline{X(1-t)}) dt.
\end{equation}

\begin{rem}
The central extension $\tAnn_c$ described above is equivalent to the one which appears in \cite[Proposition 5.4]{HenriquesTener24ax}.
Following \cite[\S5]{HenriquesTener24ax}, let $\Ann^{\le A} = \{A_1 \in \Ann \,|\,  \exists A_2 : A_1 \cup A_2 = A\}$.
The space
$\tAnn_c$ is defined in loc.~cit.~as equivalence classes of triples $(A,\gamma,z)$ with $(A,\gamma,z)\sim (A,\gamma',z\cdot\exp(-\int_k\underline{\omega})\big)$, where $\gamma:[0,1]\to \Ann^{\le A}$ is a path from $1$ to $A$,
$k$ is a homotopy from $\gamma$ to $\gamma'$, and $\underline{\omega}$ is the left invariant $2$-form associated to the Virasosro cocycle (\cite[Definition~5.3]{HenriquesTener24ax}).

A framing $h:S^1\times[0,1]\to A$ produces a path $\gamma:[0,1]\to \Ann^{\le A}$ by 
sending $t$ to the annulus which is the image of $S^1\times[1-t,1]$ under $h$ (with the boundary parametrizations induced by $h$).
Similarly, a one-parameter family of framings $h(\theta,t;u)$ produces a homotopy 
$k:[0,1]^2\to \Ann^{\le A}$
between such paths.
All that remains to do is to identify $-\int_k\underline{\omega}=-\int\!\!\!\int k^*\underline{\omega}$ with $\int\!\!\!\int\! \omega\big(h_t/h_\theta,h_u/h_\theta\big) \,dtdu$.
We need to show that
\begin{equation}\label{eq: two omega formulas}
-k^*\underline{\omega} =
\omega\big(h_t/h_\theta,h_u/h_\theta\big) \,dtdu.
\end{equation}
Indeed, the value of $-k^*\underline{\omega}$ at a point $(t,u)$ is given by
$\omega(\ell^{-1}(-\tfrac{\partial k}{\partial t}(t,u),\tfrac{\partial k}{\partial u}(t,u)))dtdu$, where $\ell:\cX(S^1)\to T_{k(t,u)}\Ann^{\le A}$ is the isomorphism given by left translation. This
agrees with the right hand side of \eqref{eq: two omega formulas} (and the minus sign comes from the $1-t$ in the formula relating framings of an annulus $A$, and paths in $\Ann^{\le A}$).
\end{rem}

\pounds{51}

The semigroup $\tAnn_c$ admits a $C^\infty$ topology as well as a complex diffeology, making \eqref{eq: The central extension} into both a central extension of topological semigroups and a central extension of complex diffeological semigroups.
The dagger operation is antiholomorphic.
The local triviality of the projection map $\tAnn_c\to \Ann$ with respect to the $C^\infty$ topology is the content of 
\cite[Proposition 5.5]{HenriquesTener24ax},
and the local triviality of that same projection map in the sense of complex diffeologies is the content of 
\cite[Theorem 5.13]{HenriquesTener24ax}.

\section{Time-ordered exponentials of operators} \label{sec: time ordered exponentials}
In this section we outline some aspects of the general theory of evolution systems, following \cite[Ch. 5]{Pazy}.
These tools will be used in Section~\ref{sec: estimates for Virasoro} to construct representations of $\tAnn$.

We first review the special case of semigroups of operators (see \cite[Ch. 1]{Pazy}).
Let $\bbX$ be a Banach space, and let $T:[0,\infty) \to \cB(\bbX)$ be a function taking values in bounded operators on $\bbX$.
We say that $T$ is a semigroup of operators if $T(0) = 1$ and $T(s+t) = T(s)T(t)$ for all $s,t \ge 0$, and we say that $T$ is a $C_0$-semigroup if additionally it is strongly continuous, i.e. $\lim_{t \searrow 0} T(t)x = x$ for all $x \in \bbX$.
The generator of a $C_0$-semigroup $T$ is the densely defined operator $A$ given by 
\[
Ax = \lim_{t \searrow 0} \frac{A(t)x-x}{t}
\]
with domain $D(A)$ consisting of all vectors $x \in \bbX$ such that the defining limit exists (in the norm topology of $X$).
The generator of a $C_0$-semigroup is necessarily closed (in the sense of having a closed graph) and densely defined.
The following is a consequence of the Lumer-Phillips theorem \cite{LumerPhillips} that we will need in Section~\ref{sec: estimates for Virasoro}.
The results are standard, but a proof is included for the convenience of the reader.

\begin{lem}\label{lem: lumer phillips}
Let $\cH$ be a Hilbert space, and let $A$ be a closed densely defined operator on $\cH$ with domain $D(A)$. 
Suppose there exists a constant $\omega \ge 0$ such that 
\begin{align}
\operatorname{Re} \ip{Ax,x} \le \omega \norm{x}^2 &\qquad\,\,\, \forall\, x \in D(A), \nonumber \\
\operatorname{Re} \ip{A^*x,x} \le \omega \norm{x}^2 &\qquad\,\,\, \forall\,x \in D(A^*). \label{eqn: dissipative}
\end{align}
Then $A$ is the generator of a $C_0$-semigroup $T:[0,\infty) \to \cB(\cH)$ of bounded operators satisfying $\norm{T(t)} \le e^{\omega t}$, and similarly for $A^*$.
Moreover, the resolvent sets $\rho(A)$ and $\rho(A^*)$ contain the open interval $(\omega,\infty)$.
\end{lem}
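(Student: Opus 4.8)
The plan is to invoke the Lumer–Phillips theorem, whose hypothesis is precisely that an operator $A$ be densely defined, closed, dissipative (meaning $\operatorname{Re}\ip{Ax,x}\le 0$ for all $x\in D(A)$), and that $A-\lambda$ be surjective for some (equivalently all) $\lambda>0$; the conclusion is that $A$ generates a $C_0$-semigroup of contractions. To reduce to this setting, first I would replace $A$ by $B:=A-\omega$. The hypotheses \eqref{eqn: dissipative} say exactly that $\operatorname{Re}\ip{Bx,x}\le 0$ for $x\in D(B)=D(A)$ and $\operatorname{Re}\ip{B^*x,x}\le 0$ for $x\in D(B^*)=D(A^*)$ (using $B^*=A^*-\omega$). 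So $B$ and $B^*$ are both dissipative, and $B$ is closed and densely defined since $A$ is.

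The remaining ingredient for Lumer–Phillips is the range condition: I must show $B-\lambda=A-(\omega+\lambda)$ is surjective for some $\lambda>0$, equivalently that some point $\mu>\omega$ lies in the resolvent set $\rho(A)$. Here I would use the standard fact that a dissipative operator $B$ satisfies $\norm{(B-\lambda)x}\ge \lambda\norm{x}$ for $\lambda>0$ (from $\operatorname{Re}\ip{(B-\lambda)x,x}=\operatorname{Re}\ip{Bx,x}-\lambda\norm{x}^2\le-\lambda\norm{x}^2$ and Cauchy–Schwarz), so $B-\lambda$ is injective with closed range. To see the range is everything: if $y\perp\operatorname{ran}(B-\lambda)$, then for all $x\in D(B)$ we have $\ip{(B-\lambda)x,y}=0$, i.e. $y\in D(B^*)$ and $(B^*-\lambda)y=0$; but dissipativity of $B^*$ gives $\norm{(B^*-\lambda)y}\ge\lambda\norm{y}$, forcing $y=0$. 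Hence $B-\lambda$ is bijective for every $\lambda>0$, so $(\omega,\infty)\subset\rho(A)$, and symmetrically (since the hypotheses are symmetric in $A$ and $A^*$, and $A^{**}=A$ as $A$ is closed) $(\omega,\infty)\subset\rho(A^*)$.

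Now Lumer–Phillips applies to $B$: it generates a $C_0$-semigroup $S(t)$ of contractions, $\norm{S(t)}\le 1$. Setting $T(t):=e^{\omega t}S(t)$ gives a $C_0$-semigroup with generator $B+\omega=A$ and $\norm{T(t)}\le e^{\omega t}$. Applying the same argument to $A^*$ — which is closed (adjoints are always closed), densely defined (because $A$ is closed and densely defined on a Hilbert space, so $A^*$ is densely defined), and satisfies the two dissipativity bounds with the roles of $A,A^*$ swapped using $A^{**}=A$ — yields the corresponding semigroup generated by $A^*$. The only genuinely delicate point is the surjectivity/range argument, and in particular making sure the orthogonal-complement computation correctly identifies $\Ker(B^*-\lambda)$; everything else is bookkeeping and citing \cite{LumerPhillips,Pazy}. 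I would present the range argument in full and merely cite the passage from the contraction-semigroup conclusion of Lumer–Phillips back to the statement, perhaps recording the contraction estimate $\norm{(A-\mu)^{-1}}\le(\mu-\omega)^{-1}$ for $\mu>\omega$ along the way since it is what the estimates in Section~\ref{sec: estimates for Virasoro} will actually use.
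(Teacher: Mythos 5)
Your proposal is correct and follows essentially the same route as the paper: shift to $B=A-\omega$, use the two-sided dissipativity hypothesis to verify the Lumer--Phillips conditions, and rescale by $e^{\omega t}$ to obtain $\norm{T(t)}\le e^{\omega t}$, with the symmetric statement for $A^*$ via $A^{**}=A$. The only difference is presentational: where the paper simply cites \cite[Cor.~1.4.4]{Pazy} for the generation statement and obtains the resolvent inclusion $(\omega,\infty)\subset\rho(A)$ via surjectivity from \cite[Thm.~1.4.3(a)]{Pazy} plus an adjoint/closed-graph argument, you inline the Hilbert-space range argument (lower bound $\norm{(B-\lambda)x}\ge\lambda\norm{x}$ plus triviality of $\operatorname{ran}(B-\lambda)^\perp=\Ker(B^*-\lambda)$), which yields the same conclusions, including the bound $\norm{(A-\mu)^{-1}}\le(\mu-\omega)^{-1}$.
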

\begin{proof}
Assumption \eqref{eqn: dissipative} says that $A-\omega$ and $A^* - \omega$ are dissipative operators in the sense of \cite[Def. 1.4.1]{Pazy}\footnote{Since $\cH$ is a Hilbert space, the duality set $F(x)$ from loc. cit. is given by $F(x)=\{x\}$ by the Cauchy-Schwarz inequality.}.
By \cite[Cor. 1.4.4]{Pazy} $A-\omega$ is the generator of a $C_0$-semigroup $\tilde T$ of contractions (note that the symbol $A^*$ in the cited Corollary refers to the Banach space adjoint operator, but the proof goes through the same with the Hilbert space adjoint operator used here).
Hence $A$ is the generator of the semigroup $T(t) = e^{\omega t} \tilde T(t)$ which satisfies $\norm{T(t)} \le e^{\omega t}$ as required.

Finally, to address the `moreover' suppose that $\omega'>\omega$.
By the Lumer-Phillips theorem \cite[Thm. 1.4.3(a)]{Pazy}, the image of $D(A)=D(A-\omega')$ under $A-\omega'$ is all of $\cH$, and similarly for $A^*-\omega'$.
Taking adjoints, it follows that $A-\omega'$ (and $A^*-\omega'$) are injective, and thus these operators map their domains bijectively onto $\cH$.
The inverse operators $(A-\omega')^{-1}$ and $(A^*-\omega')^{-1}$ are thus closed and defined on all of $\cH$, and therefore bounded.
We conclude that $\omega' \in \rho(A)$ and similarly for $A^*$.
\end{proof}

The story of operator semigroups is tied to the ODE $T'(t) = AT(t)$, whereas our treatment of the semigroup of annuli relies on the corresponding time-dependent problem $T'(t) = A(t)T(t)$.
In this case, the natural algebraic objects to replace semigroups are \emph{evolution systems} (see \cite[\S5.1]{Pazy}):
\begin{defn}
Let $\bbX$ be a Banach space, and let $U(s,t)$ be a two-parameter family of bounded operators on $\bbX$ indexed by parameters $0 \le s \le t \le T$. Then $U(t,s)$ is called an evolution system (on $\bbX$) if
\begin{enumerate}
\item $U(s,s)=1$ for $0 \le s \le T$
\item $U(t,r)U(r,s) = U(t,s)$ for $0 \le s \le r \le t \le T$
\item For every $x \in \bbX$, the map $(t,s) \mapsto U(t,s)x$ is continuous for $0 \le s \le t \le T$
\end{enumerate}
\end{defn}
If $T(t)$ is a semigroup, then $U(t,s):=T(t-s)$ is an evolution system, but most evolution systems do not arise in this way.
Informally, an evolution system is generated by a family $A(t)$ of unbounded operators if $U(t,s)x$ solves the following initial value problem for $t \ge s$:
\[
u'(t)=A(t)u(t), \qquad u(s) = x.
\]

We rely on the following generation theorem for evolution systems.
\begin{thm}[{\cite[Thm. 5.4.6]{Pazy}}] \label{thm: Pazy}
    Let $\bbX$ be a Banach space, and for $t \in [0,T]$ let $A(t)$ be the generator of a $C_0$-semigroup of operator $T_t(s)$ on $\bbX$.
    Let $\bbY$ be another Banach space that is continuously and densely embedded in $\bbX$, and suppose that the following hold:%
    \footnote{For convenience of comparison, we adopt the same notation $(H_1)$, $(H_2^+)$, $(H_3)$ as \cite{Pazy}. The letter $H$ stands for `hyperbolic', and $(H_2^+)$ is a strengthening of an alternate condition $(H_2)$ considered in loc.~cit. Our version of $(H_1)$ implies the one in loc.~cit.~by \cite[Thm. 5.2.2]{Pazy}.
    Our condition $(E_2)$ is apparently stronger than the one stated in the reference, but follows from the other properties by \cite[Thm. 5.4.3]{Pazy}.}
    \begin{enumerate}
    \item[$(H_1)$] There is a constant $\omega \ge 0$ such that $\norm{T_t(s)} \le e^{\omega s}$ for all $t,s \ge 0$ and the resolvent sets $\rho(A(t))$ contain the interval $(\omega,\infty)$ for all $t$.
    \item[$(H^+_2)$] There is a family of Banach space isomorphisms $Q(t):\bbY \to \bbX$ such that for every $v \in \bbY$ the function $t \mapsto Q(t)v$ is continuously differentiable $[0,T] \to \bbX$ and for all $0 \le t \le T$
    \[
    Q(t)A(t)Q(t)^{-1} = A(t) + B(t)
    \]
    where $B(t)$ is a strongly continuous family of bounded operators on $\bbX$.
    \item[$(H_3)$] For all $0 \le t \le T$ we have $\bbY \subset D(A(t))$, and the restriction $A(t)|_\bbY$ is a bounded operator $\bbY \to \bbX$. Moreover the map $t \mapsto A(t)|_\bbY$ is continuous $[0,T] \to \cB(\bbY,\bbX)$, where $\cB(\bbY,\bbX)$ is given the norm topology.
    \end{enumerate}
    Then there exists an evolution system $U(t,s)$ on $\bbX$ satisfying the following properties.
    \begin{enumerate}
    \item[$(E_1)$] $\norm{U(t,s)} \le e^{\omega(t-s)}$ for $0 \le s \le t \le T$
    \item[$(E_2)$] $\tfrac{\partial}{\partial t}U(t,s)v = A(t)U(t,s)v$ for $v \in \bbY$ and $0 \le s \le t \le T$
    \item[$(E_3)$] $\tfrac{\partial}{\partial s} U(t,s)v = - U(t,s)A(s)v$ for $v \in \bbY$ and $0 \le s \le t \le T$
    \item[$(E_4)$] $U(t,s)\bbY \subset \bbY$ for $0 \le s \le t \le T$
    \item[$(E_5)$] For any fixed $v \in \bbY$, the map $(t,s)\mapsto U(t,s)v: \{0 \le s \le t \le T\}\to \bbY$ is continuous. 
    \end{enumerate}
    The partial derivatives in $(E_2)$ and $(E_3)$ are all taken in the norm topology on $\bbX$, and are interpreted as one-sided derivatives when this is required by the situation.
    Moreover, $U(t,s)$ is the unique evolution system satisfying $(E_1)$-$(E_3)$.
\end{thm}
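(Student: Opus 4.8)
The plan is to reconstruct $U(t,s)$ the way Kato does, as a limit of piecewise-frozen approximants. Choose a sequence of partitions $s = t^{(n)}_0 < \cdots < t^{(n)}_{m_n} = t$ of $[s,t]$ with mesh tending to $0$; on the subinterval $[t^{(n)}_k, t^{(n)}_{k+1})$ replace $A(r)$ by the constant operator $A_n(r) := A(t^{(n)}_k)$, and let $U_n(t,s)$ be the ordered composite of the semigroups $T_{t^{(n)}_k}(t^{(n)}_{k+1}-t^{(n)}_k)$ across the subintervals. Since each factor has norm $\le e^{\omega\cdot(\text{subinterval length})}$ by $(H_1)$, we get $\norm{U_n(t,s)} \le e^{\omega(t-s)}$ for all $n$; in the limit this becomes $(E_1)$. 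Because $(H_3)$ gives $\bbY \subseteq D(A(t^{(n)}_k))$ and $(H_2^+)$ makes $\bbY$ invariant under each frozen semigroup, the map $r \mapsto U_n(t,r)v$ and $r \mapsto U_n(r,s)v$ are continuous and piecewise $C^1$ for $v \in \bbY$, with $\tfrac{\partial}{\partial t}U_n(t,s)v = A_n(t)U_n(t,s)v$ and $\tfrac{\partial}{\partial s}U_n(t,s)v = -U_n(t,s)A_n(s)v$ off the partition points.

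The crux is convergence of $U_n(t,s)v$ for $v \in \bbY$. First, $(H_2^+)$ yields a priori $\bbY$-bounds: since $A(t)+B(t)$ is a bounded perturbation of the family in $(H_1)$, a Dyson-series estimate — together with the differentiability of $t \mapsto Q(t)v$, used to telescope $Q(t^{(n)}_{k+1})Q(t^{(n)}_k)^{-1} = 1 + O(\text{mesh})$ — shows that the conjugated approximants $\widehat U_n(t,s) := Q(t)U_n(t,s)Q(s)^{-1}$ are bounded uniformly in $n$ and in $0 \le s \le t \le T$ on $\bbX$, so that $U_n(t,s)\bbY \subseteq \bbY$ with $\norm{U_n(t,s)v}_\bbY \le C\norm{v}_\bbY$. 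Then, for two partitions $\pi_n,\pi_m$ and $v \in \bbY$, the fundamental theorem of calculus applied to the continuous, piecewise-$C^1$ map $r \mapsto U_n(t,r)U_m(r,s)v$ gives
\[
U_m(t,s)v - U_n(t,s)v = \int_s^t U_n(t,r)\big(A_m(r) - A_n(r)\big)U_m(r,s)v\,dr .
\]
Here $\norm{U_n(t,r)} \le e^{\omega T}$, $\norm{U_m(r,s)v}_\bbY \le C\norm{v}_\bbY$, and by $(H_3)$ the map $r \mapsto A(r)|_\bbY$ is uniformly continuous $[0,T] \to \cB(\bbY,\bbX)$, so $\norm{A_m(r)-A_n(r)}_{\cB(\bbY,\bbX)} \to 0$ uniformly in $r$ as the meshes shrink. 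Hence $(U_n(t,s)v)_n$ is Cauchy in $\bbX$; define $U(t,s)v$ as its limit and extend $U(t,s)$ to $\bbX$ using $\norm{U_n(t,s)} \le e^{\omega(t-s)}$ and density of $\bbY$. The relations $U(s,s)=1$, $U(t,r)U(r,s)=U(t,s)$, the bound $(E_1)$, and — via $\norm{U_n(t,s)v - v} = O(t-s)$ for $v \in \bbY$ plus the uniform bounds — joint strong continuity all pass to the limit, so $U$ is an evolution system.

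For $(E_4)$, $(E_5)$: the same Cauchy estimate applied to $\widehat U_n$ (whose underlying family $A(t)+B(t)$ again satisfies the relevant hypotheses) produces an evolution system $\widehat U(t,s)$ on $\bbX$; passing to the limit in $\widehat U_n = Q(t)U_n(t,s)Q(s)^{-1}$ gives $U(t,s) = Q(t)^{-1}\widehat U(t,s)Q(s)$, whence $U(t,s)\bbY \subseteq \bbY$, and continuity of $\widehat U$ with that of $t \mapsto Q(t)^{\pm 1}$ gives joint continuity into $\bbY$. For $(E_2)$: with $v \in \bbY$, $U(t,s)v \in \bbY$ depends continuously on $(t,s)$ in $\bbY$-norm, and $T_t(h)w = w + hA(t)w + o(h)$ in $\bbX$ uniformly for $w$ in $\bbY$-bounded sets and $t \in [0,T]$ by $(H_3)$; comparing $U(t+h,s)v$ with $T_t(h)U(t,s)v$ (the discrepancy controlled by an integrated estimate of the type above) yields $\tfrac{\partial}{\partial t}U(t,s)v = A(t)U(t,s)v$, and $(E_3)$ follows symmetrically by differentiating in $s$. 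For uniqueness: if $\widetilde U$ satisfies $(E_1)$–$(E_3)$, then for $v \in \bbY$ the map $r \mapsto \widetilde U(t,r)U(r,s)v$ has derivative $-\widetilde U(t,r)A(r)U(r,s)v + \widetilde U(t,r)A(r)U(r,s)v = 0$ (legitimate since $U(r,s)v \in \bbY$), so $\widetilde U(t,s)v = U(t,s)v$, and density together with $(E_1)$ forces $\widetilde U = U$ on $\bbX$.

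The main obstacle is the Cauchy estimate for the approximants in the second step: it is the one place where all three hypotheses are used simultaneously — $(H_1)$ for the uniform $\bbX$-bound on $U_n(t,r)$, $(H_2^+)$ for the uniform $\bbY$-bound on $U_m(r,s)v$, and $(H_3)$ for making $A_m(r)-A_n(r)$ small — and the piecewise-constant nature of the approximants requires care in justifying the fundamental theorem of calculus across the finitely many partition points. The remaining steps are essentially bookkeeping with the uniform bounds, density, and the conjugation by $Q(t)$.
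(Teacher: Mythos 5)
For context: the paper does not prove this statement at all --- it is quoted from Pazy's book (Thm.~5.4.6 there), with a footnote aligning the hypotheses via Pazy's Thms.~5.2.2 and 5.4.3 --- so your proposal is really being measured against the textbook proof it reconstructs. Your outline does follow that standard Kato--Pazy strategy, and most of it is sound: the frozen approximants $U_n$, the bound $\norm{U_n(t,s)}\le e^{\omega(t-s)}$ from $(H_1)$, the uniform $\bbY$-bounds obtained by conjugating each frozen factor by $Q$ and telescoping the transition factors $Q(t_{k+1})Q(t_k)^{-1}=1+O(\text{mesh})$, the Cauchy estimate $U_m(t,s)v-U_n(t,s)v=\int_s^t U_n(t,r)(A_m(r)-A_n(r))U_m(r,s)v\,dr$ driven by $(H_3)$, and the uniqueness argument via $r\mapsto \widetilde U(t,r)U(r,s)v$.

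The genuine gap is in the step that is supposed to deliver $(E_4)$ and $(E_5)$ (on which your $(E_2)$/$(E_3)$ arguments then lean). You assert that $\widehat U_n(t,s)=Q(t)U_n(t,s)Q(s)^{-1}$ converges ``by the same Cauchy estimate'' because ``the underlying family $A(t)+B(t)$ again satisfies the relevant hypotheses.'' But $\widehat U_n$ is \emph{not} the frozen approximant of the family $A(\cdot)+B(\cdot)$: that approximant would conjugate each factor at its own frozen time, $Q(t_k)T_{t_k}(\cdot)Q(t_k)^{-1}$, whereas $\widehat U_n$ conjugates only at the endpoints $t$ and $s$. The transition factors $Q(t_{k+1})Q(t_k)^{-1}=1+O(\text{mesh})$ give uniform boundedness (as you use), but their first-order parts do not disappear in the limit: they accumulate into the extra generator term $\dot Q(t)Q(t)^{-1}$, so the relevant limiting family is $A(t)+B(t)+\dot Q(t)Q(t)^{-1}$, and both the convergence of $\widehat U_n$ and the identification $U(t,s)=Q(t)^{-1}\widehat U(t,s)Q(s)$ require the comparison/integral-equation argument that is the actual technical heart of Pazy's \S5.4 (Thms.~5.4.2, 5.4.3, 5.4.5 feeding into 5.4.6); this is where the $C^1$ hypothesis on $t\mapsto Q(t)v$ is really consumed, not merely in the telescoping bound. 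As written, this step is both unproved and misidentified. A smaller issue: in your $(E_2)$ argument, the expansion $T_t(h)w=w+hA(t)w+o(h)$ ``uniformly for $w$ in $\bbY$-bounded sets'' is unjustified --- the set $\{A(t)w\}$ is only norm-bounded in $\bbX$ and $C_0$-semigroups converge strongly, not uniformly on bounded sets; this one is repairable (compare $U(t+h,t)w$ with $T_t(h)w$ via the same integral identity and use $(H_3)$ pointwise, then upgrade the one-sided derivative using continuity in $\bbY$), but the uniform claim should be dropped.
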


\begin{defn}\label{defn: time ordered exponential}
If $U(t,s)$ is the evolution system corresponding to a family $A(t)$ of generators in the sense of conditions $(E_1)$-$(E_5)$ of Theorem~\ref{thm: Pazy} then we write
\[
U(t,s) = \prod_{t \ge \tau \ge s} \Exp(A(\tau)d\tau),
\]
and we refer to this operator as the time-ordered exponential of the family $A(t)$.
\end{defn}

\begin{rem}\label{rem: explicit construction of evolution system}
    The evolution system $U(t,s)$ of Theorem~\ref{thm: Pazy} is constructed explicitly as a strong limit of a family of evolution systems $U_n(t,s)$ corresponding to piecewise constant approximations to the family $\{A(t)\}_{0 \le t \le T}$.
    These evolution systems $U_n(t,s)$ can be described explicitly.
    Fix a nonnegative integer $n$, and set $t_k = \frac{k}{n}T$ for $k=0,\ldots,n$.
    Let $T_t$ be the semigroup generated by $A(t)$, and for $0 \le s \le t \le T$ we follow \cite[Eq. (5.3.5)]{Pazy} and let
    \[
    U_n(t,s) = 
    \left\{
    \begin{array}{cl}
    T_{t_j}(t-s) & \text{for } t_j \le s \le t \le t_{j+1}\\
    T_{t_k}(t-t_k) \left[\prod_{j=\ell+1}^{k-1} T_{t_j}(\tfrac{T}{n}) \right] T_{t_\ell}(t_{\ell+1}-s) & \text{for } t_k \le t \le t_{k+1}, t_\ell \le s \le t_{\ell+1}\\ & \text{ and } k > \ell.
    \end{array}
    \right.
    \]
    By \cite[Eq. (3.15)]{Pazy} we have $U_n(t,s) \to U(t,s)$ in the strong operator topology.
\end{rem}

Consider now the scenario where we have a triple of densely and continuously embedded Banach spaces:
\[
\bbX_0 \supset \bbX_1 \supset \bbX_2,
\]
and unbounded operators $A(t)$ on $\bbX_0$.
Let $\tilde A(t)$ denote the restriction of $A(t)$ to $\bbX_1$ (also known as the part of $A(t)$ in $\bbX_1$), which is to say the unbounded operator on $\bbX_1$ with domain 
\[
D(\tilde A(t)) := \{v \in D(A(t)) \cap \bbX_1 \, : \, A(t)v \in \bbX_1\}
\]
given by the rule $\tilde A(t)v = A(t)v$.
Suppose the family $A(t)$ satisfies the conditions $(H_1)$, $(H_2^+)$, and $(H_3)$ for the pair of Banach spaces $(\bbX=\bbX_0,\bbY=\bbX_1)$, which produces an evolution system $U(t,s)$ of bounded operators on $\bbX_0$ by Theorem~\ref{thm: Pazy}.
As a consequence of \cite[Lem. 5.4.4]{Pazy}, the restrictions $\tilde A(t)$ generate $C_0$-semigroups $\tilde T_t$ of bounded operators on $\bbX_1$, and in fact $\tilde T_t = T_t|_{\bbX_1}$.
If we additionally assume that the $\tilde A(t)$ satisfy $(H_1)$, $(H_2^+)$, and $(H_3)$ for the pair $(\bbX_1,\bbX_2)$, then we obtain an evolution system $\tilde U(t,s)$ of bounded operators.

\begin{rem}\label{rem: evolution system on subspace}
By Remark~\ref{rem: explicit construction of evolution system}, the evolution systems $U(t,s)$ and $\tilde U(t,s)$ can be obtained as strong limits
\[
U(t,s) = \lim_k T_{x_1}(s_1) \cdots T_{x_k}(s_k), \qquad \tilde U(t,s) = \lim_k \tilde T_{x_1}(s_1) \cdots \tilde T_{x_k}(s_k),
\]
and thus $\tilde U(t,s) = U(t,s)|_{\bbX_1}$.
\end{rem}

Of course, we could perform the same analysis with a chain of Banach spaces
\[
\bbX_0 \supset \bbX_1 \supset \bbX_2 \supset \cdots,
\]
along with unbounded operators $A(t)$ on $\bbX_0$.
We say that the family $A(t)$ satisfies conditions $(H_1)_n$, $(H^+_2)_n$, $(H_3)_n$ if the restriction of $A(t)$ to $\bbX_n$ satisfies the corresponding condition for the pair of Banach space $(\bbX=\bbX_n,\bbY=\bbX_{n+1})$.
If these conditions hold for all $n=0,1,2,\ldots$, then Theorem~\ref{thm: Pazy} yields an evolution system on each space $\bbX_n$, which can all be obtained as the restriction of the evolution system $U(t,s)$ generated on $\bbX_0$.

\begin{lem}\label{lem: HORSE}
\pounds{49}
Let 
$
\bbX_0 \supset \bbX_1 \supset \bbX_2 \supset \bbX_3
$ 
be a chain of continuously and densely embedded Banach spaces, and let $A(p,t)$ be a family of unbounded operators on $\bbX_0$ indexed by $p,t \in [0,1]$. 
For each $p$, suppose that the family $\{A(p,t)\}_{t\in[0,1]}$ satisfies the conditions $(H_1)_n$, $(H_2^+)_n$, $(H_3)_n$ of Theorem~\ref{thm: Pazy} for $n=0,1,2$, and let $U_p$ be the resulting evolution system.
Suppose that the family $A(p,t)$ satisfies the following uniformity conditions:
\begin{enumerate}
    \item[i)] The constant $\omega$ of $(H_1)_n$ may be chosen independent of $p$ for $n=0,1,2$.
    \item[ii)] The assignment $p \mapsto A(p,t)|_{\bbX_1}$ is $C^1$ as a map $[0,1] \to L^\infty([0,1], \cB(\bbX_1,\bbX_0))$.
    \item[iii)] The assignment $p \mapsto A(p,t)|_{\bbX_2}$ is continuous as a map $[0,1] \to L^\infty([0,1], \cB(\bbX_2,\bbX_1))$
\end{enumerate}
The map $p \mapsto U_p(1,0)$ is differentiable $[0,1] \to \cB(\bbX_2,\bbX_0)$ and
\begin{equation}\label{eqn: differentiating family of evolution systems}
\partial_p \, U_p(1,0) = \int_0^1 U_p(1,x) \, \partial_p A(p,x) \, U_p(x,0) \, dx.
\end{equation}
\end{lem}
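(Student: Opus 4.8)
The plan is to differentiate the evolution systems $U_p$ via the standard "variation of parameters" identity for evolution systems, which is morally the statement that for the ODE $u'(t) = A(p,t)u(t)$, the derivative $\partial_p U_p(1,0)$ of the solution operator with respect to a parameter is expressed by Duhamel's formula with inhomogeneous term $\partial_p A(p,x)$. Concretely, fix $p$ and $p'$ and write the telescoping identity
\[
U_{p'}(1,0) - U_p(1,0) = \int_0^1 \frac{d}{dx}\Big( U_{p'}(1,x)\, U_p(x,0) \Big)\, dx
= \int_0^1 U_{p'}(1,x)\,\big(A(p',x) - A(p,x)\big)\,U_p(x,0)\, dx,
\]
where the derivative inside the integral is justified on $\bbX_2$ by $(E_2)$ and $(E_3)$ of Theorem~\ref{thm: Pazy} applied to the two families (note $U_p(x,0)$ maps $\bbX_2$ into $\bbX_2$ by $(E_4)$, and $A(p',x)-A(p,x)$ is bounded $\bbX_2 \to \bbX_1$ hence the composite makes sense, and $U_{p'}(1,x)$ is bounded $\bbX_1 \to \bbX_0$ by $(E_1)$). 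This identity I would first establish carefully as an integral in $\cB(\bbX_2,\bbX_0)$, checking that the integrand is strongly continuous in $x$ so the Bochner/Riemann integral exists.

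Next I would divide by $p'-p$ and pass to the limit $p'\to p$. Writing $A(p',x) - A(p,x) = (p'-p)\,\partial_p A(p,x) + o(p'-p)$, the middle factor converges to $\partial_p A(p,x)$; by hypothesis (ii) this convergence is in $L^\infty([0,1],\cB(\bbX_1,\bbX_0))$, so uniformly in $x$ after composing with the bounded inclusion $\bbX_2 \hookrightarrow \bbX_1$. The outer factors $U_{p'}(1,x)$ and $U_p(x,0)$ need to be controlled: their norms are uniformly bounded by $e^{\omega}$ thanks to hypothesis (i) and $(E_1)$, and I will need that $U_{p'}(1,x) \to U_p(1,x)$ in an appropriate operator-topology sense, uniformly in $x$, as $p'\to p$. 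This is where the bulk of the work lies: I would prove continuity of $p \mapsto U_p$ as a map $[0,1]\to C(\{0\le s\le t\le 1\},\cB(\bbX_1,\bbX_0))$ using the same telescoping trick (now comparing $U_{p'}$ and $U_p$ with only the $\bbX_1\supset\bbX_2$ level of regularity needed for the derivative formula $(E_2)$/$(E_3)$ to apply to the difference), together with hypotheses (i) and (iii) which supply the uniform $\omega$ and the continuity of $p \mapsto A(p,\cdot)|_{\bbX_2}$. An alternative, possibly cleaner, route is to invoke Remark~\ref{rem: explicit construction of evolution system}: approximate each $U_p$ by the piecewise-constant-in-$t$ evolution systems $U_{p,n}$ built from ordinary semigroups $T_{p,t_j}(\cdot)$, prove the analogous continuity/differentiability statement for the $U_{p,n}$ (where everything reduces to finitely many semigroups, and $p\mapsto T_{p,t}(s)$ is handled by the resolvent/Laplace-transform representation and hypothesis (i)), and then show the convergence $U_{p,n}\to U_p$ is uniform in $p$ on the relevant operator spaces.

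The main obstacle I anticipate is precisely the uniform-in-$p$ and uniform-in-$x$ control of $U_{p'}(1,x)$ as $p'\to p$, at the level of $\cB(\bbX_1,\bbX_0)$ rather than merely strongly on $\bbX_0$: the evolution system of Theorem~\ref{thm: Pazy} is only asserted to be strongly continuous, and its construction passes through a non-constructive limit, so one must carefully track how the three hypotheses (i)--(iii) feed into the quantitative estimates of \cite[Ch.~5]{Pazy} (especially \cite[Lem.~5.4.4, Thm.~5.4.3]{Pazy}). Once that continuity statement is in hand, the differentiation under the integral sign and the identification of the limit with the right-hand side of \eqref{eqn: differentiating family of evolution systems} is routine: the integrand $U_p(1,x)\,\partial_p A(p,x)\,U_p(x,0)$ is a strongly (indeed, in $\cB(\bbX_2,\bbX_0)$) measurable and essentially bounded function of $x$, hence integrable, and dominated convergence finishes the argument.
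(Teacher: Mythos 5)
Your proposal is correct and follows essentially the same route as the paper: the Duhamel identity $U_{p'}(1,0)-U_p(1,0)=\int_0^1 U_{p'}(1,x)\,[A(p',x)-A(p,x)]\,U_p(x,0)\,dx$, obtained by differentiating $x\mapsto U_{p'}(1,x)U_p(x,0)$ via $(E_2)$--$(E_4)$, is exactly the paper's starting point, and the limit $p'\to p$ is identified using hypothesis (ii), with (iii) supplying the uniform $\cB(\bbX_2,\bbX_1)$ bound on $\partial_p A$. The uniform control of $U_{p'}(1,x)$ that you flag as the main obstacle requires no new construction (in particular no recourse to the piecewise-constant approximants): taking operator norms in the same Duhamel identity yields $\norm{U_{p'}(b,a)-U_p(b,a)}_{1\to 0}\le C\sup_{x}\norm{A(p',x)-A(p,x)}_{1\to 0}$, uniformly in $a,b$, using (i) and (ii) (not (iii)), which is all that is needed.
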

\begin{proof}
We begin by analyzing the integrand of \eqref{eqn: differentiating family of evolution systems}.
First observe that by assumption (i) and property $(E_1)$ the norms $\norm{U_p(t,s)}_{0\to0}$ and $\norm{U_p(t,s)}_{1 \to 1}$ are bounded independent of $p,t,s$, and by assumption (ii) the norms $\norm{A(p,t)}_{1 \to 0}$ and $\norm{\partial_p A(p,t)}_{1 \to 0}$ are bounded independent of $(p,t)$.
Next, for $v \in \bbX_1$ we note that the map $x \mapsto U_p(x,0)v$ is continuous $[0,1] \to \bbX_1$ by $(E_5)$.
Hence by assumption (ii) the map $x \mapsto \partial_p A(p,x) \, U_p(x,0)v$ is continuous $[0,1] \to \bbX_0$.
It follows that 
\[
x \mapsto U_p(1,x) \, \partial_p A(p,x) \, U_p(x,0)v
\]
is continuous $[0,1] \to \bbX_0$ since the norms $\norm{U_p(1,x)}_{0 \to 0}$ are uniformly bounded and $U_p(1,x)$ is strongly continuous in $x$ by the definition of evolution system.
Thus for each vector $v \in \bbX_1$, the integral
\[
\int_0^1 U_p(1,x) \, \partial_p A(p,x) \, U_p(x,0)v \, dx
\]
exists in $\bbX_0$, and the right-hand side of \eqref{eqn: differentiating family of evolution systems} makes sense as a bounded operator $\bbX_1 \to \bbX_0$.

For $0 \le a \le x \le b \le 1$ and $p,p' \in [0,1]$, we differentiate in $\bbX_0$ using $(E_2)$ and $(E_3)$ to obtain
\[
\partial_x \, U_{p}(b,x)U_{p'}(x,a)v = U_{p'}(b,x)\left[A(p',x)-A(p,x)\right]U_p(x,a)v.
\]
Note that this is not a straightforward application of the product rule, but can be derived by the usual argument using $(E_4)$ for $U_p$ and the fact that $\norm{U_{p}(b,x)}_{0\to0}$ is uniformly bounded.
Using arguments as above we see that 
\[
x \mapsto U_{p'}(b,x)\left[A(p',x)-A(p,x)\right]U_p(x,a)v
\]
is continuous $[0,1] \to \bbX$, and so by the fundamental theorem of calculus we have
\begin{equation}\label{eqn: evolution difference as integral}
U_{p'}(b,a)v - U_p(b,a)v = \int_{a}^b U_{p'}(b,x)\left[A(p',x)-A(p,x)\right]U_p(x,a)v \, dx.
\end{equation}
Using the fact that $\norm{U_p(t,s)}_{1\to1}$ and $\norm{U_{p'}(t,s)}_{0 \to 0}$ are uniformly bounded, we obtain a bound on the operator norm
\begin{equation}\label{eqn: evolution difference estimate}
\norm{U_{p'}(b,a) - U_p(b,a)}_{1 \to 0} \le C \sup_{0 \le x \le 1} \norm{A(p',x)-A(p,x)}_{1 \to 0}
\end{equation}
for a constant $C$ which does not depend on $a$ and $b$.

We now wish to show that
\[
\lim_{h \to 0} \norm{\frac{U_{p+h}(1,0) - U_p(1,0)}{h} - \int_0^1 U_p(1,x) \, \partial_p A(p,x) \, U_p(x,0) \, dx}_{2\to 0} = 0.
\]
Let $\epsilon > 0$.
The norms $\norm{U_p(x,0)}_{2 \to 2}$ are uniformly bounded by condition (i) and the norms $\norm{\partial_p A(p,x)}_{2 \to 1}$ are uniformly bounded by assumption (iii).
Hence by \eqref{eqn: evolution difference estimate}, for $h$ sufficiently small we have
\begin{align*}
\norm{\int_0^1 \big[ U_{p+h}(1,x) - U_p(1,x) \big] \partial_p A(p,x) U_p(x,0) \, dx}_{2 \to 0} < \epsilon.
\end{align*}
Hence
\begin{align*}
&\norm{\frac{U_{p+h}(1,0) - U_p(1,0)}{h} - \int_0^1 U_{p}(1,x) \, \partial_p A(p,x) \, U_p(x,0) \, dx}_{2\to 0} 
\le\\
&\quad\le \epsilon + \norm{\frac{U_{p+h}(1,0) - U_p(1,0)}{h} - \int_0^1 U_{p+h}(1,x) \, \partial_p A(p,x) \, U_p(x,0) \, dx}_{2\to 0} \\
\text{(by \eqref{eqn: evolution difference as integral})} &\quad= \epsilon + \norm{\int_0^1 U_{p+h}(1,x) \left[ \frac{A(p+h,x)-A(p,x)}{h} - \partial_p A(p,x)\right]U_p(x,0) \, dx}_{2 \to 0}\\
&\quad \le \epsilon + C' \sup_{0 \le x \le 1} \norm{\frac{A(p+h,x)-A(p,x)}{h} - \partial_p A(p,x)}_{1 \to 0},
\end{align*}
where the last step used the uniform boundedness of $\norm{U_{p+h}(1,x)}_{0\to 0}$ and $\norm{U_p(x,0)}_{0 \to 0}$.
By assumption (ii), this last expression converges to $0$ as $h \to 0$, completing the proof.
\end{proof}


Finally, we close the section by observing that time-ordered exponentials are compatible with adjoints.

\begin{lem}\pounds{15}\label{lem: adjoint of time ordered exponential}
Let $\cK \subset \cH$ be a continuous and dense embedding of Hilbert spaces, and let $\{A(t)\}_{t \in [0,1]}$ be unbounded operators on $\cH$ such that both $A(t)$ and the adjoints $B(t):=A(1-t)^*$ satisfy the assumptions of Theorem~\ref{thm: Pazy}.
Let $U(t,s)$ be the evolution system corresponding to the family $A(t)$, and let $\tilde U(t,s)$ be the evolution system corresponding to the family $B(t)$.
Then
\[
\tilde U(t,s) = U(1-s,1-t)^*.
\]
\end{lem}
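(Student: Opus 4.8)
The strategy is to reduce to the piecewise-constant approximations $U_n(t,s)$ from Remark~\ref{rem: explicit construction of evolution system}, where the identity becomes an elementary statement about ordinary operator semigroups, and then pass to the limit. First I would recall that $U_n(t,s) \to U(t,s)$ strongly, and that the analogous approximants $\tilde U_n(t,s)$ built from the family $B(t) = A(1-t)^*$ converge strongly to $\tilde U(t,s)$. The key algebraic observation is that a single operator semigroup and its adjoint are related by $e^{sA^*} = (e^{sA})^*$, so if $C$ is the generator of a $C_0$-semigroup $T_C$ and $C^*$ generates $T_{C^*}$ (both hypotheses guaranteed here by Theorem~\ref{thm: Pazy} applied to $A(t)$ and to $B(t)$), then $T_{C^*}(s) = T_C(s)^*$.

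Next I would match up the two piecewise-constant schemes. With the partition $t_k = \tfrac{k}{n}$, the approximant $U_n(1-s,1-t)$ is a time-ordered product of semigroup blocks $T_{t_j}(\cdot)$ for the family $A(\cdot)$, running over the subinterval $[1-t,1-s]$. Taking the adjoint reverses the order of the product and replaces each block $T_{t_j}(r)$ by $T_{t_j}(r)^* = $ the time-$r$ semigroup generated by $A(t_j)^*$. Under the change of index $t_j \mapsto 1 - t_j$, the generator $A(t_j)^*$ is exactly $B(1-t_j)$, and the reversal of the product order is precisely what turns the partition of $[1-t,1-s]$ for $A$ into the partition of $[s,t]$ for $B$. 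Thus $U_n(1-s,1-t)^* = \tilde U_n(t,s)$ exactly (one must be mildly careful that both schemes use partitions refining the relevant sub-interval, but since the scheme in Remark~\ref{rem: explicit construction of evolution system} uses a uniform grid on $[0,T]=[0,1]$ and the grid is symmetric under $r \mapsto 1-r$, the two grids correspond under $t_j \mapsto 1-t_j$).

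Finally I would pass to the limit: $\tilde U_n(t,s) \to \tilde U(t,s)$ strongly, and $U_n(1-s,1-t) \to U(1-s,1-t)$ strongly. Strong convergence of bounded operators does not in general imply strong convergence of adjoints, so here I would instead argue weakly: for all $\xi,\eta$ we have $\langle U(1-s,1-t)^* \xi, \eta\rangle = \langle \xi, U(1-s,1-t)\eta\rangle = \lim_n \langle \xi, U_n(1-s,1-t)\eta\rangle = \lim_n \langle U_n(1-s,1-t)^*\xi,\eta\rangle = \lim_n \langle \tilde U_n(t,s)\xi,\eta\rangle = \langle \tilde U(t,s)\xi,\eta\rangle$, using the uniform bound $\norm{U_n} \le e^{\omega}$ to justify the middle step. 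Since this holds for all $\xi,\eta$, we conclude $U(1-s,1-t)^* = \tilde U(t,s)$. The main obstacle is the bookkeeping in the second step — getting the index substitution $t_j \mapsto 1-t_j$, the order-reversal under adjoint, and the roles of $s,t$ versus $1-s,1-t$ to line up correctly — together with checking that the approximation schemes for $A$ and $B$ really are mirror images of each other; once that combinatorial identity is nailed down, the limiting argument is routine.
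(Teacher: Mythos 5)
Your proposal is correct and follows essentially the same route as the paper: approximate both evolution systems by products of semigroup blocks, use that on a Hilbert space the adjoint of the semigroup generated by $A(\tau)$ is the semigroup generated by $A(\tau)^*=B(1-\tau)$, observe that taking adjoints reverses the order of the product so that $U_n(1-s,1-t)^*$ becomes an approximation scheme for the family $B$ on $[s,t]$, and then identify the limits in the weak operator topology, where the adjoint is continuous. The only caveat is your claim that $U_n(1-s,1-t)^*=\tilde U_n(t,s)$ \emph{exactly}: since the piecewise-constant scheme samples the generator at left endpoints, its adjoint is a right-endpoint scheme for $B$, so the two approximants differ by an index shift; this is harmless because the right-endpoint products converge to $\tilde U(t,s)$ by the same argument (this is exactly how the paper phrases it, invoking the proof of Pazy's Theorem 5.3.1 rather than literal equality of approximants).
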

\begin{proof}
Let $T_t$ be the semigroup generated by $A(t)$, and let $\tilde T_t$ be the semigroup generated by $B(t)$.
Fix $0 \le s \le t \le 1$, and for a nonnegative integer $n$ let $\Delta = \tfrac{t-s}{n}$ and 
\[
U_n(t,s) =  T_{s+(n-1)\Delta}(\Delta)\cdots T_{s+\Delta}(\Delta)T_s(\Delta).
\]
From e.g. the explicit formula for $T_t$ in \cite[Thm. 1.8.3]{Pazy} we see $T_\tau(s)^* = \tilde T_{1-\tau}(s)$.
Hence
\begin{align*}
U_n(1-s,1-t)^* &= T_{1-t}(\Delta)^* \cdots T_{1-t+(n-1)\Delta}(\Delta)^* = \tilde T_{t}(\Delta) \cdots \tilde T_{t-(n-1)\Delta}(\Delta)\\
&= \tilde T_{s+n\Delta}(\Delta) \cdots \tilde T_{s+\Delta}(\Delta).
\end{align*}
Arguing as in \cite[Thm. 5.3.1]{Pazy} (see also Remark~\ref{rem: explicit construction of evolution system}), we have
$U_n(t,s) \to U(t,s)$ in the strong operator topology.
Examining the above formula for $U_n(1-s,1-t)^*$, by the same argument we have $U_n(1-s,1-t)^* \to \tilde U(t,s)$ in the strong operator topology as well.
In particular these limits hold in the weak operator topology, for which the adjoint is continuous, and we conclude $\tilde U(t,s) = U(1-s,1-t)^*$, as required.
\end{proof}

\section{Estimates for the Virasoro algebra}\label{sec: estimates for Virasoro}

Fix $c\in \bbR_{\ge 0}$, and let
\[
\Vir_c :=
\textstyle \big\{\sum_{n\in \bbZ} a_n L_n+ k\text{\bf 1}\,\big|\, 
a_n, k \in \bbC
\big\}
\]
denote the Virasoro Lie algebra,
with Lie bracket
$[L_m, L_n] = (m-n)L_{n+m} + \frac{c}{12} (m^3-m) \delta_{n+m,0}$.
We use the symbol $\text{\bf 1}\in\Vir_c$ to denote a formal unit, and require representations of $\Vir_c$ to map $\text{\bf 1}$ to the identity operator.

A representation of $\Vir_c$ is said to have \emph{positive energy} if the associated operator $L_0$ has discrete spectrum, the spectrum is bounded from below, and all the (generalized)  eigenspaces are finite dimensional.
We will only be concerned with unitary representations here, in which $L_0$ acts as a self-adjoint operator, and by a positive energy representation we shall always mean a unitary positive energy representation.
Every such representation splits as a (possibly infinite) direct sum of irreducible ones.

By work of Goodman--Wallach \cite{GoWa85}, every 
unitary positive energy representation $W$ of $\Vir_c$
extends to a representation of the completed Lie algebra
\begin{equation}\label{eq: compl of Virc} 
\cX_c(S^1):=\Big\{\sum_{n=-\infty}^\infty a_nL_n + k\text{\bf 1} \,\Big|\, a_n, k \in \bbC, a_n\searrow 0 \text{ faster than any polynomial}\Big\} 
\end{equation}
on the Hilbert space completion $\cH$ of $W$.
Here, $\cX_c(S^1)$ is the central extension of the Lie algebra $\cX(S^1)$ of smooth vector fields $S^1$ associated to the Virasoro cocycle \eqref{def: omega_Vir}.
Given $X \in \cX_c(S^1)$, we write $\pi(X)$ for the closed unbounded operator on $\cH$ obtained as the closure of the corresponding operator with domain $W$, and by abuse of notation we write $L_n$ for the closed operator on $\cH$ corresponding to the basis of $\Vir_c$, in addition to the corresponding vector field.
The unitarity of the representation yields the identity 
\begin{equation}\label{eqn: virasoro adjoint}
\pi(f\partial_\theta)^* = -\pi(\overline{f}\partial_\theta),
\end{equation}
which is an identity of unbounded operators as a consequence of Nelson's commutator theorem (see e.g. \cite[\S3.2.2]{Weiner05}).
For $X=f\partial_\theta \in \cX_c(S^1)$, we write $X^* = -\overline{f}\partial_\theta$, so that $\pi$ is a $*$-representation.

The operators $\pi(X)$ all share a common invariant core, namely the smooth vectors for $L_0$, i.e.\,the subspace $\bigcap_{n>0} \operatorname{Dom}(L_0^n)$ of $\cH$.
Since $\pi(X+Y)$ and $\pi(X)+\pi(Y)$ agree on this common core, we have an inclusion
\begin{equation}\label{eqn: pixy inclusion 1}
\pi(X+Y) \subset \overline{\pi(X)+\pi(Y)}.
\end{equation}
This inclusion is in fact an equality of unbounded operators.

\begin{lem}\label{lem: equality unbounded operators}
Let $X,Y \in \cX_c(S^1)$. Then as unbounded operators we have
\[
\pi(X+Y) = \overline{\pi(X)+\pi(Y)} \quad\,\,\, \text{and}\quad\,\,\, \pi([X,Y]) = \overline{[\pi(X),\pi(Y)]}.
\]
\end{lem}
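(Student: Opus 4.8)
The plan is to treat both identities simultaneously: write $Z\in\cX_c(S^1)$ for the Lie element (so $Z=X+Y$ in the first case, $Z=[X,Y]$ in the second) and $R$ for the corresponding unbounded operator ($R=\pi(X)+\pi(Y)$, resp.\ $R=[\pi(X),\pi(Y)]$); the goal becomes the single statement $\overline R=\pi(Z)$. Let $\cD_\infty:=\bigcap_{n>0}\operatorname{Dom}(L_0^n)$ be the smooth core. It is dense, invariant under every $\pi(V)$ ($V\in\cX_c(S^1)$), a core for each $\pi(V)$ (since $\pi(V)$ is the closure of the corresponding operator of domain $W$, and $W\subseteq\cD_\infty$), and on $\cD_\infty$ the operators $\pi(V)$ restrict to an honest representation of $\cX_c(S^1)$. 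In particular $R$ and $\pi(Z)$ agree on $\cD_\infty$, which gives $\pi(Z)\subseteq\overline R$ as in \eqref{eqn: pixy inclusion 1}; moreover $R$ is closable, since $R^*$ contains the densely defined operator $(\pi(X^*)+\pi(Y^*))|_{\cD_\infty}$, resp.\ $[\pi(Y^*),\pi(X^*)]|_{\cD_\infty}$. The content of the lemma is the reverse inclusion $\overline R\subseteq\pi(Z)$.

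I would obtain this by computing adjoints. Taking adjoints in $\pi(Z)\subseteq\overline R$ and using the $*$-representation identity $\pi(V)^*=\pi(V^*)$ from \eqref{eqn: virasoro adjoint}, we get $R^*=(\overline R)^*\subseteq\pi(Z)^*=\pi(Z^*)$. For the opposite inclusion it suffices, since $R^*$ is closed and $\cD_\infty$ is a core for $\pi(Z^*)$, to show $\pi(Z^*)|_{\cD_\infty}\subseteq R^*$, i.e.\ that $\ip{Rv,w}=\ip{v,\pi(Z^*)w}$ for all $w\in\cD_\infty$ and $v\in\operatorname{Dom}(R)$. In the sum case $v\in\operatorname{Dom}(\pi(X))\cap\operatorname{Dom}(\pi(Y))$, and since $w\in\operatorname{Dom}(\pi(X)^*)=\operatorname{Dom}(\pi(X^*))$ (and likewise for $Y$),
\[
\ip{Rv,w}=\ip{v,\pi(X^*)w+\pi(Y^*)w}=\ip{v,\pi(X^*+Y^*)w}=\ip{v,\pi(Z^*)w},
\]
using $\pi(X^*)+\pi(Y^*)=\pi(X^*+Y^*)$ on $\cD_\infty$ and $(X+Y)^*=X^*+Y^*$. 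In the commutator case, $v\in\operatorname{Dom}(R)$ forces $\pi(Y)v\in\operatorname{Dom}(\pi(X))$, and since $\pi(X^*)w\in\cD_\infty$ by invariance we get
\[
\ip{\pi(X)\pi(Y)v,w}=\ip{\pi(Y)v,\pi(X^*)w}=\ip{v,\pi(Y^*)\pi(X^*)w},
\]
and symmetrically for $\ip{\pi(Y)\pi(X)v,w}$; hence $\ip{Rv,w}=\ip{v,[\pi(Y^*),\pi(X^*)]w}=\ip{v,\pi([Y^*,X^*])w}=\ip{v,\pi(Z^*)w}$, using the bracket relation on $\cD_\infty$ and $[X,Y]^*=[Y^*,X^*]$. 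Therefore $R^*=\pi(Z^*)$.

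Applying the adjoint once more and using $\pi(V)^*=\pi(V^*)$ together with the fact that $*$ is an involution on $\cX_c(S^1)$, we conclude
\[
\overline R=R^{**}=\pi(Z^*)^*=\pi\bigl((Z^*)^*\bigr)=\pi(Z).
\]
The only delicate point is the domain bookkeeping: one must check that $R$ and $R^*$ are densely defined so that $\overline R=R^{**}$ is legitimate, and everything hinges on the three properties of $\cD_\infty$ (density, invariance under all $\pi(V)$, and the fact that the linear and bracket relations of $\cX_c(S^1)$ hold on $\cD_\infty$ strictly, not merely up to closure). Granting these, the argument is purely formal and uses no analytic estimates. Alternatively one could try to identify $\operatorname{Dom}(\overline R)$ by hand, approximating each vector of $\operatorname{Dom}(\overline R)$ by smooth vectors and invoking the Virasoro energy bounds to force convergence of $\pi(X)v,\pi(Y)v$ and of $\pi(Z)v$; but the adjoint computation is cleaner and avoids estimates altogether.
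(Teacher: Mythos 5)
Your argument is correct and follows essentially the same route as the paper: the easy inclusion $\pi(Z)\subseteq\overline{R}$ on the common invariant core of smooth vectors, then the reverse inclusion via adjoints and the $*$-relation $\pi(V)^*=\pi(V^*)$ from \eqref{eqn: virasoro adjoint}. Your explicit pairing computation showing $R^*=\pi(Z^*)$ is just an unwound version of the paper's use of the fact that the sum (resp.\ product) of adjoints is contained in the adjoint of the sum (resp.\ product), followed by taking adjoints once more.
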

\begin{proof}
In light of \eqref{eqn: pixy inclusion 1} and \eqref{eqn: virasoro adjoint},
\[
\pi(X+Y)^* = \pi(X^*+Y^*) \subset \overline{\pi(X^*) + \pi(Y^*)} = \overline{\pi(X)^* + \pi(Y)^*}
\]
and taking adjoints yields
\begin{equation}\label{eq: 3rd inclusion}
\overline{\pi(X)+\pi(Y)}  \subset (\pi(X)^* + \pi(Y)^*)^* \subset \pi(X+Y).
\end{equation}
Combining \eqref{eqn: virasoro adjoint} and \eqref{eq: 3rd inclusion} yields
$
\pi(X+Y) = \overline{\pi(X)+\pi(Y)}.
$

The argument for the commutator is similar.
Since smooth vectors for $L_0$ are an invariant core for $\pi(X)$, $\pi(Y)$, and $\pi([X,Y])$, we have 
\begin{equation}\label{eqn: comm inclusion 1}
\pi([X,Y]) \subset \overline{[\pi(X),\pi(Y)]}.
\end{equation}
Using \eqref{eqn: comm inclusion 1} and \eqref{eqn: virasoro adjoint}, and the fact that the sum (resp.\,product) of adjoints of unbounded operators is contained in the adjoint of the sum (resp.\,product), we have: 
\[
\pi([Y^*,X^*]) \subset \overline{[\pi(Y)^*,\pi(X)^*]} \subset \overline{(\pi(X)\pi(Y))^* - (\pi(Y)\pi(X))^*} \subset [\pi(X),\pi(Y)]^*.
\]
Taking adjoints and invoking \eqref{eqn: virasoro adjoint} yields the reverse inclusion
\[
\overline{[\pi(X),\pi(Y)]} \subset \pi([Y^*,X^*])^* = \pi([X,Y]).
\qedhere
\]
\end{proof}

From now on, we suppress the operation of taking closure, and use the symbol $+$ to indicate taking the \emph{closure} of the sum of unbounded operators, and the bracket to indicate the \emph{closure} of the commutator of unbounded operators.

Using the canonical embedding $\cX(S^1)\to \cX_c(S^1)$ (which is not a Lie algebra homomorphism), we can also use the notation $\pi(X)$ when $X\in \cX(S^1)$.

\begin{rem}\label{rem: vector field vs smeared field}
The stress-energy tensor corresponding to the representation is the formal series $T(z)=\sum_{n=-\infty}^\infty L_n z^{-n-2}$, and if $f \in C^\infty(S^1)$ it is common to denote the corresponding smeared field by $T(f):=\sum_{n=-\infty}^\infty \hat f(n) L_n$, where $\hat f(n)$ are the Fourier coefficients of $f$ (and the closure of the operator is taken).
These notations are related by the formula $\pi(f \partial_\theta) = T(if)$.
\end{rem}

\begin{defn}
We denote by $\cX^{in}$ the cone of vector fields on $S^1$ that are inward pointing (potentially tangential to $S^1$):
\[
\cX^{in} = \{ f(\theta)\partial_\theta \in \cX(S^1) \, | \, \mathrm{Im} f(\theta) \ge 0 \}.
\]
We write $\cP^{in}$ for the space of smooth paths $X:[0,1] \to \cX^{in}$.
\end{defn}

Fix a positive energy representation of the Virasoro algebra $\Vir_c$, not necessarily irreducible, and let $\cH$ be its Hilbert space completion.
The main result of this section is to show that for $X \in \cP^{in}$ the time-ordered exponential
\[
\prod_{1\ge\tau\ge 0} \Exp(\pi(X(\tau))) d\tau 
\]
exists as a bounded operator on $\cH$.
Recall that (according to Definition~\ref{defn: time ordered exponential})  the time-ordered exponential is the evolution system with generators $\pi(X(\tau))$.
We will use Theorem~\ref{thm: Pazy} to demonstrate the existence of this evolution system, checking the conditions $(H_1)$, $(H_2^+)$, and $(H_3)$ of that Theorem.
We will in fact demonstrate the existence of the time-ordered exponential on a family of Hilbert spaces $\cH_n$, as follows.

For $n \ge 0$, let $\cH_n$ be the domain of the closed operator $(1+L_0)^n$, equipped with the norm 
\[
\norm{x}_{\cH_n} := \norm{(1+L_0)^n x}_{\cH}.
\]
For $X \in \cP_c^{in}$, we will verify the hypotheses of Theorem~\ref{thm: Pazy} for the operators $A(t)=\pi(X(t))$, regarded as unbounded operators on space $\bbX=\cH_n$, with respect to the subspace $\bbY=\cH_{n+1}$, for every $n=0,1,2,\ldots$.
We will denote by $(H_1)_n$, $(H_2^+)_n$, and $(H_3)_n$ the conditions arising in the hypothesis of the Theorem to indicate which choice of spaces are under consideration.

The beginning of the study of analytic aspects of representations of $\Vir_c$ is the energy bound of Goodman and Wallach \cite[Prop. 2.1]{GoWa85}.
The energy bound implies that $\cH_1$ is contained in the domain of $\pi(X)$ for every $X \in \cX_c(S^1)$ and moreover that $\pi(X)$ maps $\cH_{n+1}$ into $\cH_n$ as a bounded operator for every $n \in \Z_{\ge 0}$, with the following estimate
\[
\norm{\pi(X)v}_{\cH_n} \le \sqrt{2}\norm{X}_n \norm{v}_{\cH_{n+1}} + \sqrt{c/12}(\norm{X}_{n+1} \norm{v}_{\cH_{n+1\!/\!2}}+\norm{X}_{n+3/2} \norm{v}_{\cH_n})
\]
for all $v \in \cH_{n+1}$,
where the norm $\norm{X}_t$ is defined in terms of the Fourier series coefficients $\hat X(m)$ by\footnote{More precisely, if $X = f(\theta)\partial_\theta$ then $\hat X(m)$ is the $m$-th Fourier coefficient of $f$.}
\[
\norm{X}_t := \sum_{m \in \bbZ} (1+\abs{m})^t |\hat X(m)|.
\]
Following Toledano Laredo (see \cite[Proof of Thm. 6.1.1]{ToledanoLaredo99}), we therefore have
\begin{equation}
\label{eqn: energy bound} \norm{\pi(X)v}_{\cH_{n}} \le 
C\|X\|_{n+3/2}\norm{v}_{\cH_{n+1}}
\end{equation}
where $C=1+\sqrt{2}+\sqrt{c/12}$.
As an immediate consequence of \eqref{eqn: energy bound}, we also get a bound on the commutator of $\pi(X)$ with $L_0$:
\begin{equation}
\label{eqn: commutator energy bound} \norm{[L_0,\pi(X)]v}_{\cH_n} \le 
C\|X\|_{n+5/2}\norm{v}_{\cH_{n+1}}.
\end{equation}

We begin by using these estimates to give straightforward verifications of the conditions $(H_2^+)_n$ and $(H_3)_n$.

\begin{lem}[$H_2^+$] \label{lem: H2} \pounds{36}
Let $X \in \cP^{in}$, and let $Q=1+L_0$, regarded as a unitary operator $\cH_{n+1} \to \cH_n$ for any $n \in \bbZ_{\ge 0}$.
Then the operators
\[
B(t) := Q \pi(X(t)) Q^{-1} - \pi(X(t))
\]
are bounded operators on $\cH_n$, strongly continuous as a function of $t$.
\end{lem}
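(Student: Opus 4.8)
The plan is to express $B(t)$ as the commutator $[L_0,\pi(X(t))]$ followed by $Q^{-1}$, and then let the commutator energy bound \eqref{eqn: commutator energy bound} do all the work. To make the algebra legitimate I would first pass to the common invariant core $\cD:=\bigcap_{m>0}\mathrm{Dom}(L_0^m)$ of smooth vectors for $L_0$: this subspace is dense in every $\cH_n$ (already the finite linear combinations of $L_0$-eigenvectors are), and it is preserved by $L_0$, by $Q=1+L_0$, by $Q^{-1}=(1+L_0)^{-1}$, and by each $\pi(X(t))$. On $\cD$ there are no domain subtleties, and writing $\pi(X(t))=\pi(X(t))QQ^{-1}$ one obtains for $v\in\cD$ the identity
\[
B(t)v \;=\; Q\pi(X(t))Q^{-1}v-\pi(X(t))v \;=\; \big[L_0,\pi(X(t))\big]Q^{-1}v,
\]
the constant term of $Q$ dropping out of the commutator.

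Next I would estimate. Since $Q$ is unitary $\cH_{n+1}\to\cH_n$, its inverse is a unitary $\cH_n\to\cH_{n+1}$, so $\norm{Q^{-1}v}_{\cH_{n+1}}=\norm{v}_{\cH_n}$; combining this with \eqref{eqn: commutator energy bound} yields, for $v\in\cD$,
\[
\norm{B(t)v}_{\cH_n}=\norm{[L_0,\pi(X(t))]Q^{-1}v}_{\cH_n}\le C\norm{X(t)}_{n+5/2}\norm{Q^{-1}v}_{\cH_{n+1}}=C\norm{X(t)}_{n+5/2}\norm{v}_{\cH_n},
\]
where $\norm{X(t)}_{n+5/2}<\infty$ because $X(t)\in\cX(S^1)$ has smooth (hence rapidly decreasing Fourier) coefficient. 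As $\cD$ is dense in $\cH_n$, this shows $B(t)$ extends uniquely to a bounded operator on $\cH_n$ with $\norm{B(t)}_{\cH_n\to\cH_n}\le C\norm{X(t)}_{n+5/2}$.

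For continuity in $t$, the same computation applied to the difference gives $B(t)v-B(s)v=[L_0,\pi(X(t)-X(s))]Q^{-1}v$ for $v\in\cD$ (using additivity of $\pi$ on $\cD$), hence $\norm{B(t)-B(s)}_{\cH_n\to\cH_n}\le C\norm{X(t)-X(s)}_{n+5/2}$. Since $X\in\cP^{in}$ is a smooth, in particular continuous, path into $\cX(S^1)=C^\infty(S^1)\partial_\theta$ with its Fréchet topology, and since $\norm{\cdot}_{n+5/2}$ is a continuous seminorm on $C^\infty(S^1)$ (it is dominated by a fixed $C^k$-norm, $k=\lceil n+5/2\rceil+2$, by repeated integration by parts on the Fourier coefficients), the right-hand side tends to $0$ as $t\to s$. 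Thus $t\mapsto B(t)$ is in fact norm continuous, which is stronger than the asserted strong continuity.

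I do not anticipate a genuine obstacle: the only points requiring care are the elementary bookkeeping that $\cD$ is a common invariant core dense in each $\cH_n$ (so the displayed algebraic identity is valid without domain qualifications) and the observation that $\norm{\cdot}_{n+5/2}$ is a continuous seminorm on $\cX(S^1)$ (so that continuity of the path $X$ transfers to norm continuity of $B(\cdot)$). Everything else is an immediate consequence of \eqref{eqn: commutator energy bound}.
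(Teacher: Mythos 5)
Your proposal is correct and follows essentially the same route as the paper: write $B(t)=[L_0,\pi(X(t))]Q^{-1}$, apply the commutator energy bound \eqref{eqn: commutator energy bound} together with the unitarity of $Q^{-1}:\cH_n\to\cH_{n+1}$ to get boundedness and extend by density, then deduce continuity in $t$ from the continuity of $Y\mapsto[L_0,\pi(Y)]Q^{-1}$ and of the path $t\mapsto X(t)$. The only cosmetic differences are that the paper works a priori on $\cH_{n+1}$ rather than on the core $\cD$, and your observation that the estimate actually yields norm continuity is a harmless strengthening of the strong continuity asserted in the lemma.
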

\begin{proof}
A priori $B(t)$ is defined on $\cH_{n+1}$, and is given by the formula
\[
B(t)v = [Q,\pi(X(t))]Q^{-1}v = [L_0,\pi(X(t))]Q^{-1}v, \qquad  v \in \cH_{n+1}.
\]
By \eqref{eqn: commutator energy bound} we have
\[
\norm{B(t)v}_{\cH_{n}} \le C \norm{X}_{n+5/2} \norm{Q^{-1}v}_{\cH_{n+1}} = C \norm{X}_{n+5/2} \norm{v}_{\cH_n},
\]
and thus $B(t)$ extends to a bounded operator on $\cH_n$.

It remains to verify that the map $t \mapsto B(t)$ is strongly continuous.
The linear map $\cX^{in} \to \cH_n$ given by $Y \mapsto [Q,\pi(Y)]Q^{-1}v$ is continuous by the above argument, and the map $[0,1] \to \cX^{in}$ given by $t \mapsto X(t)$ is continuous by definition, and so we are done.
\end{proof}

We can similarly use the energy bounds to give a short verification of $(H_3)_n$:

\begin{lem}[$H_3$]\pounds{35} \label{lem: H3}
Let $X \in \cP^{in}$ and let $n  \in \bbZ_{\ge 0}$.
Then for each $0 \le t \le 1$ the operator $\pi(X(t))$ is bounded $\cH_{n+1} \to \cH_n$, and moreover the map $t \mapsto \pi(X(t))$ is continuous $[0,1] \to \cB(\cH_{n+1},\cH_n)$.
\end{lem}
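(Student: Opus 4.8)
The entire statement is an immediate consequence of the Goodman--Wallach energy bound \eqref{eqn: energy bound}, which asserts that $\norm{\pi(Y)v}_{\cH_n} \le C\norm{Y}_{n+3/2}\norm{v}_{\cH_{n+1}}$ for all $Y\in\cX_c(S^1)$ and $v\in\cH_{n+1}$. Applying this with $Y = X(t)$ gives directly that $\pi(X(t))$ restricts to a bounded operator $\cH_{n+1}\to\cH_n$, with operator norm bounded by $C\norm{X(t)}_{n+3/2}$. This disposes of the first assertion.

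For the continuity statement, the plan is to reduce to linearity of $\pi$ together with the same bound applied to a difference. First observe that for $s,t\in[0,1]$ the operators $\pi(X(t))$, $\pi(X(s))$, and $\pi(X(t)-X(s))$ are all bounded $\cH_{n+1}\to\cH_n$ by the previous paragraph, and they agree on the dense subspace $W$ (indeed on any common invariant core, e.g. the smooth vectors for $L_0$), where $\pi$ is honestly linear; by Lemma~\ref{lem: equality unbounded operators} this identifies $\pi(X(t)) - \pi(X(s))$ with $\pi(X(t)-X(s))$ as operators on $\cH_{n+1}$. Hence
\[
\norm{\pi(X(t)) - \pi(X(s))}_{\cB(\cH_{n+1},\cH_n)} \;=\; \norm{\pi\big(X(t)-X(s)\big)}_{\cB(\cH_{n+1},\cH_n)} \;\le\; C\,\norm{X(t)-X(s)}_{n+3/2}.
\]

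It remains to check that $\norm{X(t)-X(s)}_{n+3/2}\to 0$ as $s\to t$. This follows because $X\in\cP^{in}$ is by definition a smooth (in particular continuous) path $[0,1]\to\cX^{in}\subset C^\infty(S^1)$, and the seminorm $\norm{\,\cdot\,}_{n+3/2}$, being a weighted $\ell^1$ norm on Fourier coefficients with polynomial weight, is a continuous seminorm for the Fr\'echet topology on $C^\infty(S^1)$ (dominated by a $C^k$-norm for $k$ slightly larger than $n+3/2$, using $\sum_m(1+|m|)^{-2}<\infty$). Combining the two displays shows $t\mapsto\pi(X(t))$ is continuous $[0,1]\to\cB(\cH_{n+1},\cH_n)$. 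I do not anticipate any genuine obstacle here; the only point requiring a word of care is the identification of $\pi(X(t))-\pi(X(s))$ with $\pi(X(t)-X(s))$ as bounded maps $\cH_{n+1}\to\cH_n$, which is handled by invoking Lemma~\ref{lem: equality unbounded operators} together with the energy bound as above.
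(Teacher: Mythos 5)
Your proof is correct and follows essentially the same route as the paper: boundedness from the energy bound \eqref{eqn: energy bound}, and continuity from the continuity of $t\mapsto X(t)$ into $C^\infty(S^1)$ combined with the fact that $Y\mapsto\pi(Y)$ is (the restriction of) a continuous linear map into $\cB(\cH_{n+1},\cH_n)$. Your extra care in identifying $\pi(X(t))-\pi(X(s))$ with $\pi(X(t)-X(s))$ via Lemma~\ref{lem: equality unbounded operators} and density of $W$ in $\cH_{n+1}$ just makes explicit what the paper phrases as linearity of $\pi$.
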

\begin{proof}
By \eqref{eqn: energy bound}, each $\pi(X(t))$ is bounded $\cH_{n+1} \to \cH_{n}$ with 
\[
\norm{\pi(X(t))}_{n+1\to n} \le |X|_{n+1}.
\]
Hence the map $\cX^{in} \to \cB(\cH_{n+1},\cH_n)$ given by $Y \mapsto \pi(Y)$ is a continuous linear map.
The map $t \mapsto X(t)$ is continuous $[0,1] \to \cX^{in}$ by definition, which establishes the claim.
\end{proof}

We now investigate the conditions $(H_1)_n$, and more generally the semigroups generated by the operators $\pi(X)$ on $\cH_{n}$.
The key technical tool will be the `quantum energy inequality' of Fewster and Hollands (\cite[Thm. 4.1]{FewsterHollands}, adapted to the circle as described in Remark 3 following the theorem, and also \cite{CarpiWeinerLocal}).
In this context, the result states that if $X=ig(\theta) \partial_\theta \in \cX$ and $g \ge 0$, then we have an inequality of operators
\begin{equation}\label{eqn: QEI}
\pi(X) \le \mu_X := \frac{c}{24}\| \partial_\theta \sqrt{g}\|_{L^2}^2,
\end{equation}
where $\partial_\theta \sqrt{g}$ is taken to be $0$ at points $\theta$ where $g(\theta)=0$, and is otherwise given by $g_\theta(\theta)/(2\sqrt{g(\theta)})$. 
More generally, if $X=f\partial_\theta \in \cX_c^{in}$, then we set
\begin{equation}\label{eqn: muX}
\mu_X:= \mu_{i \operatorname{Im} f \partial_\theta} = \frac{c}{24} \| \partial_\theta \sqrt{\operatorname{Im} f}\|_{L^2}^2.
\end{equation}
A short argument based on Taylor's theorem \cite[Eqn. A.3]{FewsterHollands} yields
\begin{equation}\label{eqn: bound of mu}
    \mu_{X} \le C \norm{f_{\theta\theta}}_{L^2}
\end{equation}
when $X=f\partial_\theta \in \cX^{in}$, for a constant $C$ that depends only on the central charge $c$.

We use the inequality \eqref{eqn: QEI} to obtain explicit estimates on the semigroups generated by the operators $\pi(X)$.

\begin{lem} \label{lem: semigroup generated by inward pointing vector field} \pounds{38}
Let $X \in \cX^{in}$, and let $\mu=\mu_{X}$ be as in \eqref{eqn: muX}.
Then $\pi(X)$ generates a $C_0$-semigroup $T(t)$ of bounded operators on $\cH$ satisfying $\norm{T(s)} \le e^{\mu s}$, and the resolvent set $\rho(\pi(X))$ contains the open interval $(\mu,\infty)$.
\end{lem}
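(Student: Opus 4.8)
The plan is to apply the Lumer--Phillips result established as Lemma~\ref{lem: lumer phillips}, so the real work is to verify the two dissipativity estimates
\[
\operatorname{Re}\ip{\pi(X)v,v} \le \mu \norm{v}^2 \quad\text{for } v \in D(\pi(X)),
\qquad
\operatorname{Re}\ip{\pi(X)^*v,v} \le \mu \norm{v}^2 \quad\text{for } v \in D(\pi(X)^*),
\]
together with closedness of $\pi(X)$ (which is immediate, since $\pi(X)$ is defined as a closure). First I would observe that $\pi(X)$ is closed and densely defined by construction, and that by \eqref{eqn: virasoro adjoint} one has $\pi(X)^* = \pi(X^*)$ where $X^* = -\overline f \partial_\theta$ if $X = f\partial_\theta$. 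The crucial point is that $X \in \cX^{in}$ means $\operatorname{Im} f \ge 0$, and hence $\operatorname{Im}(\overline{-f}) = \operatorname{Im} f \ge 0$ as well; that is, $X^* \in \cX^{in}$ too, and moreover $\mu_{X^*} = \mu_X$ by the defining formula \eqref{eqn: muX} since $\operatorname{Im}(-\overline f) = \operatorname{Im} f$. So it suffices to prove the first dissipativity inequality for an arbitrary element of $\cX^{in}$, and the second follows by applying the first to $X^*$.

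Next I would establish $\operatorname{Re}\ip{\pi(X)v,v} \le \mu_X\norm{v}^2$ for $v$ in a core, and then pass to the full domain by closedness. Write $X = f\partial_\theta$ with $f = g_1 + i g_2$, $g_j$ real-valued and $g_2 \ge 0$. Then $\pi(X) = \pi(g_1\partial_\theta) + i\,\pi(g_2\partial_\theta) \cdot(\text{sign bookkeeping})$; more carefully, split $X = g_1\partial_\theta + i g_2\partial_\theta$. The operator $\pi(g_1\partial_\theta)$ is skew-adjoint (since $(g_1\partial_\theta)^* = -\overline{g_1}\partial_\theta = -g_1\partial_\theta$ as $g_1$ is real), so $\operatorname{Re}\ip{\pi(g_1\partial_\theta)v,v} = 0$. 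The operator $\pi(i g_2\partial_\theta)$ is self-adjoint (since $(ig_2\partial_\theta)^* = -\overline{ig_2}\partial_\theta = i g_2\partial_\theta$), so $\operatorname{Re}\ip{\pi(ig_2\partial_\theta)v,v} = \ip{\pi(i g_2\partial_\theta)v,v}$, which is bounded above by $\mu_X\norm{v}^2$ precisely by the quantum energy inequality \eqref{eqn: QEI} applied to $Y = i g_2\partial_\theta$ with $g_2 \ge 0$ (and $\mu_Y = \mu_X$ by \eqref{eqn: muX}). Adding, and using Lemma~\ref{lem: equality unbounded operators} to justify that $\pi(X)$ really is the closure of $\pi(g_1\partial_\theta) + \pi(ig_2\partial_\theta)$ on a common core of smooth vectors for $L_0$, gives $\operatorname{Re}\ip{\pi(X)v,v} \le \mu_X\norm{v}^2$ on that core. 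A standard approximation argument extends the inequality from the core to all of $D(\pi(X))$: given $v \in D(\pi(X))$, pick $v_k$ in the core with $v_k \to v$ and $\pi(X)v_k \to \pi(X)v$, and pass to the limit in the inequality.

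With both dissipativity estimates in hand for the common constant $\omega := \mu = \mu_X \ge 0$, Lemma~\ref{lem: lumer phillips} applies verbatim: $\pi(X)$ generates a $C_0$-semigroup $T(t)$ with $\norm{T(s)} \le e^{\mu s}$, and $\rho(\pi(X)) \supset (\mu,\infty)$ (and the same for $\pi(X)^* = \pi(X^*)$, though only the statement for $\pi(X)$ is asserted here). I expect the main subtlety to be the interplay between the algebraic identity $\pi(X) = \pi(g_1\partial_\theta) + \pi(ig_2\partial_\theta)$ as \emph{unbounded} operators --- i.e., being careful that the sum on the right, a priori only defined as an operator sum, has closure equal to $\pi(X)$ and that the common core of $L_0$-smooth vectors is invariant under each summand so that the self-adjointness/skew-adjointness bookkeeping is legitimate --- which is exactly what Lemma~\ref{lem: equality unbounded operators} is designed to supply. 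The quantum energy inequality \eqref{eqn: QEI} itself is quoted as an external input, so no work is needed there beyond checking that its hypotheses ($g_2 \in C^\infty(S^1)$, $g_2 \ge 0$) are met and that the constant matches $\mu_X$.
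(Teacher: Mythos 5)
Your proposal is correct and follows essentially the same route as the paper: verify dissipativity of $\pi(X)$ and $\pi(X)^*$ with constant $\mu_X$ by reducing $\operatorname{Re}\ip{\pi(X)u,u}$ on the core $W$ to $\ip{\pi(i\operatorname{Im}f\,\partial_\theta)u,u}$ (your real/imaginary split with skew/self-adjointness bookkeeping is just a rephrasing of the paper's identity $\operatorname{Re}\ip{\pi(X)u,u}=\tfrac12\ip{(\pi(X)+\pi(X)^*)u,u}$ via \eqref{eqn: virasoro adjoint}), bound it by the Fewster--Hollands inequality \eqref{eqn: QEI}, extend from the core to the full domains, and invoke Lemma~\ref{lem: lumer phillips}. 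Your observation that the adjoint case follows because $X^*\in\cX^{in}$ with $\mu_{X^*}=\mu_X$ is exactly the paper's ``same argument'' step, so there is nothing to add.
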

\begin{proof}
We have $X = f \partial_\theta$ with $\operatorname{Im} f \ge 0$.
For a finite-energy vector $u \in W$
\[
\operatorname{Re} \ip {\pi(X)u,u} = \tfrac12 \ip{(\pi(X) + \pi(X)^*)u,u} = \ip{\pi(i\operatorname{Im}f\partial_\theta) u,u},
\]
with the final equality a consequence of \eqref{eqn: virasoro adjoint}.
Hence by the Fewster-Hollands bound \eqref{eqn: energy bound} (and notation \eqref{eqn: muX}) we have
\[
\operatorname{Re} \ip{\pi(X)u,u} \le \mu_X \norm{u}^2
\]
for all $u \in W$, and since $W$ is a core for $\pi(X)$ (by definition), this inequality holds for all $u \in D(\pi(X))$.
The same argument implies
\[
\operatorname{Re} \ip{\pi(X)^*u,u} \le \mu_X \norm{u}^2
\]
for all $u \in D(\pi(X)^*)$, where we note that $W$ is a core for $\pi(X)^*$ by \eqref{eqn: virasoro adjoint}.
By Lemma~\ref{lem: lumer phillips}, the operators $\pi(X)$ and $\pi(X)^*$ generate $C_0$-semigroups satisfying the indicated growth condition on the norm, and the resolvent sets contain $(\mu,\infty)$.
\end{proof}

\begin{lem}[$H_1$] \label{lem: H1} \pounds{37}
Let $X \in \cP^{in}$ and let $n \in \bbZ_{\ge 0}$.
Then for every $t \in [0,1]$, the restriction of $\pi(X(t))$ to $\cH_n$ generates a $C_0$-semigroup $T_t(s)$ of bounded operators on $\cH_n$. 
Moreover, there exists $\omega \ge 0$ such that $\norm{T_t(s)}_{n \to n} \le e^{\omega s}$ for all $t \in [0,1]$, and the resolvent set of every $\pi(X(t))|_{\cH_n}$ contains the interval $(\omega,\infty)$.
\end{lem}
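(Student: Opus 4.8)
The plan is to bootstrap from the case $n=0$ — which is exactly Lemma~\ref{lem: semigroup generated by inward pointing vector field} — by conjugating with the isometric isomorphism $Q^n := (1+L_0)^n \colon \cH_n \to \cH$ and absorbing the resulting commutator term into a bounded perturbation. Fix $t$ and abbreviate $A = \pi(X(t))$; let $W^\infty := \bigcap_{m \ge 1} D(L_0^m)$ be the common invariant core, which is preserved by $A$, by $L_0$, and hence by $Q$ and by $Q^{-1} = (1+L_0)^{-1}$.

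First I would record the telescoping identity, valid on $W^\infty$,
\[
Q^n A Q^{-n} \;=\; A \;+\; \sum_{k=0}^{n-1} Q^k\,[L_0,A]\,Q^{-k-1}.
\]
Since each $Q^{\pm j}$ is an isometry $\cH_{i\pm j}\to\cH_i$ and $[L_0,A]$ is bounded $\cH_{k+1}\to\cH_k$ by \eqref{eqn: commutator energy bound}, every summand extends to a bounded operator on $\cH = \cH_0$, so the sum defines a bounded operator $B_n(t)$ on $\cH$ with $\norm{B_n(t)}_{\cH\to\cH} \le nC\norm{X(t)}_{n+3/2}$, $C$ being the constant of \eqref{eqn: energy bound}. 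As $W^\infty$ is a core for $A$ and $B_n(t)$ is bounded, the closure of $Q^n A|_{W^\infty} Q^{-n}$ on $\cH$ is $C_n(t) := A + B_n(t)$. Now $A$ generates a $C_0$-semigroup on $\cH$ of norm $\le e^{\mu_{X(t)}s}$ by Lemma~\ref{lem: semigroup generated by inward pointing vector field}, so the bounded perturbation theorem \cite[Thm. 3.1.1]{Pazy} shows that $C_n(t)$ generates a $C_0$-semigroup $\hat T_t(s)$ on $\cH$ with $\norm{\hat T_t(s)} \le e^{(\mu_{X(t)} + \norm{B_n(t)})s}$; transporting through $Q^n$ then gives a $C_0$-semigroup $T_t(s) := Q^{-n}\hat T_t(s)Q^n$ of bounded operators on $\cH_n$ with the same norm bound. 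Because $t\mapsto X(t)$ is a smooth path into $\cX^{in}$, the map $t\mapsto B_n(t)\in\cB(\cH)$ is continuous, and $\sup_{t\in[0,1]}\mu_{X(t)}<\infty$ by \eqref{eqn: bound of mu}; hence $\omega := \max\!\bigl(0,\ \sup_{t\in[0,1]}(\mu_{X(t)} + \norm{B_n(t)})\bigr)$ is finite and $\norm{T_t(s)}_{n\to n}\le e^{\omega s}$ for all $t$. Finally, a generator of a $C_0$-semigroup dominated by $e^{\omega s}$ has resolvent set containing $(\omega,\infty)$, and the resolvent set is unchanged under the conjugation $C_n(t)\mapsto Q^{-n}C_n(t)Q^n$, so the resolvent assertion follows once the generator of $T_t(s)$ has been pinned down.

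Pinning down that generator is the one delicate point, and the step I expect to require the most care: I must verify that the generator of $T_t(s)$ — namely $v\mapsto Q^{-n}C_n(t)Q^n v$, with domain $\{v\in\cH_n : Q^n v\in D(A)\}$ — is exactly the part of $\pi(X(t))$ in $\cH_n$, i.e.\ ``the restriction of $\pi(X(t))$ to $\cH_n$'' in the sense of the discussion preceding Theorem~\ref{thm: Pazy}. One inclusion is a short closed-graph argument: given $v$ in the domain of the generator, choose a sequence in $W^\infty$ converging to $Q^n v$ along which $A$ converges in $\cH$, apply the bounded operator $Q^{-n}$, and use the identity $A = Q^{-n}C_n(t)Q^n$ on $W^\infty$ to see that $A$ converges along the images; closedness of $A$ on $\cH$ then yields $v\in D(A)$ with $Av = Q^{-n}C_n(t)Q^n v\in\cH_n$, so $v$ lies in the domain of the part of $A$ in $\cH_n$ and the two operators agree on it. For the reverse inclusion, observe that for $\lambda>\omega\ge\mu_{X(t)}$ the operator $\lambda-\pi(X(t))|_{\cH_n}$ is injective, since any kernel vector lies in $\ker(\lambda-A)=\{0\}$ by Lemma~\ref{lem: semigroup generated by inward pointing vector field}; as $\lambda$ minus the generator is already surjective and the generator is contained in $\pi(X(t))|_{\cH_n}$, the two domains coincide. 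I would state this generator-identification as a short auxiliary observation, since the same mechanism is needed when verifying $(H_1)_n$ after passing to the central extension.
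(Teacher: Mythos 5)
Your proof is correct, and it rests on the same two pillars as the paper's: the $n=0$ case supplied by Lemma~\ref{lem: semigroup generated by inward pointing vector field}, and the observation that conjugation by powers of $Q=1+L_0$ changes $\pi(X(t))$ only by a perturbation that is bounded thanks to the commutator energy bound \eqref{eqn: commutator energy bound}. The packaging differs, though. The paper argues by induction on $n$: at each step it conjugates by a single power of $Q$, views the difference $B(t)=Q^{-1}\pi(X(t))Q-\pi(X(t))$ as a bounded perturbation on $\cH_{n+1}$, and invokes Pazy's stable-family theorems \cite[Thms.~5.2.2, 5.2.3]{Pazy} to get both the semigroup bound $e^{\omega_{n+1}s}$ with $\omega_{n+1}=\omega_n+K$ and the resolvent statement in one stroke; this recursive form of the constant is what is quoted later in Remark~\ref{rem: explicit norm estimates}. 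You instead conjugate by $Q^n$ in one shot, write the difference as the telescoping sum $\sum_{k=0}^{n-1}Q^k[L_0,A]Q^{-k-1}$ (each term bounded on $\cH_0$ by \eqref{eqn: commutator energy bound} and the isometries $Q^{\pm j}$), apply the elementary bounded-perturbation theorem \cite[Thm.~3.1.1]{Pazy} on $\cH_0$, and transport the semigroup back to $\cH_n$; your resulting constant $\mu_{X(t)}+\sum_k C\norm{X(t)}_{k+5/2}$ is essentially the paper's $\omega_n$ unwound. What your route buys is two things: it avoids the stable-family machinery entirely, and it makes explicit the generator identification --- that the generator of the transported semigroup is precisely the part of $\pi(X(t))$ in $\cH_n$ --- via the core/closedness argument for one inclusion and the injectivity-plus-surjectivity argument at $\lambda>\omega$ for the other; the paper's inductive step uses this identification implicitly. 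The only cosmetic slips are attributing the constant to \eqref{eqn: energy bound} rather than \eqref{eqn: commutator energy bound} (they happen to coincide) and the index $n+3/2$ in your bound, which is correct only because the largest summand has $k+5/2\le n+3/2$; neither affects the argument.
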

\begin{proof}
We proceed by induction on $n$. First consider the base case $n=0$.
By Lemma~\ref{lem: semigroup generated by inward pointing vector field}, each $\pi(X(t))$ generates a $C_0$-semigroup on $\cH_0$ such that $\norm{T_t(s)} \le e^{\mu_{X(t)} s}$, and the resolvent sets $\rho(\pi(X(t)))$ contain $(\mu_{X(t)}, \infty)$.
By\eqref{eqn: bound of mu} we have $\max_{0 \le t \le 1} \mu_{X(t)} < \infty$, which furnishes our desired $\omega$.

We proceed to the inductive step, assuming the statement of the lemma holds for some fixed $n \in \bbZ_{\ge 0}$, and a particular constant $\omega_n$.
Let $Q=1+L_0$ regarded as a unitary operator $\cH_{n+1} \to \cH_n$, so each operator $Q^{-1}\pi(X(t))Q$ generates a $C_0$-semigroup $\tilde T_t$ on $\cH_{n+1}$ satisfying $\norm{\tilde T_t(s)} \le e^{\omega_n s}$, and the resolvent sets of these operators again contain $(\omega,\infty)$.
By \cite[Thm. 5.2.2]{Pazy}, the operators $Q^{-1}\pi(X(t))Q$ form a stable family (in the sense of \cite[Def. 5.2.1]{Pazy}) with stability constants $M=1$ and $\omega_n$.
Arguing as in Lemma~\ref{lem: H2}, the operator
\[
B(t):=Q^{-1}\pi(X(t))Q - \pi(X(t)) = Q^{-1}[\pi(X(t)), Q]
\]
is bounded $\cH_{n+1} \to \cH_{n+1}$ with
\[
\norm{B(t)} \le \norm{X}_{n+5/2}.
\]
Hence $K=\max_{0 \le t \le 1} \norm{B(t)} < \infty$, and by \cite[Thm. 5.2.3]{Pazy} the operators $\pi(X(t))|_{\cH_n}$ generate $C_0$-semigroups on $\cH_{n+1}$, and are a stable family on $\cH_{n+1}$ with stability constants $M=1$ and $\omega_{n+1}:=\omega_n + K$.
Thus by \cite[Thm. 5.2.2]{Pazy}, the semigroups generated by the operators $\pi(X(t))|_{\cH_{n+1}}$ on $\cH_{n+1}$ have norms bounded by $e^{\omega_{n+1} s}$, and the resolvent sets $\rho(\pi(X(t)))$ on $\cH_{n+1}$ contain $(\omega_{n+1}, \infty)$.
\end{proof}

We have proven the main result of the section:

\begin{thm}\label{thm:time ordered exponential of framing exists}
Let $W$ be a unitary positive energy representation of $\Vir_c$, not necessarily irreducible, let $\cH$ be the Hilbert space completion of $W$, and let $\pi$ be the corresponding representation of $\cX_c(S^1)$ as closed unbounded operators on $\cH$.
Let $X \in \cP^{in}$ be a smooth path in the cone $\cX^{in}$.
Then the time-ordered exponential
\[
\prod_{1\ge t \ge 0} \Exp(\pi(X(t)))dt
\]
exists.
The same holds with $\cH$ replaced by $\cH_n=D((1+L_0)^n)$, equipped with the Hilbert space norm $\norm{v}_{\cH_{n}} = \norm{(1+L_0)^n v}.$
\end{thm}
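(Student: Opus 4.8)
The plan is to invoke Theorem~\ref{thm: Pazy} (the Pazy generation theorem for evolution systems) with the chain of Hilbert spaces $\cH_0 \supset \cH_1 \supset \cH_2 \supset \cdots$, where $\cH_n = D((1+L_0)^n)$, and the family of generators $A(t) = \pi(X(t))$. By Definition~\ref{defn: time ordered exponential}, the time-ordered exponential $\prod_{1\ge t\ge 0}\Exp(\pi(X(t)))dt$ is, by definition, the evolution system $U(t,s)$ produced by that theorem applied on $\bbX=\cH_0$ with $\bbY=\cH_1$; so the content of the statement is precisely that the hypotheses $(H_1)$, $(H_2^+)$, $(H_3)$ hold. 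To also obtain the time-ordered exponential on each $\cH_n$, I verify $(H_1)_n$, $(H_2^+)_n$, $(H_3)_n$ for the pair $(\bbX=\cH_n, \bbY=\cH_{n+1})$ for every $n\ge 0$, as set up in the discussion preceding the theorem; then the evolution system on $\cH_n$ is the restriction of the one on $\cH_0$ (cf.\ Remark~\ref{rem: evolution system on subspace}).

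The verification is now essentially a matter of assembling the three lemmas already proved in this section. Condition $(H_1)_n$ is exactly Lemma~\ref{lem: H1}: for $X\in\cP^{in}$ and each $n$, the restriction $\pi(X(t))|_{\cH_n}$ generates a $C_0$-semigroup $T_t(s)$ on $\cH_n$ with $\norm{T_t(s)}_{n\to n}\le e^{\omega s}$ uniformly in $t\in[0,1]$, and the resolvent sets $\rho(\pi(X(t))|_{\cH_n})$ contain $(\omega,\infty)$. Condition $(H_2^+)_n$ is Lemma~\ref{lem: H2}, with the Banach space isomorphisms $Q(t):=Q=1+L_0:\cH_{n+1}\to\cH_n$ taken independent of $t$ (so $t\mapsto Q(t)v$ is trivially $C^1$), and with $B(t)=Q\pi(X(t))Q^{-1}-\pi(X(t))$ a strongly continuous family of bounded operators on $\cH_n$ by that lemma. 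Condition $(H_3)_n$ is Lemma~\ref{lem: H3}: $\pi(X(t))$ is bounded $\cH_{n+1}\to\cH_n$ and $t\mapsto\pi(X(t))$ is norm-continuous $[0,1]\to\cB(\cH_{n+1},\cH_n)$; in particular $\cH_{n+1}\subset D(\pi(X(t))|_{\cH_n})$. (All three of these rest ultimately on the Goodman--Wallach energy bounds \eqref{eqn: energy bound} and \eqref{eqn: commutator energy bound} and the Fewster--Hollands quantum energy inequality \eqref{eqn: QEI}, packaged into the estimate \eqref{eqn: bound of mu} and Lemma~\ref{lem: semigroup generated by inward pointing vector field}.) With $(H_1)_n$, $(H_2^+)_n$, $(H_3)_n$ in hand for all $n$, Theorem~\ref{thm: Pazy} yields an evolution system $U(t,s)$ on $\cH_0$ and, by the chain-of-spaces discussion, its restrictions $U(t,s)|_{\cH_n}$ form the evolution system on each $\cH_n$; these are the asserted time-ordered exponentials.

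Since all the real work has been front-loaded into Lemmas~\ref{lem: semigroup generated by inward pointing vector field}, \ref{lem: H2}, \ref{lem: H3}, and \ref{lem: H1}, there is no genuine obstacle left here; the proof is a bookkeeping argument. The only point requiring a word of care is matching the precise bookkeeping of Theorem~\ref{thm: Pazy}: one must note that the uniformity of $\omega$ in $(H_1)_n$ across $t\in[0,1]$ is what the theorem demands (and this is exactly what Lemma~\ref{lem: H1} delivers), that taking $Q(t)$ independent of $t$ makes the continuous-differentiability clause of $(H_2^+)_n$ automatic, and that $(H_3)_n$ provides the inclusion $\bbY\subset D(A(t))$ needed. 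One then concludes by applying the theorem once on $(\cH_0,\cH_1)$ to get $U(t,s)$, and observing via Remark~\ref{rem: evolution system on subspace} (and \cite[Lem.~5.4.4]{Pazy}) that restricting to $\cH_n$ recovers the evolution system generated by $\pi(X(t))|_{\cH_n}$, so that the time-ordered exponential exists on every $\cH_n$ simultaneously and compatibly.
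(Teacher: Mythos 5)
Your proposal is correct and follows essentially the same route as the paper's own proof: both verify $(H_1)_n$, $(H_2^+)_n$, $(H_3)_n$ for each pair $(\cH_n,\cH_{n+1})$ by citing Lemmas~\ref{lem: semigroup generated by inward pointing vector field}, \ref{lem: H1}, \ref{lem: H2}, and \ref{lem: H3}, and then invoke Theorem~\ref{thm: Pazy}. Your additional remarks on taking $Q(t)$ constant and on restriction via Remark~\ref{rem: evolution system on subspace} are consistent with the paper's setup and add only harmless bookkeeping detail.
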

\begin{proof}
We have shown that the operators $\pi(X(t))|_{\cH_n}$ satisfy the hypotheses of Theorem~\ref{thm: Pazy} for the space $\bbX=\cH_n$ and $\bbY=\cH_{n+1}$.
They generate $C_0$-semigroups by Lemma~\ref{lem: semigroup generated by inward pointing vector field}.
These semigroups satisfy the condition $(H_1)$ by Lemma~\ref{lem: H1}, $(H_2^+)$ by Lemma~\ref{lem: H2}, and condition $(H_3)$ by Lemma~\ref{lem: H3}.
Thus by Theorem~\ref{thm: Pazy} there exists a unique evolution system satisfying conditions $(E_1)$-$(E_5)$ of the theorem statement, and the time-ordered exponential exists.
\end{proof}

\begin{rem}\label{rem: explicit norm estimates}
    In the proof of Lemma~\ref{lem: H1}, if $X \in \cX_{c}^{in}$ the condition $(H_1)_0$ is established for the family of operators $\{\pi(X(t))\}_{0 \le t \le 1}$ with $\omega = \max_{0 \le x \le 1} \mu_X$.
    Thus by \eqref{eqn: bound of mu} then the bound $(E_1)$ becomes
    \begin{equation}\label{eqn: norm bound in terms of sobolev}
    \norm{\prod_{t \ge \tau \ge s} \Exp(\pi(X(\tau))) d\tau}_{\cB(\cH_0)} \le e^{C(t-s) \max_{\tau}( \norm{X_{\theta\theta}(\theta,\tau)}_{L^2(S^1)})} 
    \end{equation}
    for some universal constant $C$.\footnote{More precisely, if we write $X(\tau)=f(\theta,\tau)\partial_\theta$, then the expression $X_{\theta\theta}(\theta,\tau)$ in the right hand side of \eqref{eqn: norm bound in terms of sobolev} refers to the second derivative $f_{\theta\theta}(\theta,\tau)$.}
    From the same proof we have the recurrence $\omega_{n+1} \le \omega_n + \max_{\tau} \norm{X(\tau)}_{n+5/2}$, and so for $n\ge1$
    \begin{equation}\label{eqn: norm bound in terms of sobolev on Hn}
    \norm{\prod_{t \ge \tau \ge s} \Exp(\pi(X(\tau))) d\tau}_{\cB(\cH_n)} \le e^{C(t-s) \max_\tau \norm{X(\tau)}_{n+5/2}}.
    \end{equation}
\end{rem}

\begin{rem}\label{rem: diff in families}
 In light of Remark~\ref{rem: explicit norm estimates}, we note that if $U \subset \bbR^k$ is open and $\{X_p\}_{p \in U}$ is a smooth family in $\cP^{in}$, then the operators $\pi(X_p(t))$ locally satisfy the uniformity condition (i) of Lemma~\ref{lem: HORSE}. The conditions (ii) and (iii) of that lemma follow from the energy bound \eqref{eqn: energy bound}, and so we may differentiate $\partial_p \prod_{1\ge t \ge 0} \Exp(\pi(X_p(t)))dt$ in $\cB(\cH_2,\cH_0)$ using the formula \eqref{eqn: differentiating family of evolution systems}.
\end{rem}

\section{Representation of the semigroup of annuli}

Let $(W,\pi)$ be a unitary positive energy representation of the Virasoro algebra $\Vir_c$.
The usual Hilbert space completion of $W$ is denoted $\cH$, and the completions of $W$ with the respect to the norms $v\mapsto\|(1+L_0)^n v\|$ are denoted $\cH_n$, so that we have a sequence of dense inclusions
\[
\cH=\cH_0 \supset \cH_1 \supset \cH_2  \supset \cH_3  \supset \ldots \supset W.
\]
The goal of this section is to integrate the above representation to a representation
\begin{equation}\label{eq: pi for annc}
\pi:\tAnn_c\to B(\cH)
\end{equation}
of the semigroup of annuli by bounded operators on the Hilbert space $\cH$.
We will show that this representation leaves each of the subspaces $\cH_n$ invariant, and that it is compatible with the dagger operation in the sense that $\langle \pi(A) \xi, \eta \rangle_{\cH} = \langle \xi, \pi(A^\dagger) \eta \rangle_{\cH}$.

For  $h:S^1\times[0,1]\to A$ a framing of some annulus $A$, 
and $X(t) := (-h_t/h_\theta)|_{S^1\times \{t\}}$,
recall that
\(
z\cdot\prod_{1\ge t\ge 0} \mathrm{Exp}(X(t)) dt
\)
represents a lift of $A$ to $\tAnn_c$.
We will define \eqref{eq: pi for annc} by setting
\begin{equation}\label{eq: Definition of the action}
\pi\left(z\cdot\!\!\prod_{1\ge t \ge 0} \Exp(X(t))dt\right)
\,\,:=\,\,
z\cdot\!\!\prod_{1\ge t \ge 0} \Exp\big(\pi(X(t))\big)dt,
\end{equation}
where the time-ordered exponential on the right-hand side exists by Theorem~\ref{thm:time ordered exponential of framing exists}.
The fact the prescription \eqref{eq: Definition of the action} is well-defined is the main result of this section.
We begin with two technical lemmas about framings.

\begin{lem}\label{lem: SPADE}
\pounds{55 $\spadesuit$}
Let $h:S^1 \times [0,1] \to  A$ be a framing of a partially thin annulus $A$. Then $h$ can be homotoped through framings to one which is a diffeomorphism in the bulk (i.e. a diffeomorphism from $h^{-1}(\mathring A) \to \mathring A$).
\end{lem}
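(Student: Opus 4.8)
The plan is to reduce the claim to a local statement near the ``pinch locus'' of $A$, namely the set $P := \partial_{in}A \cap \partial_{out}A$, and then fix up the framing one region at a time. First I would analyze the structure of a framing $h:S^1\times[0,1]\to A$ of a partially thin annulus. On the open part $U := \{\theta : \varphi_{in}(\theta) \notin \partial_{out}A\} \subset S^1$, the curves $\varphi_{in}(\theta)$ and $\varphi_{out}(\theta)$ are genuinely separated, so $A$ is ``thick in the $\theta$-direction'' there, and the level sets $h(S^1 \times \{t\})$ sweep across a genuinely two-dimensional region; failure of $h$ to be a local diffeomorphism in the bulk can only come from the non-negative Jacobian vanishing on a closed subset of $S^1\times(0,1)$, i.e. where $h_t$ is parallel to $h_\theta$. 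The idea is to modify the $t$-parametrization fiberwise: since each $h|_{S^1\times\{t\}}$ is an embedding (so $h_\theta\neq 0$ everywhere) and the Jacobian is $\geq 0$, one can reparametrize $t \mapsto \rho(\theta,t)$ so that the ``speed'' $h_t$ transverse to the level curves never degenerates except exactly over the pinch locus, where it is forced to.

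The key steps, in order, would be: (1) Identify $P$ and note it is a closed subset of $S^1$; over a neighborhood of $P$ in $S^1$, the annulus looks (after an $\cO_A$-isomorphism to an embedded model) like a ``bigon''-type region whose two boundary arcs touch, and I would write down an explicit model framing there that is a diffeomorphism in the bulk — e.g. in a half-disc model, $h(\theta,t) = (\text{arc coordinate}, t\cdot(\text{width function}(\theta)))$, which is a bulk diffeomorphism precisely because the width is positive in the interior. (2) Away from $P$, $A$ is a thick annulus over the relevant $\theta$-range, and any framing there can be straightened to a diffeomorphism by a standard fiberwise-reparametrization argument (Moser-type / flowing along a transverse vector field), controlling the $t$-speed from below. (3) Patch the two modifications with a partition of unity in the $\theta$-variable, checking that the interpolation stays within framings: the constraints to maintain are that each $h|_{S^1\times\{t\}}$ remains an embedding, the Jacobian stays $\geq 0$, and the boundary conditions $h|_{t=0}=\varphi_{in}$, $h|_{t=1}=\varphi_{out}$ are preserved — all of which are convex-ish conditions stable under small perturbation and under convex combination of the $t$-reparametrizations. (4) Assemble these fiberwise modifications into a genuine homotopy through framings connecting $h$ to the straightened framing, by running the reparametrization continuously from the identity.

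The main obstacle I expect is step (3): ensuring that the patched-together map is \emph{simultaneously} a bulk diffeomorphism, has non-negative Jacobian everywhere, and has embedded level curves, precisely across the transition zone where the ``bigon model'' near $P$ meets the ``thick annulus model'' away from $P$. One must be careful that the straightening flow used away from $P$ does not push level curves into $\partial_{out}A$ prematurely, and that near $P$ the level curves still limit correctly onto the pinch points; I would handle this by choosing the transverse straightening vector field to vanish to high order at $P$ and to agree with the model near it, so that the global interpolation is supported away from the delicate locus. A secondary technical point is regularity: the framing $h$ need only be smooth, and the reparametrizations must preserve smoothness up to and including the boundary circles (where $\cO_A$ demands smoothness), so the cutoff functions and the straightening flow should be chosen smooth in all variables including $t\in[0,1]$.
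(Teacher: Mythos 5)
There is a genuine gap in the central mechanism. A framing can fail to be a bulk diffeomorphism in ways that no reparametrization $t\mapsto\rho(\theta,t)$ can repair: replacing $t$ by $\rho(\theta,t)$ only multiplies the component of $h_t$ transverse to the level curve $h(S^1\times\{t\})$ by $\partial\rho/\partial t\ge 0$, so any interior point where $h_t$ is tangent to the level curve remains degenerate after reparametrization. Worse, the level circles may stall (coincide as sets for a whole interval of $t$-values) while the $\theta$-parametrization slides along them; then $h$ is not even injective on the corresponding strip, and again no $t$-reparametrization helps. What is missing is an adjustment of the $\theta$-parametrization so that the ``vertical'' curves $t\mapsto h(\theta,t)$ become transverse to the family of level circles. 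This is exactly the paper's key step: it chooses a smooth foliation $F$ of $A$ transverse to the (only continuous) line field tangent to the level curves, first homotopes $h$ through the framings $h_u(\theta,t)=h(\alpha_{t,u}(\theta),t)$ so that each vertical segment lies in a leaf of $F$, and only afterwards reparametrizes in $t$ (constant speed along the leaves). Your proposal has no analogue of this $\theta$-adjustment, so your steps (2) and (4) do not go through as stated.

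The pinch-locus decomposition is also weaker than it looks. The set $P=\partial_{in}A\cap\partial_{out}A$ is merely a closed subset of $S^1$ (it can be a Cantor set), so a neighborhood of $P$ need not split into finitely many bigon-like pieces admitting an explicit ``width function'' model; and since the lemma asks you to homotope the \emph{given} framing $h$ through framings, writing down a nice model framing near $P$ still leaves the problem of connecting it to $h$. A further regularity point: the tangent line field of the level curves is in general only continuous (this is why the paper smooths its orthogonal complement before integrating it to a foliation), so your ``transverse straightening vector field vanishing to high order at $P$'' would need the same smoothing care. The foliation argument of the paper treats the pinch locus and the thick part uniformly, with no partition of unity and none of the delicate matching that you yourself identify as the main obstacle in step (3).
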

\begin{proof}

For every point $z\in A$, pick a preimage $(\theta,t)$ under $h$, and let $v_z\in T_zA$ be the image of $\partial_\theta$ under the map $dh:T_{(\theta,t)}(S^1\times [0,1])\to T_zA$.
The vector $v_z$ is independent of $(\theta,t)$ up to a scalar, and the assignment $z\mapsto \bbR v_z$ defines a continuous (but typically not smooth) distribution of lines.

Pick a smooth foliation $F$ of $A$ which is transverse to the above distribution of lines. Such a foliation can be for example obtained by smoothing the orthogonal to the above distribution, and then integrating it to a foliation of $A$. Let $F_\theta\subset A$ denote the leaf of $F$ that intersects $\partial_{in}A$ at $h(\theta,0)$.

For each $t$, the map $\theta\mapsto F_\theta\cap h(S^1\times\{t\})$ is a diffeomorphism $S^1\to h(S^1\times\{0\})$, hence defines a diffeomorphism $\alpha_t:S^1\to S^1$.
Moreover, $\alpha_t$ depends smoothly on $t$. Let $u\mapsto \alpha_{t,u}:[0,1]\to\Diff(S^1)$  be the `straight line homotopy' connecting $\alpha_t$ to $\id_{S^1}$.

Then $\{h_u:S^1 \times [0,1] \to  A\}_{u\in [0,1]}$ given by $h_u(\theta,t) = h(\alpha_{t,u}(\theta),t)$ is a one parameter family of framings interpolating between our original framing $h$ and a new framing $h_1$. This new framing has the property that $h_1(\{\theta\}\times [0,1]) = F_\theta$.

The images under $h_1$ of the intervals $\{\theta\}\times[0,1]$ are given by the leaves of $F$, so
we may finish the proof by performing a straight line homotopy in the $[0,1]$ direction between our map $h_1$ and the constant speed parametrizations $\{\theta\}\times[0,1] \to F_\theta$.
This connects $h_1$ to a new framing which is a diffeomorphism in the bulk.
\end{proof}

\begin{lem}\label{lem: DIAMOND}
\pounds{56 $\diamondsuit$}
Let $A$ be a partially thin annulus. And let $h_0,h_1: S^1 \times [0,1] \to A$ be framings. 
Then $h_0$ and $h_1$ are homotopic through framings. \end{lem}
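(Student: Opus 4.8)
\emph{Plan.} I would use Lemma~\ref{lem: SPADE} to reduce to framings that are diffeomorphisms in the bulk, then connect any two such by a homotopy, treating separately the part of $A$ near the overlap $\partial_{in}A\cap\partial_{out}A$ --- where the framing is essentially rigid and where partial thinness gets used --- and the complementary ``genuinely thick'' part, where a standard straightening argument applies. Since ``homotopic through framings'' is an equivalence relation (homotopies of framings may be reversed and concatenated), it suffices to construct one homotopy of framings from $h_0$ to $h_1$. By Lemma~\ref{lem: SPADE}, and inspecting its proof, I may first replace each $h_i$ by a homotopic framing which is a diffeomorphism in the bulk and which carries each interval $\{\theta\}\times[0,1]$ with positive speed onto a leaf, running from $\varphi_{in}(\theta)$ to $\varphi_{out}(\theta)$, of a smooth foliation of $A$ transverse to $\partial A$. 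So I will assume from now on that $h_0$ and $h_1$ both have this form.

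Next I would normalise the framings near the overlap. Let $J\subseteq S^1$ be the closed set of parameters $\theta$ with $\varphi_{in}(\theta)\in\partial_{in}A\cap\partial_{out}A$; partial thinness says $J\neq\emptyset$. For $\theta\in J$ the leaf through $\varphi_{in}(\theta)$ joins two points of $\partial_{in}A\cap\partial_{out}A$, and the key point is that near such $\theta$ the framings $h_0,h_1$ are forced to agree up to a fibrewise reparametrisation: the bulk of $A$ collapses onto $\partial A$ near the overlap, leaving no room for the leaves to differ as sets. Concretely I would homotope each $h_i$ so that over a neighbourhood of $J$ it equals one fixed standard framing; this homotopy stays within framings because, once the leaf is pinned down as a set, the only remaining freedom is a positive reparametrisation of $[0,1]$, which is removed by a straight-line homotopy in $t$ as in the final step below. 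It is exactly here that partial thinness is essential: for a thick annulus $J=\emptyset$ and the residual ambiguity between two framings is a Dehn twist, which cannot be undone through framings, so the lemma would be false in that case.

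After this normalisation $h_0$ and $h_1$ agree over a neighbourhood of $J$ and differ only over its complement, where the relevant piece of $A$ is an honest thick sub-annulus-with-corners on which $h_0,h_1$ restrict to bulk diffeomorphisms agreeing along the full boundary of the strip. Having ``cut out'' the overlap region, this strip no longer wraps around, so the transition diffeomorphism $h_1^{-1}\circ h_0$ --- which fixes the boundary of the strip --- is isotopic to the identity rel boundary by an Alexander-type argument. Lifting this isotopy through framings reduces me to the case where $h_0$ and $h_1$ carry each $\{\theta\}\times[0,1]$ onto a common leaf, say via parametrisations $r_\theta^0,r_\theta^1:[0,1]\to A$; then $\sigma_\theta:=(r_\theta^0)^{-1}\circ r_\theta^1$ is an increasing diffeomorphism of $[0,1]$ fixing its endpoints, depending smoothly on $\theta$, and
\[
h_u(\theta,t):=h_0\bigl(\theta,\,(1-u)t+u\,\sigma_\theta(t)\bigr)
\]
is a homotopy through framings from $h_0$ to $h_1$: it is smooth, equals $\varphi_{in}$ on $S^1\times\{0\}$ and $\varphi_{out}$ on $S^1\times\{1\}$, has nonnegative Jacobian determinant since $\partial_t\bigl[(1-u)t+u\,\sigma_\theta(t)\bigr]>0$, and is a slice-wise embedding, being $h_0$ precomposed with a graph embedding into $S^1\times[0,1]$.

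\emph{Main obstacle.} The crux is the normalisation step near $\partial_{in}A\cap\partial_{out}A$: showing that there any framing is, up to homotopy through framings and fibrewise reparametrisation, a fixed standard one. This is precisely where partial thinness is used (contrast the thick case, obstructed by a Dehn twist), and it has to be carried out with care because near the overlap $A$ need not be a manifold with corners --- indeed $\mathring A$ can be a disc rather than an annulus --- so the framings degenerate there and the transition-map reasoning used for the complementary region does not apply directly.
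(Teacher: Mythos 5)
Your overall architecture matches the paper's: reduce via Lemma~\ref{lem: SPADE} to framings that are diffeomorphisms in the bulk, force the two framings to agree near the thin part $\partial_{in}A\cap\partial_{out}A$, and then finish on the complementary disc-like regions with the classical fact that diffeomorphisms of a $2$-disc are isotopic (your Alexander/strip step), plus a final fibrewise straight-line reparametrisation; your remark that the statement fails for thick annuli because of the Dehn twist is also correct and explains the partial-thinness hypothesis. However, the middle step -- the one you yourself call the ``main obstacle'' -- is a genuine gap, and the justification you sketch for it does not hold. You assert that near the overlap the framings are \emph{forced} to agree up to fibrewise reparametrisation because ``the bulk collapses onto $\partial A$, leaving no room for the leaves to differ as sets.'' That rigidity is true only for $\theta$ whose leaf is trapped deep inside the thin arc; it fails exactly where you need it, namely for $\theta$ near the frontier of $J$, where the bulk opens up at the endpoints of the thin arc and the leaf from $\varphi_{in}(\theta)$ to $\varphi_{out}(\theta)$ can enter the two-dimensional bulk in genuinely different ways for $h_0$ and $h_1$. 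So the normalisation ``over a neighbourhood of $J$'' is not a consequence of rigidity, and no alternative argument is supplied; as written, the proof is incomplete at its crucial point.

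The paper avoids having to prove any rigidity by exploiting the freedom in the Lemma~\ref{lem: SPADE} construction instead: one \emph{chooses} the auxiliary foliations $F_0$ and $F_1$ (transverse to the line distributions of $h_0$ and $h_1$ respectively) so that, by a partition-of-unity interpolation, they \emph{coincide} on a neighbourhood of the thin part. Since the straightened framings produced by the proof of Lemma~\ref{lem: SPADE} are determined by the foliation together with $\varphi_{in}$ (leaf through $\varphi_{in}(\theta)$, constant-speed parametrisation), the resulting $h_0'$ and $h_1'$ automatically agree near the thin part, with no claim that the original framings were standard there. If you replace your rigidity assertion by this ``choose the foliations to agree near the thin part'' device, the rest of your argument (isotopy rel boundary on the finitely many disc regions, then the fibrewise reparametrisation homotopy, which is fine as you wrote it) goes through and essentially reproduces the paper's proof.
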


\begin{proof}
As in the proof of the previous lemma, let $F_0$ and $F_1$ be smooth foliations of $A$ which are transverse to the one dimensional distributions associated to $h_0$ and $h_1$.
By a partition of unity argument, we may furthermore arrange that $F_0$ and $F_1$ agree on a neighborhood of the thin part of $A$.
By the previous lemma, there then exist framings $h_0'$ and $h_1'$ which are diffeomorphisms in the bulk, along with one parameter families of framings connecting $h_0$ to $h_0'$, and $h_1$ to $h_1'$.
Moreover, by construction, $h_0'$ and $h_1'$ agree in a neighborhood of the thin part of $A$.

It remains to connect $h_0'$ and $h_1'$ by a one parameter families of framings.
As $h_0'$ and $h_1'$ agree in a neighborhood of the thin part of $A$, we just need to worry about the part of $A$ outside that neighborhood.
That part is contained inside the union of finitely many 2-discs, and we may finish the proof by applying the classical result that any two diffeomorhpisms of a 2-disc are isotopic. 
\end{proof}

\begin{lem}\label{lem: well defined on equivalence classes}
\pounds{53}
Let $A$ be an annulus,  let $\{h_s:S^1\times[0,1]\to A\}_{s\in [0,1]}$ be a $1$-parameter family of framings, and
let $X := -(\partial h/\partial t)  /  (\partial h/ \partial \theta)$ and $Y := -(\partial h/\partial s)  /  (\partial h/ \partial \theta)$.
We then have
\begin{equation}\label{eqn: differ by cocycle}
\prod_{1\ge t \ge 0}\mathrm{Exp}\big(\pi(X_{1,t}) dt\big)
=
\mathrm{exp}\Big(\iint_{0 \le s,t \le 1}  \omega (X_{s,t}, Y_{s,t}) dt\Big) \Big[ \prod_{1 \ge t \ge 0}\mathrm{Exp}\big(  \pi (X_{0,t}) dt\big)\Big]
\end{equation}
\end{lem}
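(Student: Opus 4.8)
The plan is to reduce the statement to the differentiation formula for families of evolution systems (Lemma~\ref{lem: HORSE}, available here in the form of Remark~\ref{rem: diff in families}), and then to identify the resulting derivative with the Virasoro cocycle term. Write $V(s) := \prod_{1 \ge t \ge 0}\mathrm{Exp}(\pi(X_{s,t})\,dt)$, the time-ordered exponential attached to the path $t \mapsto X_{s,t} \in \cX^{in}$; by Theorem~\ref{thm:time ordered exponential of framing exists} each $V(s)$ exists as a bounded operator on $\cH$, and indeed on each $\cH_n$. Since $\{h_s\}$ is a smooth family of framings, $\{t \mapsto X_{s,t}\}_{s \in [0,1]}$ is a smooth family in $\cP^{in}$, so by Remark~\ref{rem: diff in families} the map $s \mapsto V(s)$ is differentiable as a map $[0,1] \to \cB(\cH_2, \cH_0)$, with
\[
\partial_s V(s) \;=\; \int_0^1 U_s(1,x)\,\pi(\partial_s X_{s,x})\,U_s(x,0)\,dx,
\]
where $U_s(t,x) = \prod_{t \ge \tau \ge x}\mathrm{Exp}(\pi(X_{s,\tau}))\,d\tau$, so that $U_s(1,0) = V(s)$.

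The next step is to massage $\pi(\partial_s X_{s,x})$. The key point is a Lie-algebra identity relating the two vector fields $X = -h_t/h_\theta$ and $Y = -h_s/h_\theta$ built from a framing: differentiating the defining ratios and using equality of mixed partials of $h$, one obtains $\partial_s X_{s,t} - \partial_t Y_{s,t} = [X_{s,t}, Y_{s,t}]$ as an identity of (complexified) vector fields on $S^1$ — this is exactly the flatness/Maurer--Cartan condition for the connection on $S^1 \times [0,1]$ given by $(X,Y)$, and it is the analogue of the computation in the Remark following Definition~\ref{defn: tAnnc} identifying $-k^*\underline\omega$ with $\omega(h_t/h_\theta, h_u/h_\theta)\,dt\,du$. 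Applying $\pi$ (and using Lemma~\ref{lem: equality unbounded operators} to pass between $\pi([X,Y])$ and $[\pi(X),\pi(Y)]$, and between $\pi(\partial_s X)$ and the corresponding limit of sums), we get, now as an identity in $\cB(\cH_3,\cH_0)$ acting on the dense domain,
\[
\pi(\partial_s X_{s,t}) \;=\; \partial_t \pi(Y_{s,t}) + [\pi(X_{s,t}), \pi(Y_{s,t})] + \omega(X_{s,t}, Y_{s,t}),
\]
the last scalar term coming from the Virasoro cocycle in $\cX_c(S^1)$ (note $\pi(X+Y) = \pi(X)+\pi(Y)$ only up to this central scalar when we track the central extension). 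Substituting into the integral, the terms $\partial_t \pi(Y) + [\pi(X),\pi(Y)]$ combine: using $(E_2)$ and $(E_3)$ one checks that $\partial_x\big(U_s(1,x)\,\pi(Y_{s,x})\,U_s(x,0)\big) = U_s(1,x)\big(\partial_x \pi(Y_{s,x}) + [\pi(X_{s,x}),\pi(Y_{s,x})]\big)U_s(x,0)$, so this part of the integral telescopes to $U_s(1,1)\pi(Y_{s,1})U_s(1,0) - U_s(1,0)\pi(Y_{s,0})U_s(0,0) = \pi(Y_{s,1})V(s) - V(s)\pi(Y_{s,0})$. Because $h_s(\theta,0) = \varphi_{in}$ and $h_s(\theta,1) = \varphi_{out}$ are independent of $s$, we have $\partial_s h_s = 0$ at $t = 0,1$, hence $Y_{s,0} = Y_{s,1} = 0$, so this entire boundary contribution vanishes. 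What remains is
\[
\partial_s V(s) \;=\; \left(\int_0^1 \omega(X_{s,t}, Y_{s,t})\,dt\right) V(s).
\]

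Finally, this is a scalar linear ODE for $s \mapsto V(s)$ in $\cB(\cH_2,\cH_0)$: writing $\lambda(s) := \int_0^1 \omega(X_{s,t},Y_{s,t})\,dt$, which is continuous in $s$, we conclude $V(1) = \exp\!\big(\int_0^1 \lambda(s)\,ds\big)\,V(0)$, i.e.
\[
\prod_{1 \ge t \ge 0}\mathrm{Exp}(\pi(X_{1,t})\,dt) = \exp\!\Big(\iint_{0 \le s,t \le 1}\omega(X_{s,t},Y_{s,t})\,dt\,ds\Big)\prod_{1 \ge t \ge 0}\mathrm{Exp}(\pi(X_{0,t})\,dt),
\]
which is \eqref{eqn: differ by cocycle} (the double integral is written $\iint \omega(X_{s,t},Y_{s,t})\,dt$ there, with $ds$ suppressed). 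The main obstacle I expect is the bookkeeping in the telescoping/integration-by-parts step: one must be careful that $\pi(Y_{s,x})$ is only bounded $\cH_{n+1}\to\cH_n$, so all manipulations should be carried out on the scale of spaces $\cH_n$ with $v \in W$ (or $v \in \cH_3$) fixed and the identities verified on that dense domain before extending by boundedness, exactly as in the proof of Lemma~\ref{lem: HORSE}; and one must justify the vector-field identity $\partial_s X - \partial_t Y = [X,Y] + \omega(X,Y)$ carefully, tracking the central term, since it is the crux of where the cocycle enters.
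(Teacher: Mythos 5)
Your proposal is essentially the paper's own proof: differentiate $s\mapsto V(s)$ via Lemma~\ref{lem: HORSE}/Remark~\ref{rem: diff in families}, rewrite $\pi(\partial_s X_{s,t})$ using the Maurer--Cartan identity of Lemma~\ref{lem: NECESSARY} (which introduces the cocycle), recognize the non-central part as $\partial_x$ of $U_s(1,x)\,\pi(Y_{s,x})\,U_s(x,0)$ so that it integrates to zero because $Y_{s,t}$ vanishes at $t=0,1$, and then solve the scalar ODE $\partial_s V(s)=\lambda(s)V(s)$; the domain bookkeeping (verify on $\cH_3$, extend to $\cH_2$, conclude in $\cB(\cH_0)$) is also the paper's. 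The only slip is the sign of the commutator in your two intermediate identities: with the paper's conventions one has $\pi(\partial_s X_{s,t})=\partial_t\pi(Y_{s,t})-[\pi(X_{s,t}),\pi(Y_{s,t})]+\omega(X_{s,t},Y_{s,t})$, and by $(E_2)$/$(E_3)$ one gets $\partial_x\big(U_s(1,x)\,\pi(Y_{s,x})\,U_s(x,0)\big)=U_s(1,x)\big(\partial_x\pi(Y_{s,x})-[\pi(X_{s,x}),\pi(Y_{s,x})]\big)U_s(x,0)$, whereas you wrote $+[\pi(X),\pi(Y)]$ in both places; since the same wrong sign appears in both the substitution and the telescoping step, the errors cancel and the final formula \eqref{eqn: differ by cocycle} (including the sign of the cocycle term) is unaffected.
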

\begin{proof}
By uniqueness of solutions for 1st order ODEs, if $f'(r) =  \lambda (r) f(r)$, then $f(r) = \mathrm{Exp}(  \int_{0 \le s \le r} \lambda (s)) f(0)$. Certainly we know this in the one-dimensional case, and the (infinite dimensional) vector-valued case follows from the one-dimensional case by pairing against linear functionals. 
Thus, it suffices to prove:
\[
\partial_s \Big(  \prod_{1 \ge t \ge 0}\mathrm{Exp}\big(  \pi (X_{s,t}) dt\big) \Big) =
\int_{0 \le t \le 1}  \omega (X_{s,t}, Y_{s,t}) dt   \cdot   \Big[ \prod_{1 \ge t \ge 0}\mathrm{Exp}(  \pi (X_{s,t}) dt)\Big]
\]
We will show this identity as operators $\cH_2 \to \cH_0$, which will then impliy that \eqref{eqn: differ by cocycle} holds when the relevant operators are applied to vectors in $\cH_2$.
As both sides are bounded operators on $\cH_0$, the lemma will follow.

We expand (as operators $\cH_2 \to \cH_0$):
\begin{align*}
&\,\partial_s \Big(  \prod_{1 \ge t \ge 0}\mathrm{Exp}\big(  \pi (X_{s,t}) dt\big) \Big)
\\
&= \int_{0 \le x \le 1}  \prod_{1 \ge t \ge x} \mathrm{Exp}(  \pi (X_{s,t}) dt)  \big[  \partial_s   \pi (X_{s,t}) \big]\big|_{t=x}  \prod_{x \ge t \ge 0} \mathrm{Exp}(  \pi (X_{s,t}) dt)\quad \scriptstyle\text{(by Lemma~\ref{lem: HORSE}/Remark~\ref{rem: diff in families}})\hspace{1.1cm}
\\
&=  \int_{0 \le x \le 1}  \prod_{1 \ge t \ge x} \mathrm{Exp}\big(  \pi (X_{s,t}) dt\big)  \big[   \pi ( \partial_s X_{s,t}) \big]\big|_{t=x}  \prod_{x \ge t \ge 0} \mathrm{Exp}\big(  \pi (X_{s,t}) dt\big)
\end{align*}\vspace{-7mm}
\begin{align*}
&=
\int_{0 \le x \le 1}  \prod_{1 \ge t \ge x} \mathrm{Exp}\big(  \pi (X_{s,t}) dt\big)  \big[   \pi ( \partial_t Y_{s,t} -[X_{s,t},Y_{s,t}]_{\scriptscriptstyle\mathrm{Witt}})   \big]\big|_{t=x}  \prod_{x \ge t \ge 0} \mathrm{Exp}\big(  \pi (X_{s,t}) dt\big)
\quad \scriptstyle\text{(by Lemma~\ref{lem: NECESSARY})}
\\
&= 
\int_{0 \le x \le 1}  \prod_{1 \ge t \ge x} \mathrm{Exp}\big(  \pi (X_{s,t}) dt\big)  \big[   \pi ( \partial_t Y_{s,t}) - [  \pi (X_{s,t}),   \pi (Y_{s,t})]
\\[-3mm]
&
\phantom{=\int_{0 \le x \le 1}  \prod_{1 \ge t \ge x} \mathrm{Exp}\big(  \pi (X_{s,t}) dt\big)  \big[   \pi ( \partial_t Y_{s,t})} +  \omega (X_{s,t},Y_{s,t})  \big]\big|_{t=x}  \prod_{x \ge t \ge 0} \mathrm{Exp}\big(  \pi (X_{s,t}) dt\big)
\\
&\stackrel{?}{=}
\int_{0 \le t \le 1}    \omega (X_{s,t},Y_{s,t}) dt  \prod_{1 \ge t \ge 0} \mathrm{Exp}\big(  \pi (X_{s,t}) dt\big)
\end{align*}
The equality indicated with a question mark holds so long as the following operator (from $\cH_2$ to $\cH_0$) is zero:
\begin{equation}\label{eqn: needs to vanish}
\int_{0 \le x \le 1} \prod_{1 \ge t \ge x} \Exp\big( \pi(X_{s,t}) \, dt\big) \big(\pi(\partial_t Y_{s,t}) - [\pi(X_{s,t}),\pi(Y_{s,t})]\big)\big|_{t=x} \prod_{x \ge t \ge 0} \Exp\big(\pi(X_{s,t}) \, dt\big).
 \end{equation}
We will show that \eqref{eqn: needs to vanish} holds by arguing that the integrand is the derivative of a quantity which vanishes at $x=0$ and at $x=1$:
\begin{align}\label{eqn: intermediate step derivative}
&\partial_x \Big( \prod_{1 \ge t \ge x} \Exp\big( \pi(X_{s,t}) \, dt \big) \pi(Y_{s,t})\big|_{t=x} \prod_{x\ge t \ge 0} \Exp\big( \pi(X_{s,t}) \, dt \big) \Big)
\\\nonumber&=
\prod_{1 \ge t \ge x} \Exp\big( \pi(X_{s,t}) \, dt\big) \big(\pi(\partial_t Y_{s,t}) - [\pi(X_{s,t}),\pi(Y_{s,t})]\big)|_{t=x} \prod_{x \ge t \ge 0} \Exp\big(\pi(X_{s,t}) \, dt\big)
\end{align}
(as $Y_{s,t}$ vanishes identically at $t=0$ and $t=1$).
More precisely, we compute the derivative on the left-hand side of \eqref{eqn: intermediate step derivative} after applying to an arbitrary vector in $v \in \cH_3$\footnote{
For $v \in \cH_3$, function $x \mapsto  \prod_{x\ge t \ge 0} \Exp\big( \pi(X_{s,t}) \, dt \big)v$ is differentiable in $\cH_2$, the function $x \mapsto \pi(Y_{s,x})$ is differentiable in $\cB(\cH_2,\cH_1)$, and for $v \in \cH_1$ the function $x \mapsto \prod_{1 \ge t \ge x} \Exp\big( \pi(X_{s,t}) \, dt\big)v$ is differentiable in $\cH_0$.
As all operators are uniformly bounded in norm, the usual product rule can be invoked.
}.
As operators $\cH_3 \to \cH_0$,
we have:
\begin{align*}
&\,\partial_x \Big( \prod_{1 \ge t \ge x} \Exp\big( \pi(X_{s,t}) \, dt\big) \pi(Y_{s,t})|_{t=x} \prod_{x \ge t \ge 0} \Exp\big(\pi(X_{s,t}) \, dt\big) \Big)\\
=&-
\prod_{1 \ge t \ge x} \Exp\big( \pi(X_{s,t}) \, dt\big) \pi(X_{s,t})|_{t=x}\pi(Y_{s,t})|_{t=x} \prod_{x \ge t \ge 0} \Exp\big(\pi(X_{s,t}) \, dt\big)\\
&+
\prod_{1 \ge t \ge x} \Exp\big( \pi(X_{s,t}) \, dt\big) \pi(Y_{s,t})|_{t=x}\pi(X_{s,t})|_{t=x} \prod_{x \ge t \ge 0} \Exp\big(\pi(X_{s,t}) \, dt\big)\\
&+
\prod_{1 \ge t \ge x} \Exp\big( \pi(X_{s,t}) \, dt\big) \pi(\partial_t Y_{s,t})|_{t=x} \prod_{x \ge t \ge 0} \Exp\big(\pi(X_{s,t}) \, dt\big)\\
=&\phantom{+}
\prod_{1 \ge t \ge x} \Exp\big( \pi(X_{s,t}) \, dt\big) \big(\pi(\partial_t Y_{s,t}) - [\pi(X_{s,t}),\pi(Y_{s,t})]\big)|_{t=x} \prod_{x \ge t \ge 0} \Exp\big(\pi(X_{s,t}) \, dt\big).
\end{align*}
Hence \eqref{eqn: needs to vanish} vanishes on $\cH_3$, and thus by continuity on all of $\cH_2$, as required.
\end{proof}

\begin{lem}\label{lem: NECESSARY}
\pounds{54}
Let $A$ be an annulus, and let $\{h_s:S^1\times[0,1]\to A\}_{s\in [0,1]}$ be a $1$-parameter family of framings.
Let $X := -(\partial h/\partial t)  /  (\partial h/ \partial \theta)$ and $Y := -(\partial h/\partial s)  /  (\partial h/ \partial \theta)$, which we rewrite as $X := -h_t / h_\theta$, $Y := -h_s / h_\theta$.
Then 
\[
\partial_t Y - \partial_s X = Y \partial_\theta X - X\partial_\theta Y,
\]
which we rewrite as  $Y_t - X_s = [X,Y]_{\mathrm{Witt}}$.
\end{lem}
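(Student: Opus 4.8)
The plan is to reduce the identity to a computation in local coordinates, where it becomes the statement that certain mixed partial derivatives of $h$ agree. Since $X$ and $Y$ are complex vector fields on $S^1$ depending on the parameters $t,s\in[0,1]$, the asserted equality may be checked pointwise in $(\theta,t,s)$. So first I would fix such a point and choose a holomorphic coordinate near $h(\theta,t,s)\in A$ --- in the bulk an honest chart $V\subset\mathring A$ biholomorphic to an open subset of $\CC$, and near a boundary point the ambient $\CC$ of an embedding $A\hookrightarrow\CC$ (recall that $A$ is isomorphic to an embedded annulus). In such a coordinate $h$ becomes a smooth $\CC$-valued function of $(\theta,t,s)$ with $h_\theta:=\partial h/\partial\theta$ nowhere vanishing (the framing condition), and the intrinsically defined ratios of Definition~\ref{defn: tAnnc} become the ordinary ratios $X=-h_t/h_\theta$, $Y=-h_s/h_\theta$: indeed $\partial h/\partial\theta$ sends $w\,\partial_\theta$ to $w\,h_\theta\,\partial_z$, so the preimage of $h_t=\partial h/\partial t$ is $(h_t/h_\theta)\partial_\theta$, and this is chart-independent because a holomorphic change of coordinate multiplies numerator and denominator by the same nonzero factor.

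Granting this, the proof is a direct differentiation. In $\partial_t Y-\partial_s X=-\partial_t(h_s/h_\theta)+\partial_s(h_t/h_\theta)$ the two second-order terms $-h_{st}/h_\theta$ and $-h_{ts}/h_\theta$ cancel by equality of mixed partials, leaving $(h_s h_{\theta t}-h_t h_{\theta s})/h_\theta^2$. On the other side, $\partial_\theta X=-h_{t\theta}/h_\theta+h_t h_{\theta\theta}/h_\theta^2$ and likewise for $\partial_\theta Y$, so in $Y\,\partial_\theta X-X\,\partial_\theta Y$ the terms containing $h_{\theta\theta}$ cancel, again leaving $(h_s h_{t\theta}-h_t h_{s\theta})/h_\theta^2$. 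The two expressions coincide. To read this off as $\partial_t Y-\partial_s X=[X,Y]_{\mathrm{Witt}}$ one only needs the sign convention of the paper, namely $[f\partial_\theta,g\partial_\theta]_{\mathrm{Witt}}=(f'g-fg')\partial_\theta$ --- the opposite of the standard Lie bracket of vector fields, as forced by $[L_m,L_n]=(m-n)L_{m+n}+\cdots$ --- applied with $f,g$ the coefficient functions of $X,Y$.

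I do not expect a serious obstacle: the statement is essentially the zero-curvature (Maurer--Cartan) equation for the $\mathrm{Witt}$-valued one-form $X\,dt+Y\,ds$, and its proof is the elementary calculation sketched above. The only points deserving care are bookkeeping ones: (a) verifying that the intrinsic definition of $h_t/h_\theta$ really does agree with the coordinate ratio, and that a holomorphic coordinate (or the ambient embedding) is available also at points of $S^1\times[0,1]^2$ that $h$ maps into $\partial A$, so that the pointwise computation is legitimate everywhere; and (b) keeping track of the sign in $[\,\cdot,\cdot\,]_{\mathrm{Witt}}$. Neither affects the structure of the argument.
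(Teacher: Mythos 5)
Your proposal is correct and is essentially the paper's argument: a direct computation showing both $\partial_t Y-\partial_s X$ and $Y\partial_\theta X-X\partial_\theta Y$ equal $h_\theta^{-2}(h_s h_{t\theta}-h_t h_{s\theta})$, with the mixed-partial and $h_{\theta\theta}$ terms cancelling. The preliminary remarks about working in a holomorphic coordinate and the sign convention for $[\cdot,\cdot]_{\mathrm{Witt}}$ are consistent with what the paper leaves implicit.
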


%
%
%
%
%

\begin{proof}
As
$
X_s = -(h_t / h_\theta )_s = h_\theta^{-2}(h_t h_{s \theta} - h_{st} h_\theta)$,
$Y_t = -(h_s / h_\theta )_t = h_\theta^{-2}(h_s h_{t\theta} - h_{st} h_\theta )$,
we have $Y_t - X_s = 
h_\theta^{-2}(h_sh_{t\theta} - h_t h_{s\theta})$.
It follows that
\begin{align*}
[X,Y] &= YX_\theta - XY_\theta 
\\&=
(h_s / h_\theta )\cdot(h_t / h_\theta )_\theta -
(h_t / h_\theta )\cdot(h_s / h_\theta )_\theta
\\&=
(h_s / h_\theta )\cdot (h_{t\theta} h_\theta   - h_t h_{\theta\theta} )/h_\theta^{2}
-
(h_t / h_\theta)\cdot (h_{s\theta} h_\theta  - h_s h_{\theta\theta} )/h_\theta^{2} 
\\&=
h_\theta^{-2}(h_sh_{t\theta} - h_th_{s\theta})\\&=Y_t - X_s.\qedhere
\end{align*}
\end{proof}

\begin{thm}\label{thm: representation exists}
Let $W$ be a unitary positive energy representation of the Virasoro algebra $\Vir_c$, and let $\cH$ be the Hilbert space completion of $W$.
Then \eqref{eq: Definition of the action} defines a $*$-representation of $\tAnn_c$ on $\cH$ by bounded operators. Moreover, each of the subspaces $\cH_n\subset \cH$ is invariant under that action.
\end{thm}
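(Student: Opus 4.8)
The plan is to establish, one at a time: (a) that the right-hand side of \eqref{eq: Definition of the action} depends only on the class of $(A,h,z)$ in $\tAnn_c$; (b) that $\pi$ is multiplicative; (c) that it intertwines $\dagger$ with the Hilbert-space adjoint; and (d) that it preserves each $\cH_n$. Existence of the time-ordered exponential, on $\cH$ and on every $\cH_n$, is handed to us by Theorem~\ref{thm:time ordered exponential of framing exists}, once we note that the path $X(t)=(-h_t/h_\theta)|_{S^1\times\{t\}}$ attached to a framing lies in $\cP^{in}$ — this is the identification of the positive cone of \cite{HenriquesTener24ax} with $\cX^{in}$. Since time-ordered exponentials are invariant under reparametrisation of the parameter interval, and the equivalence relation of $\tAnn_c$ permits rescalings of $[0,t_0]$, we may assume framings are defined on $S^1\times[0,1]$ throughout. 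Point (d) is then immediate: Theorem~\ref{thm:time ordered exponential of framing exists} produces the evolution system on each $\cH_n$, and by Remark~\ref{rem: evolution system on subspace} it is the restriction of the one on $\cH_0$, so $\pi(A)$ maps $\cH_n$ into $\cH_n$.

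For (a), given two triples $(A,h,z)$ and $(A,h',z')$ representing the same point of $\tAnn_c$, I would first invoke that any two framings of a fixed annulus are homotopic through framings — Lemma~\ref{lem: DIAMOND} in the partially thin case, and a routine straightening argument for thick annuli — to choose a homotopy $\{h_s\}_{s\in[0,1]}$ with fields $X_{s,t}=-\partial_t h_s/\partial_\theta h_s$, $Y_{s,t}=-\partial_s h_s/\partial_\theta h_s$. Then Lemma~\ref{lem: well defined on equivalence classes} gives
\[
\prod_{1\ge t\ge 0}\Exp\big(\pi(X_{1,t})\,dt\big)=\exp\!\Big(\iint_{0\le s,t\le 1}\omega_{Vir}(X_{s,t},Y_{s,t})\,dt\,ds\Big)\;\prod_{1\ge t\ge 0}\Exp\big(\pi(X_{0,t})\,dt\big),
\]
and the scalar on the right is exactly the cocycle factor relating $z$ and $z'$ in the equivalence relation \eqref{eq: construction of central extension of Ann(S)} of Definition~\ref{defn: tAnnc} (matching the two requires a short orientation-bookkeeping check, tracking the sign that enters through the $1-t$ in that definition). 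Multiplying by $z$ and $z'$ respectively, the two instances of \eqref{eq: Definition of the action} agree, so $\pi$ descends to $\tAnn_c$.

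For (b) I would use the evolution-system composition law $U(t,r)U(r,s)=U(t,s)$: given representatives $(A,h,z)$, $(A',h',z')$, first use (a) to replace $h,h'$ by homotopic framings whose concatenation $h\cup h'$ is smooth; the field attached to $h\cup h'$ is the concatenation of those attached to $h$ and $h'$, so its time-ordered exponential factors — in the order dictated by the time-ordering convention — as the product of the two time-ordered exponentials, which with the scalar $zz'$ is $\pi$ of the product in $\tAnn_c$. For (c), formula \eqref{eqn: dagger of exponential of path} gives $\pi(A^\dagger)=\bar z\prod_{1\ge t\ge 0}\Exp\big(\pi(-\overline{X(1-t)})\big)\,dt$; writing $X(t)=f(\cdot,t)\partial_\theta$, the field $-\overline{X(1-t)}=-\overline{f(\cdot,1-t)}\partial_\theta$ again has non-negative imaginary part, so $t\mapsto-\overline{X(1-t)}$ lies in $\cP^{in}$ and the operators $B(t):=\pi(-\overline{X(1-t)})=\pi(X(1-t))^*$ (the last equality by \eqref{eqn: virasoro adjoint}) satisfy the hypotheses of Theorem~\ref{thm: Pazy} by the estimates of Section~\ref{sec: estimates for Virasoro}. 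Lemma~\ref{lem: adjoint of time ordered exponential}, applied to $A(t)=\pi(X(t))$ and this $B(t)$, then yields $\prod_{1\ge t\ge 0}\Exp(B(t))\,dt=\big(\prod_{1\ge t\ge 0}\Exp(\pi(X(t)))\,dt\big)^{*}$, so $\pi(A^\dagger)=\pi(A)^{*}$, i.e.\ $\langle\pi(A)\xi,\eta\rangle_{\cH}=\langle\xi,\pi(A^\dagger)\eta\rangle_{\cH}$.

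I expect the main obstacle to be the orientation/sign bookkeeping in step (a): making the cocycle scalar produced by Lemma~\ref{lem: well defined on equivalence classes} match, on the nose, the ratio of central-extension parameters in Definition~\ref{defn: tAnnc}. The remaining subtleties — that $h\cup h'$ must first be homotoped to a smooth framing, that the common-core arguments of Section~\ref{sec: estimates for Virasoro} are available for the daggered family $B(t)$, and that framings of thick annuli also form a connected family — are minor. All the genuine analysis has already been absorbed into Theorem~\ref{thm:time ordered exponential of framing exists} and Lemmas~\ref{lem: well defined on equivalence classes}, \ref{lem: DIAMOND}, and \ref{lem: adjoint of time ordered exponential}, so what is left is assembly.
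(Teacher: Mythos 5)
Your proposal is correct and follows essentially the same route as the paper: well-definedness via Lemma~\ref{lem: DIAMOND} and Lemma~\ref{lem: well defined on equivalence classes}, multiplicativity from concatenation of framings (which the paper dismisses as immediate), the $*$-property via \eqref{eqn: dagger of exponential of path}, \eqref{eqn: virasoro adjoint} and Lemma~\ref{lem: adjoint of time ordered exponential}, and invariance of $\cH_n$ from Theorem~\ref{thm:time ordered exponential of framing exists} together with Remark~\ref{rem: evolution system on subspace}. The extra details you flag (the cocycle sign bookkeeping, smoothing the concatenated framing, checking that the daggered path again lies in $\cP^{in}$) are exactly the points the paper leaves implicit, and your handling of them is sound.
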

\begin{proof}
Recall from Definition~\ref{defn: tAnnc} that elements of $\tAnn_c$ are equivalence classes of triples $(A,h,z)$ where $h$ is a framing of the annulus $A$ and $z \in \bbC^\times$ is a complex number.
The formula \eqref{eq: Definition of the action} assigns an operator to such a triple, and it is the content of Lemmas~\ref{lem: DIAMOND} and \ref{lem: well defined on equivalence classes} that this assignment is independent of equivalence class representative.
It is immediate that this is a semigroup homomorphism. 
    
    We next verify that this is a $*$-representation.
    Indeed:
    \begin{align*}
    \pi\Bigg(\Bigg[\prod_{1\ge t\ge 0} \mathrm{Exp}(X(t)) dt{}\Bigg]^\dagger
\Bigg)
&=\pi\Bigg(\prod_{1\ge t\ge 0} \mathrm{Exp}(-\overline{X(1-t)}) dt \Bigg)\\
&= \prod_{1\ge t\ge 0} \mathrm{Exp}(\pi(-\overline{X(1-t)})) dt \\
&= \prod_{1\ge t\ge 0} \mathrm{Exp}(\pi(X(1-t))^* dt \\
&= \pi\Bigg(\prod_{1\ge t\ge 0} \mathrm{Exp}(X(t)) dt{}
\Bigg)^* .
    \end{align*}
The first equality is \eqref{eqn: dagger of exponential of path}, the second is the definition of the action of $\tAnn_c$, the 
    third is \eqref{eqn: virasoro adjoint}, and the fourth is Lemma~\ref{lem: adjoint of time ordered exponential} and the definition of the action of $\tAnn_c$.

    Finally, the operators
    \(
    \pi\Big(\prod_{1\ge t\ge 0} \mathrm{Exp}(X(t)) dt\Big)
    \)
    leave the subspace $\cH_n \subset \cH$ invariant by Theorem~\ref{thm:time ordered exponential of framing exists} (see also Remark~\ref{rem: evolution system on subspace}).
\end{proof}


Given an annulus $A$, a framing $h:S^1\times [0,1]\to A$, and $X:=-h_t / h_\theta$, by \cite[Lem~A.8]{HenriquesTener24ax}, holomorphic vector fields on $A$ correspond to functions $f$ on $S^1\times [0,1]$ that satisfy 
\begin{equation}\label{eqn: holo vector field}
f_t = X_\theta f - X f_\theta.
\end{equation}

\begin{defn} \pounds{43}
Let $X:[0,1] \to \cX^{in}$ be a smooth path (i.e. $X \in \cP^{in}$). An operator $T \in \cB(\cH)$ satisfies the \emph{Segal commutation relations} for $X$ if for every function $f$ on $S^1 \times [0,1]$ satisfying \eqref{eqn: holo vector field}  the following equation holds
\begin{equation}\label{eq: SCM}
T \circ \pi(f|_{S^1 \times \{0\}}) = \pi (f|_{S^1 \times \{1\}}) \circ T
+\big(\textstyle\int_0^1 \omega(X(\tau),f(\tau))d\tau\big)\cdot T
\end{equation}
as operators $\cH_1 \to \cH_0$.
\end{defn}

\begin{prop}\pounds{45}\pounds{44}
Let $W$ be a unitary positive energy representation of the Virasoro algebra, and let $\cH$ be its Hilbert space completion. If $X \in \cP^{in}$,
then $$\prod_{1\ge \tau\ge 0} \mathrm{Exp}\big(\pi(X(\tau))d\tau\big)\in \cB(\cH)$$  satisfies the Segal commutation relations for $X$.
\end{prop}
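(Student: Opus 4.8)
The plan is to write $T := \prod_{1\ge\tau\ge0}\mathrm{Exp}(\pi(X(\tau))d\tau) = U(1,0)$, where $U(t,s)$ is the evolution system generated by the family $\pi(X(\tau))$ (which exists and restricts to each $\cH_n$ by Theorem~\ref{thm:time ordered exponential of framing exists}), and to run the usual ``differentiate along the evolution system'' argument. Given $f$ on $S^1\times[0,1]$ satisfying \eqref{eqn: holo vector field}, I would consider the operator-valued function $G(x) := U(1,x)\,\pi(f|_{S^1\times\{x\}})\,U(x,0)$ on $[0,1]$, observe that $G(0) = T\circ\pi(f|_{S^1\times\{0\}})$ and $G(1) = \pi(f|_{S^1\times\{1\}})\circ T$ since $U(0,0)=U(1,1)=1$, and aim to prove that $\partial_x G(x) = -\omega(X(x),f(x))\cdot T$. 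Integrating this from $0$ to $1$ and rearranging then gives \eqref{eq: SCM}.

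For the derivative I would first record two algebraic inputs. The equation \eqref{eqn: holo vector field} says exactly that $\partial_t(f|_{S^1\times\{t\}}) = [X(t),f(t)]_{\mathrm{Witt}}$ (the same computation as in Lemma~\ref{lem: NECESSARY}, since $X_\theta f - X f_\theta = fX_\theta - Xf_\theta$), and Lemma~\ref{lem: equality unbounded operators} together with the defining cocycle relation of the representation gives $[\pi(X(t)),\pi(f(t))] = \pi([X(t),f(t)]_{\mathrm{Witt}}) + \omega(X(t),f(t))$, an identity first valid on the finite-energy vectors $W$ and then on $\cH_3$ by density, as all terms are bounded $\cH_3\to\cH_1$. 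Next, differentiating $G(x)$ using properties $(E_2)$ and $(E_3)$ of Theorem~\ref{thm: Pazy} (for $U(x,0)$ with the pair $(\cH_2,\cH_3)$ and for $U(1,x)$ with the pair $(\cH_0,\cH_1)$), together with the fact that $x\mapsto\pi(f|_{S^1\times\{x\}})$ is $C^1$ as a map to $\cB(\cH_2,\cH_1)$ (from the energy bound \eqref{eqn: energy bound}), I would obtain, applied to $v\in\cH_3$,
\[
\partial_x G(x)v = U(1,x)\Big(\pi(\partial_t f(x)) - [\pi(X(x)),\pi(f(x))]\Big)U(x,0)v = -\omega(X(x),f(x))\,U(1,x)U(x,0)v = -\omega(X(x),f(x))\,Tv .
\]
The one delicate point is the product rule here: exactly as in the proof of Lemma~\ref{lem: HORSE} and the footnotes in Lemma~\ref{lem: well defined on equivalence classes}, this is not a literal Leibniz rule since the vector in the middle also varies with $x$, so one must track which $\cH_n$ each factor and each derivative lands in and use the uniform norm bounds and strong continuity of the evolution system. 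I expect this bookkeeping, rather than any new idea, to be the main obstacle.

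Finally, for fixed $v\in\cH_3$ the map $x\mapsto G(x)v\in\cH_0$ is differentiable with continuous derivative $-\omega(X(x),f(x))Tv$ (continuous because $X$ and $f$ are smooth), so the fundamental theorem of calculus gives $G(1)v-G(0)v = -\big(\int_0^1\omega(X(x),f(x))\,dx\big)Tv$; substituting $G(0)=T\pi(f(0))$ and $G(1)=\pi(f(1))T$ yields \eqref{eq: SCM} applied to $v$. Both sides of \eqref{eq: SCM} are bounded operators $\cH_1\to\cH_0$ (using that $T$ preserves $\cH_1$ by Theorem~\ref{thm:time ordered exponential of framing exists} and that $\pi(f(0)),\pi(f(1)):\cH_1\to\cH_0$ are bounded by \eqref{eqn: energy bound}), and $\cH_3$ is dense in $\cH_1$ since it contains $W$, so the identity extends from $\cH_3$ to all of $\cH_1$, completing the proof.
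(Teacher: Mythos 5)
Your proposal is correct, but it follows a genuinely different mechanism than the paper's proof of this proposition. You differentiate the sandwiched interpolant $G(x)=U(1,x)\,\pi(f|_{S^1\times\{x\}})\,U(x,0)$ on vectors in $\cH_3$ and integrate, reducing everything to the identity $\pi(\partial_t f)-[\pi(X),\pi(f)]=-\omega(X,f)$, which you correctly obtain from \eqref{eqn: holo vector field}, Lemma~\ref{lem: equality unbounded operators} and the cocycle defect of the (non-homomorphic) embedding $\cX(S^1)\to\cX_c(S^1)$; the signs then come out right and the fundamental theorem of calculus gives \eqref{eq: SCM} on $\cH_3$, extended to $\cH_1$ by boundedness and density. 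This is precisely the technique the paper itself uses in Lemma~\ref{lem: well defined on equivalence classes} (the derivative identity \eqref{eqn: intermediate step derivative}, with your $f$ in the role of $Y$), so the product-rule bookkeeping you flag as the main obstacle is handled exactly as in the footnote there and in Lemma~\ref{lem: HORSE}, using $(E_2)$--$(E_5)$ at two consecutive levels of the scale $\cH_n$ together with uniform norm bounds and strong continuity. The paper's own proof of the proposition instead forms $V_{out}(t,s)=\big(\pi(f(t))+\int_s^t\omega(X(\tau),f(\tau))d\tau\big)U(t,s)$ and $V_{in}(t,s)=U(t,s)\pi(f(s))$, shows both lie in $C([s,\infty),\cH_1)\cap C^1([s,\infty),\cH_0)$ and solve the same non-autonomous Cauchy problem $u'=\pi(X(t))u$, $u(s)=\pi(f(s))\xi$ for $\xi\in\cH_2$, and concludes by the uniqueness theorem \cite[Thm. 5.4.3]{Pazy}. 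The trade-off: the paper's route works one level lower in the scale ($\xi\in\cH_2$) and only needs the forward derivative $(E_2)$ plus Pazy's uniqueness statement, while your route needs $\xi\in\cH_3$ and both $(E_2)$ and $(E_3)$, but avoids invoking uniqueness of solutions altogether and reuses a computation already established in the paper; since both sides of \eqref{eq: SCM} are bounded $\cH_1\to\cH_0$ and $W\subset\cH_3$ is dense in $\cH_1$, the loss of one level of regularity is immaterial.
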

\begin{proof}
Set $U(t,s) = \prod_{t \ge \tau \ge s}  \Exp(\pi(X(\tau)))$.
Let $f:S^1\times [0,1]\to \mathbb C$ satisfy \eqref{eqn: holo vector field}, and
\[
V_{out}(t,s) := \big(\pi(f(t))+ \textstyle\int_s^t \omega(X(\tau),f(\tau))d\tau\big) U(t,s),
\quad
V_{in}(t,s) :=  
U(t,s) \pi(f(s)).
\]
For $\xi$ sufficiently regular, we will show that $V_{out}(t,s) \xi$ and $V_{in}(t,s) \xi$ satisfy a common characterizing initial value problem in $t$.
Using \eqref{eqn: holo vector field}, we first observe that, formally,
\[
\tfrac{d}{dt} \pi(f(t)) = \pi\big(\tfrac{d}{dt} f(t)\big) = \pi\big(X(t)' f(t) - X(t) f(t)'\big),
\]
where the prime denotes differentiation in the $\theta$ variable.

Still working formally, we thus have:
\begin{align}
\nonumber \tfrac{d}{dt} V_{out}(t,s)&= \tfrac{d}{dt} \big(\big[\pi(f(t))+ \textstyle\int_s^t \omega(X(\tau),f(\tau))d\tau\big]U(t,s)\big)\\
\nonumber &= \left[\pi\big(X(t)' f(t) - X(t) f(t)'\big)+\omega\big(X(t),f(t)\big)\right] U(t,s) \\\notag&\qquad\qquad\qquad\qquad+
\big(\pi(f(t)) +\textstyle\int_s^t \omega(X(\tau),f(\tau))d\tau\big)[ {\tfrac{d}{dt}} U(t,s) ]
\\
\nonumber &= \pi([X(t),f(t)]_{\Vir})U(t,s)
\\\notag&\qquad\qquad\qquad\qquad+ \big(\pi(f(t))+\textstyle\int_s^t \omega(X(\tau),f(\tau))d\tau\big) \pi(X(t)) U(t,s)\\\nonumber
 &=  \pi(X(t))\big(\pi(f(t))+\textstyle\int_s^t \omega(X(\tau),f(\tau))d\tau\big)  U(t,s)\\
&= \pi(X(t)) V_{out}(t,s)
\label{eqn: segal comm rels 1}
\intertext{and}
\nonumber
\tfrac{d}{dt} V_{in}(t,s) &= \tfrac{d}{dt} \big(
U(t,s) \pi(f(s))\big) \\\nonumber
&= \pi(X(t)) U(t,s) \pi(f(s))
\\&= \pi(X(t)) V_{in}(t,s).
\label{eqn: segal comm rels 2}
\end{align}

We now address analytic aspects of the calculations \eqref{eqn: segal comm rels 1} and \eqref{eqn: segal comm rels 2}.
Fix $\xi \in \cH_2.$
By property $(E_2)$ of Theorem~\ref{thm: Pazy} (applied with Hilbert space pair $(\cH_1,\cH_2)$), the map $t \mapsto U(t,s)\xi$ is differentiable $[s,\infty) \to \cH_1$ with derivative $\pi(X(t))U(t,s)\xi$.
Moreover, $t \mapsto f(t)$ is smooth (with values in $\cX(S^1)$) and $f \mapsto \pi(f)$ is bounded $\cX(S^1) \to \cB(\cH_{n+1},\cH_n)$ by \eqref{eqn: energy bound}.
Hence $t \mapsto V_{out}(t,s)\xi$ is differentiable $[s,\infty) \to \cH_0$ with derivative given by the calculation \eqref{eqn: segal comm rels 1}.
Moreover by property $(E_5)$ of Theorem~\ref{thm: Pazy}, $t \mapsto U(t,s)\xi$ is continuous as a map into $\cH_2$, and it follows that the formula for $\tfrac{d}{dt}V_{out}(t,s)$ defines a continuous function $[s,\infty) \to \cH_0$, which is to say that $V_{out}(t,s)\xi \in C^1([s,\infty),\cH_0)$.
A similar argument shows that $V_{out}(t,s)\xi \in C([s,\infty),\cH_1)$.

On the other hand, for $\xi \in \cH_2$ we have $\pi(f(s))\xi \in \cH_1$, and so by $(E_2)$ and $(E_5)$ of Theorem~\ref{thm: Pazy} we have $V_{in}(t,s)\xi \in C^1([s,\infty), \cH_0) \cap C([s,\infty),\cH_1)$, and the derivative $\tfrac{d}{dt} V_{in}(t,s)\xi$ is given as in \eqref{eqn: segal comm rels 2}.
We now invoke \cite[Thm. 5.4.3]{Pazy} which says that $t \mapsto U(t,s)\pi(f(s))\xi$ is the unique function in $C([s,\infty), \cH_1) \cap C^1([s,\infty),\cH_0)$ satisfying the ODE $\frac{d}{dt} u(t) = \pi(X(t)) u(t)$ with $u(s)=\pi(f(s))\xi$.
Hence we conclude $V_{out}(t,s)\xi = V_{in}(t,s)\xi$ for all $\xi \in \cH_2$.
As both sides are bounded operators $\cH_1 \to \cH_2$, we conclude that the identity holds for all $\xi \in \cH_1$, completing the proof.
\end{proof}

The next proposition is one we only know how to prove for paths $X \in \cP^{in}$ of the form $X=-h_t/h_\theta$, for $h$ a framing of some annulus. These paths are called \emph{geometrically exponentiable}\footnote{\cite[Conj.~1.4]{HenriquesTener24ax} states that 
all paths are geometrically exponentiable.} in the terminology of \cite{HenriquesTener24ax}:

\begin{prop}
Let $W$ be an irreducible unitary positive energy representation of $\Vir_c$, and let $X \in \cP^{in}$ be geometrically exponentiable. Then  $\prod_{1\ge \tau\ge 0} \mathrm{Exp}\big(\pi(X(\tau))d\tau\big)$ is uniquely determined up to a scalar by the Segal commutation relations \eqref{eq: SCM}.
\end{prop}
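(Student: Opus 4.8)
The plan is to show that every $T\in\cB(\cH)$ satisfying the Segal commutation relations for $X$ is a scalar multiple of $T_0:=\prod_{1\ge\tau\ge0}\mathrm{Exp}(\pi(X(\tau))d\tau)$; since $T_0$ itself satisfies these relations by the previous proposition, this is exactly the assertion that the solution is unique up to scalar. Write $X=-h_t/h_\theta$ for $h$ a framing of an annulus $A$, so that $T_0=\pi(A)$ for a suitable lift of $A$ to $\tAnn_c$.

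\textbf{Step 1 (reduction to thick annuli).} Let $A_q=\{q\le|z|\le1\}$ be the round annulus, with its radial framing $h(\theta,t)=q^{1-t}e^{i\theta}$; one computes $X_q(t)=-h_t/h_\theta\equiv(-i\log q)\tfrac{\partial}{\partial\theta}$, corresponding to the linear field $(\log q)\,z\tfrac{\partial}{\partial z}=(\log q)L_0$, on which $\omega_{Vir}$ vanishes (it involves a third $z$-derivative). Hence $\pi(A_q)=\prod_{1\ge t\ge0}\mathrm{Exp}\big((\log q)L_0\big)dt=q^{L_0}$, an injective operator with dense range (as $L_0\ge 0$ on a unitary positive-energy representation). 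Form $C:=A\cup A_q$, gluing $A_q$ onto the incoming side of $A$. Since $A_q$ is thick, $C$ is thick regardless of how degenerate $A$ is: the incoming circle of $C$ is the inner circle of $A_q$, and no identification in the pushout can bring it into contact with $\partial_{out}A$. A one-layer manipulation of the Segal relations — formally identical to the displayed computation in the proof of the previous proposition — shows that if $T$ satisfies the Segal relations for $X$ then $T\pi(A_q)$ satisfies them for (a framing of) $C$, with cocycle the sum of the two pieces, and likewise $T_0\pi(A_q)=\pi(C)$ does. Granting uniqueness for the thick annulus $C$ we get $T\pi(A_q)=\lambda\,T_0\pi(A_q)$ for a scalar $\lambda$, and since $\pi(A_q)=q^{L_0}$ has dense range, $T=\lambda T_0$.

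\textbf{Step 2 (normal form for thick annuli).} A thick annulus $C$ is biholomorphic to some $A_{q'}$, and the two boundary parametrizations become diffeomorphisms of $S^1$, so in $\tAnn_c$ one has $C=A_{g_1}\cup A_{q'}\cup A_{g_2}$ up to a central scalar (choosing a framing of the right-hand side, which is homotopic through framings to that of $C$ by Lemma~\ref{lem: DIAMOND}, so by Lemma~\ref{lem: well defined on equivalence classes} this only rescales $T_0$ and the cocycle, which is harmless). The operators $\pi(A_{g_i})$ are, up to a positive scalar, unitary and in particular invertible, because $A_{g_i}^\dagger=A_{g_i^{-1}}$, $\pi$ is a $*$-representation, and $A_{g_i}\cup A_{g_i^{-1}}$ is the unit. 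Peeling off the two diffeomorphism layers exactly as in Step~1 (using that $\pi(A_{g_i^{-1}})$ satisfies the Segal relations for $A_{g_i^{-1}}$ by the previous proposition), one sees that if $T$ satisfies the Segal relations for $C$ then $\pi(A_{g_1})^{-1}T\pi(A_{g_2})^{-1}$ satisfies them for the radially framed $A_{q'}$. So it suffices to treat $A=A_{q'}$ a round annulus.

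\textbf{Step 3 (the round annulus).} For $A_q$ with its radial framing the cocycle vanishes identically (as above), and taking the holomorphic vector field $f=z^{n+1}\tfrac{\partial}{\partial z}=L_n$ on $A_q$ the Segal relation reads $q^{n}\,TL_n=L_nT$ for all $n\in\bbZ$, as an identity on the finite-energy subspace. Let $\Omega$ be the lowest-energy vector, so $L_n\Omega=0$ for $n\ge1$ and $L_0\Omega=h\Omega$. Applying $q^nTL_n=L_nT$ to $\Omega$ gives $L_n(T\Omega)=0$ for $n\ge1$ and $L_0(T\Omega)=h\,T\Omega$; thus $T\Omega$ is again a lowest-weight vector of weight $h$, and by irreducibility $T\Omega=\lambda\Omega$ for some $\lambda\in\bbC$. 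Then $TL_{-m}=q^{m}L_{-m}T$ for $m\ge1$, so iterating,
\[
T\,L_{-m_1}\cdots L_{-m_k}\Omega=\lambda\,q^{m_1+\cdots+m_k}L_{-m_1}\cdots L_{-m_k}\Omega=\lambda q^{-h}\,T_0\,L_{-m_1}\cdots L_{-m_k}\Omega,
\]
where the last equality uses $T_0=q^{L_0}$ and that this vector has $L_0$-eigenvalue $h+\sum m_j$. Such vectors span a dense subspace of $\cH$ (cyclicity of $\Omega$ under the lowering operators in an irreducible lowest-weight module), and both sides are bounded, so $T=\lambda q^{-h}T_0$. This proves the round-annulus case, and together with Steps~1--2 the proposition.

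\textbf{Expected main obstacle.} The representation-theoretic core (Step~3) is short; the delicate part is the bookkeeping in Steps~1--2, namely checking that the ``peel off a layer'' identities for the Segal commutation relations are valid as equalities of operators on the Sobolev-type chain $\cH_1\to\cH_0$ (using that $\pi(A_q)$ and $\pi(A_{g_i})$ preserve each $\cH_n$ and are norm-bounded there), that the cocycle terms indeed add under gluing of framings, and that the normal-form decomposition of a thick annulus can be arranged with a framing compatible with the radial framing of $A_{q'}$ so that the central scalars absorb all discrepancies.
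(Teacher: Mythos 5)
Your proposal is correct and follows essentially the same strategy as the paper's (sketched) proof: sandwich the unknown operator between auxiliary operators that satisfy the Segal commutation relations by the previous proposition and are injective with dense range, thereby reducing to the standard round annulus, and then use the pulled-back relations $q^{n}TL_n = L_nT$ together with irreducibility (your lowest-weight-vector argument is exactly what the paper compresses into ``since $W$ is irreducible'') to determine the operator up to a scalar. The only difference is organizational: the paper chooses annuli $A_1,A_2$ with $A_1AA_2=r^{\ell_0}$ in one step, whereas you first thicken by gluing on $A_q$ and then uniformize the thick annulus as diffeomorphism--round--diffeomorphism and peel off the layers; both leave the same analytic and cocycle bookkeeping at the level of a sketch.
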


\noindent
We only sketch the argument:

\begin{proof}
Let $A$ be an annulus, and let $h:S^1\times[0,1]\to A$ be a framing such that $X=-h_t/h_\theta$. Let $T$ be an operator satisfying the Segal commutation relations \eqref{eq: SCM}. Pick annuli $A_1$ and $A_2$ such that $A_1AA_2=r^{\ell_0}$. The operators $T_i:=\pi(A_i)$ satisfy \eqref{eq: SCM} for $A_i$ by the previous proposition (with respect to some framings $h_i$ of $A_i$).
So $T_1TT_2$ satisfies the Segal commutation relations for $r^{\ell_0}$.

By letting $f$ in \eqref{eq: SCM} correspond to the pullback of $\ell_0=z\partial_z$ under the given framing of $A_1AA_2=r^{\ell_0}$, we see that $T_1TT_2$ commutes with $L_0$, hence induces a map at the level of finite energy vectors. 
By considering the pullbacks of $\ell_n = z^{n+1}\partial_z$, we see that the operator $T_1TT_2$ also satisfies commutations relations with the operators $L_n$:
\[
L_n (T_1TT_2) = r^n (T_1TT_2)L_n.
\]
Since $W$ is irreducible, it follows that $T_1TT_2=r^{L_0}$ up to a constant. By rescaling $T$, we may assume without loss of generality that constant is equal to $1$.
By construction, the operators $T_1$ and $T_2$ are injective with dense image. Hence we may conclude from the identity
$T_1 \pi(A) T_2 = r^{L_0} = T_1 T T_2$,
 that $\pi(A) = T$, as claimed. 
\end{proof}

\section{Continuity and holomorphicity}

In this section, we show that the representations of
$\tAnn_c$ constructed in Theorem~\ref{thm: representation exists} are holomorphic.
(Neretin \cite{Neretin90} claimed a similar result for the subsemigroup of ‘thick annuli,’ although his paper did not include a proof of holomorphicity.)
We will expand on what  it means for the a representation $\pi:\tAnn_c \to \cB(\cH)$ to be holomorphic after some preliminary notions.

If $\bbV$ is a locally convex topological vector space and $U \subset \bbC$ is open, a function $f:U \to \bbV$ is called holomorphic if for every $z \in U$ the limit 
\[
\lim_{z \to z_0} \frac{f(z) - f(z_0)}{z - z_0}
\]
exists in the completion of $\bbV$.
More generally, if $U \subset \bbC^n$ then $f$ is called holomorphic if it is separately holomorphic as a function of each of its $n$ variables.

We will be particularly interested in holomorphic functions valued in the bounded operators $\cB(\cH)$ on a Hilbert space $\cH$, equipped with one of the following topologies: the norm topology, the strong operator topology (or SOT, generated by the seminorms $T \mapsto \norm{Tv}$), or the weak operator topology (or WOT, generated by the seminorms $T \mapsto \abs{\ip{Tv,u}}$).
As it turns out, the notion of holomorphic function into $\cB(\cH)$ is independent of the choice of topology (among the ones listed above). We include a proof of this well-known fact for the convenience of the reader:

\begin{lem}\label{lem: B(H) holomorphicity}
Let $U \subset \bbC^n$ be open and let $f:U \to \cB(\cH)$. Then $f$ is holomorphic with respect to the norm topology on $\cB(\cH)$ if and only if it is holomorphic with respect to the SOT, if and only if it is holomorphic with respect to the WOT.
\end{lem}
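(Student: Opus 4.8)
The plan is to prove the chain of implications norm $\Rightarrow$ SOT $\Rightarrow$ WOT $\Rightarrow$ norm, where the first two are trivial (convergence of a difference quotient in the norm topology implies it in the SOT, since $\norm{Tv} \le \norm{T}\norm{v}$, and SOT convergence implies WOT convergence since $\abs{\ip{Tv,u}} \le \norm{Tv}\norm{u}$; one must also check that the difference quotient converges to the \emph{same} limit in the weaker topology, which is immediate since a net/function can have at most one limit in the Hausdorff WOT). So the entire content is the implication WOT $\Rightarrow$ norm. Since holomorphicity in several variables is defined as separate holomorphicity in each variable, it suffices to treat $n=1$: fix all but one variable and work with $f:U \to \cB(\cH)$ for $U \subset \bbC$ open.

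First I would reduce to a boundedness statement. The classical fact to invoke is that a $\cB(\cH)$-valued function which is WOT-holomorphic (equivalently: $z \mapsto \ip{f(z)v,u}$ is holomorphic for each $v,u \in \cH$) and which is \emph{locally bounded in norm} is automatically norm-holomorphic. Indeed, once $f$ is locally norm-bounded, for $z_0 \in U$ pick a disc $\overline{D(z_0,r)} \subset U$ on which $\norm{f} \le M$; for $z,z'$ in the smaller disc $D(z_0,r/2)$ the Cauchy integral formula applied to each scalar function $\ip{f(\cdot)v,u}$ gives
\[
\Big\langle \frac{f(z)-f(z')}{z-z'}\,v,\,u\Big\rangle = \frac{1}{2\pi i}\int_{\abs{w-z_0}=r} \frac{\ip{f(w)v,u}}{(w-z)(w-z')}\,dw,
\]
and from this one reads off both that the difference quotients form a WOT-Cauchy (hence, by boundedness plus completeness of $\cB(\cH)$ in WOT on bounded sets, WOT-convergent) family \emph{and} — estimating with $\abs{\ip{f(w)v,u}} \le M\norm{v}\norm{u}$ and using the uniform-boundedness-principle form of these bounds — that the convergence is actually in operator norm. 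A clean way to package this: the difference quotients $g_h := (f(z_0+h)-f(z_0))/h$ are a WOT-Cauchy net as $h \to 0$; together with a uniform norm bound (again via Cauchy's formula) and the fact that norm-bounded WOT-Cauchy nets in $\cB(\cH)$ are norm-Cauchy when the estimates are uniform — more carefully, one shows $\norm{g_h - g_{h'}} \to 0$ directly from the integral formula — this yields norm-convergence of the difference quotient.

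The one genuine point requiring an argument is therefore \emph{local norm-boundedness of a WOT-holomorphic function}, and this is where I expect the main (though standard) obstacle to lie. The argument is a double application of the uniform boundedness principle. Fix $z_0$ and a closed disc $K = \overline{D(z_0,r)} \subset U$. For each fixed $u \in \cH$, the scalar functions $z \mapsto \ip{f(z)v,u}$ are holomorphic, hence continuous, hence bounded on $K$ for each $v$; so $\{f(z)^*u : z \in K\}$ is weakly bounded in $\cH$, hence norm-bounded by the uniform boundedness principle: $\sup_{z\in K}\norm{f(z)^*u} = \phi(u) < \infty$. Now the family $\{f(z)^* : z \in K\} \subset \cB(\cH)$ is pointwise-norm-bounded, so a second application of the uniform boundedness principle gives $\sup_{z \in K}\norm{f(z)^*} < \infty$, i.e.\ $\sup_{z\in K}\norm{f(z)} < \infty$. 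This completes the reduction, and combining it with the Cauchy-integral argument of the previous paragraph finishes the proof. I would write this up taking care to state explicitly which direction of each implication is trivial and to note that separate holomorphicity in the several-variables case lets us reduce everything to $n=1$ without further comment.
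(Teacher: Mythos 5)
Your proposal is correct, and it rests on the same two pillars as the paper's proof (the uniform boundedness principle and the scalar Cauchy integral formula), but it packages them differently. The paper applies UBP directly to the family of difference quotients $\tfrac{f(z)-f(z_0)}{z-z_0}$ — using the removable singularity at $z_0$ and the WLOG assumption that $U$ is a disc with $f$ extending past $\partial U$ to get boundedness of the scalar functions — thereby obtaining Lipschitz norm-continuity of $f$; it then forms the operator-valued Cauchy integral $h(z)=\tfrac{1}{2\pi i}\int_{\partial U}\tfrac{f(\zeta)}{\zeta-z}\,d\zeta$ (which makes sense once $f$ is norm-continuous), checks $\ip{h(z)v,u}=\ip{f(z)v,u}$, and concludes $f=h$ is norm-holomorphic. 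You instead apply UBP to $f$ itself on a compact disc to get local norm-boundedness, and then obtain norm-convergence of the difference quotients directly from the Cauchy estimate $\norm{g_h-g_{h'}}\le C\,\abs{h-h'}$, avoiding any operator-valued integral; both routes are standard and of comparable length, yours being marginally more self-contained at the cost of one explicit second-order Cauchy estimate. One caution for the write-up: the intermediate slogan that ``norm-bounded WOT-Cauchy nets are norm-Cauchy'' is false as a general statement and should be dropped; the correct content is exactly the direct estimate you then describe, namely writing $\ip{(g_h-g_{h'})v,u}$ as a contour integral with kernel $\tfrac{h-h'}{(w-z_0)(w-z_0-h)(w-z_0-h')}$ and bounding it by $\tfrac{4M}{r^2}\abs{h-h'}\norm{v}\norm{u}$ uniformly over unit vectors $v,u$, which is what yields the norm-Cauchy property. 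The trivial implications and the reduction to $n=1$ via separate holomorphicity are handled exactly as in the paper.
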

\begin{proof}
Without loss of generality we may consider $U \subset \bbC$.
Holomorphicity being a local property, we may further assume that $U$ is a disc, and that $f$ extends holomorphically past the boundary of $U$.
We need only show that a WOT-holomorphic function $f:U \to \cB(\cH)$ is in fact norm holomorphic.
We begin by showing that such a function is norm continuous.

Fix a point $z_0 \in U$, and a vector $v \in \cH$.
For each $u \in \cH$ the function 
\[
g_{v,u}(z):=\big\langle\tfrac{f(z)-f(z_0)}{z-z_0}v,u\big\rangle
\]
has a removable singularity at $z_0$ since $f$ is WOT holomorphic.
Recalling that $U$ is assumed to be a disk and $g_{v,u}$ extends holomorphically past $\partial U$, it follows that $g_{v,u}(U)$ is bounded as well.
Since this holds for all $u$, and weak boundedness implies boundedness (as a consequence of the principle of uniform boundedness), it follows that $g_v(U)$ is bounded, where $g_v:U \to \cH$ is the function 
\[
g_v(z) = \frac{f(z)-f(z_0)}{z-z_0}v.
\]
Again invoking the principle of uniform boundedness, we conclude that in fact the set of operators $\frac{f(z)-f(z_0)}{z-z_0}$ is bounded in norm.
This means that $\norm{f(z)-f(z_0)} \le M\abs{z-z_0}$ for some constant $M$, and in particular $f$ is (Lipschitz) norm continuous at $z_0$.
As $z_0$ was arbitrary, we conclude that $f$ is norm continuous

As $f$ is continuous, the function $h:U \to \cB(\cH)$ given by
\[
h(z) = \frac{1}{2\pi i}\int_{\partial U} \frac{f(\zeta)}{\zeta-z} \, d\zeta
\]
defines a norm holomorphic function $U \to \cB(\cH)$, and by the single variable Cauchy integral formula
\[
\ip{h(z)v,u} = \frac{1}{2\pi i}\int_{\partial U} \frac{\ip{f(\zeta)v,u}}{\zeta-z} \, d\zeta = \ip{f(z)v,u}
\]
for any $v,u \in \cH$.
Hence $h=f$ and $f$ is in particular norm holomorphic, as required.
\end{proof}

We return to the question of what it means for a representation $\pi:\tAnn_c \to \cB(\cH)$ to be holomorphic.
Recall from Section~\ref{sec: semigroup of annuli} that $\tAnn_c$ is equipped with a complex diffeological structure and a topology.
Recall that a complex diffeological space $X$ is equipped with a family of distinguished maps $M \to X$, called `holomorphic maps,' for each finite-dimensional complex manifold $M$.
If $X$ is equipped with a topology we typically also require that the distinguished holomorphic maps be continuous.
Lemma~\ref{lem: B(H) holomorphicity} equips $\cB(\cH)$
with a natural complex diffeology in the above sense.

A map $X \to Y$ of complex diffeological spaces is called \emph{G\^{a}teau holomorphic} 
if for every holomorphic map $f:M \to X$, the composite $M \overset{f}{\to} X \to Y$ is again holomorphic.
We say that $X \to Y$ is \emph{holomorphic} if it G\^{a}teaux holomorphic and continuous.
For a complex diffeological space $X$, the notion of G\^ateaux holomorphic map $X \to \cB(\cH)$ does not depend on the choice of topology on $\cB(\cH)$ (among the ones listed above), whereas the continuity and hence holomorphicity of the map does depend on the topology. 

The goal of this section is to prove that the representations $\pi:\tAnn_c \to \cB(\cH)$ constructed in Theorem~\ref{thm: representation exists} are holomorphic when $\cB(\cH)$ is equipped with the strong operator topology.
We must show that  i) for every holomorphic map $M \to \tAnn_c$ the composite $M \to \tAnn_c \overset{\pi}{\to} \cB(\cH)$ is again holomorphic, and ii) $\pi$ is continuous when $B(\cH)$ is equipped with the strong operator topology.
We will also show that the restriction of $\pi$ to the subsemigroup of thick annuli $\tAnn_c^\circ$ is holomorphic when $\cB(\cH)$ is equipped with the norm topology.

We begin with a key technical observation.

\begin{lem}\label{lem: locally bounded}
The representation $\pi:\tAnn_c \to \cB(\cH)$ is locally bounded.
That is, for each $A_0 \in \tAnn_c$, there is a neighborhood of $U$ of $A_0$ in $\tAnn_c$ such that $\pi|_U$ is bounded in operator norm.
\end{lem}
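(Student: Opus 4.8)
The plan is to reduce local boundedness to the explicit operator-norm estimate recorded in Remark~\ref{rem: explicit norm estimates}, using the local triviality of the central extension $\tAnn_c\to\Ann$.

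First I would fix $A_0\in\tAnn_c$ with image $\bar A_0\in\Ann$. By the local triviality of $\tAnn_c\to\Ann$ in the $C^\infty$ topology (\cite[Prop.~5.5]{HenriquesTener24ax}), there is a neighborhood $V$ of $\bar A_0$ in $\Ann$ and a continuous section $s\colon V\to\tAnn_c$, and over $V$ the projection $\tAnn_c\to\Ann$ is isomorphic to the trivial bundle $V\times(\bbC^\times\times\bbZ)$ with the fiber acting by the central action. I would moreover want $s$ to be carried by a continuously varying family of framings, say $s(A)=[(A,h_A,1)]$ with $A\mapsto h_A$ continuous into $C^\infty(S^1\times[0,1],\bbC)$ (composing with the embedded representatives of the annuli in $\bbC$); securing this is the one genuinely nontrivial point, discussed in the last paragraph. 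Granting it, note that $\pi$ restricted to the central subgroup $\bbC^\times\times\bbZ$ consists of bounded operators whose norm is bounded on compact sets — the $\bbC^\times$ factor acts by scalars, and the generating $2\pi$-rotation maps to the unitary $e^{-2\pi i L_0}$ (compute its framing as in Definition~\ref{defn: tAnnc}) — so it suffices to bound $\norm{\pi(s(A))}$ for $A$ in a neighborhood of $\bar A_0$.

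Next I would set $X_A(\tau):=(-\partial_t h_A/\partial_\theta h_A)|_{S^1\times\{\tau\}}\in\cP^{in}$, so that $\pi(s(A))=\prod_{1\ge\tau\ge0}\Exp(\pi(X_A(\tau)))\,d\tau$ by \eqref{eq: Definition of the action}. Forming $X_A$ from $h_A$ only uses $\partial_t$, $\partial_\theta$ and division by the nowhere-vanishing $\partial_\theta h_A$, all of which are continuous operations in the $C^\infty$ topology; hence $A\mapsto X_A$ is continuous $V\to C^\infty(S^1\times[0,1],\bbC)$, and in particular $A\mapsto\sup_{\tau\in[0,1]}\norm{(X_A)_{\theta\theta}(\cdot,\tau)}_{L^2(S^1)}$ is a continuous real-valued function on $V$, hence bounded by some $M<\infty$ on a possibly smaller neighborhood $V'\subseteq V$ of $\bar A_0$. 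Feeding this into the bound \eqref{eqn: norm bound in terms of sobolev} gives $\norm{\pi(s(A))}_{\cB(\cH)}\le e^{CM}$ for all $A\in V'$ with $C$ the universal constant there, and therefore $\pi$ is bounded in operator norm on any sufficiently small neighborhood of $A_0$ lying in the trivializing chart over $V'$. This would complete the proof modulo the framing claim.

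The hard part is precisely the claim that the local section $s$ may be taken to come from a continuously varying family of framings — equivalently, that near any $\bar A_0$ the map from the space of framings to $\Ann$ admits a continuous local section through framings. For $\bar A_0$ thick this is routine: nearby annuli are again thick, and one picks diffeomorphisms $A\cong S^1\times[0,1]$ depending continuously on $A$. For partially thin $\bar A_0$ one must run the line-field smoothing of Lemma~\ref{lem: SPADE}, together with the homotopy argument of Lemma~\ref{lem: DIAMOND}, in families over a neighborhood of $\bar A_0$, checking that the transverse foliation, the reparametrizations $\alpha_t$, and the final straightening can all be chosen continuously in $A$; alternatively, one extracts such a family directly from the construction behind \cite[Prop.~5.5]{HenriquesTener24ax}. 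I expect this families version of Lemmas~\ref{lem: SPADE}–\ref{lem: DIAMOND} to be the only real obstacle.
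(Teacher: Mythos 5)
Your proposal follows essentially the same route as the paper's proof: local triviality of $\tAnn_c\to\Ann$, a locally varying family of framings, continuity of $A\mapsto X_A$ in the $C^\infty$ topology, and the norm bound \eqref{eqn: norm bound in terms of sobolev} from Remark~\ref{rem: explicit norm estimates}. The one step you flag as the real obstacle --- a smoothly varying family of framings exponentiating onto a neighborhood in $\Ann$ --- is exactly what the paper imports by citing \cite[Prop.~4.20]{HenriquesTener24ax} from the companion paper, so your argument is complete once that reference is invoked.
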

\begin{proof}
The projection $\tAnn_c \to \Ann$ is locally trivial by \cite[Prop. 5.5]{HenriquesTener24ax}, which is to say that $A_0$ has a neighborhood of the form $D \times V$ where $D$ is a bounded subset of $\bbC^\times$ and $V$ is a neighborhood of the projected image $[A_0]$ in $\Ann$.
Shrinking $V$ if necessary, by \cite[Prop. 4.20]{HenriquesTener24ax} we may choose a smooth family of framings which exponentiate onto $V$. 
In particular we have a smoothly varying family of paths $X(t,B)$ indexed by $B \in V$ such that $B = \prod_{1 \ge t \ge 0} \Exp(X(t,B))\, dt$.
By further shrinking $V$ we may also assume that $\{X(t,B)\}$ is bounded in $C^2$-norm,
and hence bounded in the Sobolev norm
$\norm{X_{\theta\theta}}_{L^2(S^1)}$ which appears in the right-hand side of \eqref{eqn: norm bound in terms of sobolev}.
For $(z,B) \in U$ we have
\[
\pi(z,B) = z \prod_{1 \ge t \ge 0} \Exp(\pi(X(t,B)))\, dt
\]
and $\pi$ is bounded on $U=D \times V$ by Remark~\ref{rem: explicit norm estimates}.
\end{proof}

\begin{lem}\label{lem: gateaux holomorphic}
The representation $\pi:\tAnn_c \to \cB(\cH)$ is G\^{a}teaux holomorphic.
\end{lem}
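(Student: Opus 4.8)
The plan is to reduce, via Lemma~\ref{lem: B(H) holomorphicity}, to the following statement: for every holomorphic map $g:\DD\to\tAnn_c$ from a one-dimensional disc, the composite $z\mapsto\pi(g(z))$ is holomorphic into $\cB(\cH)$ equipped with the weak operator topology; and to check this locally near a fixed $z_0\in\DD$. First I would mimic the opening of the proof of Lemma~\ref{lem: locally bounded}: after shrinking $\DD$, use the local triviality of $\tAnn_c\to\Ann$ together with a \emph{holomorphic} family of framings exponentiating onto a neighbourhood of $[g(z_0)]$ (as furnished by \cite{HenriquesTener24ax}) to write
\[
g(z)=\zeta(z)\cdot\prod_{1\ge t\ge 0}\Exp(X(t,z))\,dt ,
\]
where $\zeta:\DD\to\bbC^\times$ is holomorphic and $z\mapsto X(\cdot,z)$ is holomorphic as a map into the complex Fr\'echet space $C^\infty(S^1\times[0,1],\bbC)$, with each path $X(\cdot,z)$ lying in $\cP^{in}$. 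Then $\pi(g(z))=\zeta(z)\,U_z(1,0)$, where $U_z$ is the evolution system generated by $\{\pi(X(t,z))\}_{t}$, so since $\zeta$ is scalar and holomorphic it suffices to prove that $z\mapsto U_z(1,0)$ is holomorphic in the weak operator topology.

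Next I would write $z=p+iq$ and invoke Remark~\ref{rem: diff in families} in each of the two coordinate directions. This shows that $z\mapsto U_z(1,0)$ has partial derivatives in $\cB(\cH_2,\cH_0)$, given by \eqref{eqn: differentiating family of evolution systems} as
\[
\partial_p U_z(1,0)=\int_0^1 U_z(1,x)\,\pi(\partial_p X(x,z))\,U_z(x,0)\,dx ,
\]
and likewise with $\partial_q$ in place of $\partial_p$. (Here $\partial_p X(x,z)$ and $\partial_q X(x,z)$ need not lie in the cone $\cX^{in}$, but $\pi$ of either is a bounded operator $\cH_2\to\cH_1$ and $\cH_1\to\cH_0$ by the energy bound \eqref{eqn: energy bound}, so these formulas make sense.) The key observation is that holomorphicity of $z\mapsto X(\cdot,z)$ forces the Cauchy--Riemann identity $\partial_q X(x,z)=i\,\partial_p X(x,z)$ in $C^\infty(S^1\times[0,1],\bbC)$; since $\pi$ is complex-linear on $C^\infty(S^1,\bbC)\partial_\theta\subset\cX_c(S^1)$, this propagates to $\partial_q U_z(1,0)=i\,\partial_p U_z(1,0)$ in $\cB(\cH_2,\cH_0)$.

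From here I would conclude as follows. For fixed $\xi\in\cH_2$ and $\eta\in\cH$, the scalar function $z\mapsto\ip{U_z(1,0)\xi,\eta}$ is continuous (differentiability in $\cB(\cH_2,\cH_0)$ already gives continuity), has partial derivatives at every point, and these satisfy the Cauchy--Riemann equation; hence it is holomorphic --- by e.g.\ the Looman--Menchoff theorem, or alternatively by first showing from the integral formula and the estimates in the proof of Lemma~\ref{lem: HORSE} that the partials are norm-continuous, so that $z\mapsto U_z(1,0)$ is in fact $C^1$ into $\cB(\cH_2,\cH_0)$. Thus $z\mapsto U_z(1,0)\xi$ is a holomorphic $\cH_0$-valued function for every $\xi\in\cH_2$. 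Finally I would invoke Lemma~\ref{lem: locally bounded} to bound $\norm{U_z(1,0)}_{\cB(\cH_0)}$ on a neighbourhood of $z_0$: for an arbitrary $\xi\in\cH_0$, approximating by vectors $\xi_n\in\cH_2$ exhibits $z\mapsto U_z(1,0)\xi$ as a locally uniform limit of holomorphic $\cH_0$-valued functions, hence holomorphic. So $z\mapsto U_z(1,0)$ is holomorphic for the strong (a fortiori weak) operator topology, and therefore so is $z\mapsto\pi(g(z))=\zeta(z)\,U_z(1,0)$. As $g$ and $z_0$ were arbitrary, $\pi$ is G\^ateaux holomorphic.

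I expect the main obstacle to be the first step --- producing a genuinely \emph{holomorphic} (not merely smooth) family of framings, with the accompanying holomorphic dependence $z\mapsto X(\cdot,z)\in C^\infty(S^1\times[0,1],\bbC)$. This is exactly where the hypothesis that $g$ is a holomorphic map of complex diffeological spaces gets used, and it rests on the framing results of \cite{HenriquesTener24ax}. A secondary, purely bookkeeping difficulty is tracking which operator norm each statement lives in ($\cH_2\to\cH_0$ for the derivative, $\cH_0\to\cH_0$ for the local bound); once that is organised, the Cauchy--Riemann mechanism is formal, being just the complex-linearity of $\pi$ combined with the differentiation formula of Lemma~\ref{lem: HORSE}.
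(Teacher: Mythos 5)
Your proposal is correct and follows essentially the same route as the paper: use the complex-diffeological structure (holomorphic family of framings from \cite{HenriquesTener24ax}) to write $\pi(g(z))=\zeta(z)\prod_{1\ge t\ge 0}\Exp(\pi(X(t,z)))\,dt$, differentiate the evolution system in the parameter via Lemma~\ref{lem: HORSE}/Remark~\ref{rem: diff in families}, and then handle general $\xi\in\cH$ by density together with Lemma~\ref{lem: locally bounded}. Your explicit Cauchy--Riemann bookkeeping ($\partial_q X=i\,\partial_p X$, hence $\partial_q U=i\,\partial_p U$, plus the Looman--Menchoff/continuity-of-partials remark) is just a more detailed rendering of the paper's statement that the antiholomorphic derivative $\partial_{\overline{m_i}}$ of the time-ordered exponential vanishes because $m\mapsto\pi(X(t,m))$ is holomorphic into $\cB(\cH_1,\cH_0)$.
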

\begin{proof}
In light of Lemma~\ref{lem: B(H) holomorphicity}, the conclusion of the lemma is independent of whether we equip $\cB(\cH)$ with the norm topology, SOT, or WOT; we will use the SOT.
Fix an open subset $M \subset \bbC^n$ and a holomorphic family $M \to \tAnn_c$ (denoted $m \mapsto A_m$) which we assume without loss of generality to be embedded in the complex plane.
By definition of the complex diffeological structure on $\tAnn_c$, we may (locally) choose a holomorphic family of framings $h:M \times S^1 \times [0,1] \to \bbC$ and a holomorphic function $z:M \to \bbC^\times$ such that
\[
A_m = z(m)\prod_{1 \ge t \ge 0} \Exp(X(t,m))\,dt
\]
where $X(t,m)$ is the path of inward pointing vector fields associated to the framing $h(m,\cdot,\cdot)$. 
The holomorphicity condition on the framings means that the vector fields $X(t,m)(\theta)$ are jointly smooth in all the variables, and that for each fixed $(t,\theta)$ the function $m \mapsto X(t,m)(\theta)$ is holomorphic.
In particular, this means that the map $m \mapsto X(t,m)$ is holomorphic for the $C^\infty$ topology on the space $\cX(S^1)$ of smooth vector fields on $S^1$.

We must show for each $\xi \in \cH$ that the vectors
\[
\pi(A_m)\xi = z(m)\prod_{1 \ge t \ge 0} \Exp(\pi(X(t,m)))\,dt \, \xi
\]
depend holomorphically on $m$.
We assume without loss of generality that $z \equiv 1$, and begin by considering the case when $\xi \in \cH_2$.
In this case, by Lemma~\ref{eqn: differentiating family of evolution systems} and Remark~\ref{rem: diff in families}, the antiholomorphic derivative in the $i$-th coordinate vanishes:
\begin{align*}
&\quad \partial_{\overline{m_i}} \left(\prod_{1 \ge t \ge 0} \Exp(\pi(X(t,m)))\,dt \right) \xi =\\
&=
\left(\int_0^1 \prod_{1 \ge t \ge x} \Exp(\pi(X(t,m)))\,dt \big[\partial_{\overline{m_i}}\pi(X(t,m)) \big] \prod_{x \ge t \ge 0}  \Exp(\pi(X(t,m)))\, dt \right) \xi\\
& = 0
\end{align*}
because the map $m \mapsto \pi(X(t,m))$ is holomorphic $M \to \cB(\cH_1, \cH_0)$.

Now consider a general $\xi \in \cH$, and choose a sequence $\xi_n \in \cH_2$ such that $\xi_n \to \xi$.
The condition of pointwise-holomorphicity is local, and so by Lemma~\ref{lem: locally bounded} (and the fact that $m \mapsto A_m$ is continuous) we may assume without loss of generality that $\norm{\pi(A_m)} \le K$ for all $m$.
Hence $\norm{\pi(A_m)\xi-\pi(A_m)\xi_n} \le K\norm{\xi - \xi_n}$ and thus $\pi(A_m)\xi_n$ converges to $\pi(A_m)\xi$, uniformly in $m$.
The maps $m \mapsto \pi(A_m)\xi_n$ are holomorphic by our preceding work, and thus $m \mapsto \pi(A_m)\xi$ is holomorphic as well.
\end{proof}

We now complete the main result of the section, establishing continuity of the representation.

\begin{thm}\label{thm: representation is holomorphic}
Let $W$ be a unitary positive energy representation of $\Vir_c$, let $\cH$ be its Hilbert space completion, and let $\pi:\tAnn_c \to \cB(\cH)$ be the representation constructed in Theorem~\ref{thm: representation exists}.
Then $\pi$ is holomorphic when $\cB(\cH)$ is given the strong operator topology, and the restriction of $\pi$ to $\tAnn_c^\circ$ is holomorphic when $\cB(\cH)$ is given the norm topology.
\end{thm}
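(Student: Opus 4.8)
The plan is to combine the Gâteaux-holomorphicity already established in Lemma~\ref{lem: gateaux holomorphic} with a continuity argument, reducing the latter to the norm estimates of Remark~\ref{rem: explicit norm estimates} together with the local triviality of $\tAnn_c \to \Ann$. Recall that by definition a representation is holomorphic (for a given topology on $\cB(\cH)$) if it is Gâteaux holomorphic and continuous, and the Gâteaux-holomorphicity half is topology-independent among the norm/SOT/WOT topologies, so it is done. What remains is: (i) continuity of $\pi$ into $(\cB(\cH), \mathrm{SOT})$ on all of $\tAnn_c$, and (ii) continuity of $\pi|_{\tAnn_c^\circ}$ into $(\cB(\cH), \|\cdot\|)$, where $\tAnn_c^\circ$ is the preimage of the open subsemigroup of thick annuli.

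First I would fix $A_0 \in \tAnn_c$ and use \cite[Prop. 5.5]{HenriquesTener24ax} to obtain a neighbourhood $U \cong D \times V$ of $A_0$, with $D$ a bounded subset of $\bbC^\times$ and $V$ a neighbourhood of $[A_0]$ in $\Ann$, and use \cite[Prop. 4.20]{HenriquesTener24ax} to choose a smooth family of framings $X(t,B)$, $B \in V$, bounded in $C^2$, with $B = \prod_{1 \ge t \ge 0}\Exp(X(t,B))\,dt$ — exactly as in the proof of Lemma~\ref{lem: locally bounded}, which also gives a uniform bound $\|\pi(z,B)\| \le K$ on $U$. To prove SOT-continuity at $A_0$ it then suffices, by the uniform bound and an $\varepsilon/3$ argument, to check that $B \mapsto \pi(z,B)\xi$ is continuous for $\xi$ in the dense subspace $\cH_2$ (or even $W$), since $\|\pi(z,B)\xi - \pi(z,B)\xi'\| \le K\|\xi - \xi'\|$ uniformly in $(z,B) \in U$, and the scalar factor $z$ varies continuously. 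For $\xi \in \cH_2$, continuity of $(z,B) \mapsto \pi(z,B)\xi$ reduces to continuity of $B \mapsto \big(\prod_{1 \ge t \ge 0}\Exp(\pi(X(t,B)))\,dt\big)\xi$, which I would extract from the evolution-system machinery: the estimate \eqref{eqn: evolution difference estimate} in the proof of Lemma~\ref{lem: HORSE} shows $\|U_{B'}(1,0) - U_B(1,0)\|_{1 \to 0} \le C\sup_{t}\|X(t,B') - X(t,B)\|_{1 \to 0}$, and since $B \mapsto X(\cdot,B)$ is continuous into $C^\infty(S^1\times[0,1])$ (hence into the relevant Sobolev norm controlling $\|{\cdot}\|_{1\to 0}$ via \eqref{eqn: energy bound}), this tends to $0$; applying the resulting operators to $\xi \in \cH_1 \supset \cH_2$ gives the desired SOT-continuity.

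For the thick case, the point is that on $\tAnn_c^\circ$ one actually gets norm continuity because the operators are not merely bounded $\cH_n \to \cH_n$ but, being time-ordered exponentials over framings of \emph{thick} annuli, are smoothing in the following sense: a thick annulus $A$ factors as $A = A_1 \cup A_2 \cup A_3$ where $A_1, A_3$ are again thick, so $\pi(A)$ factors through an operator that maps $\cH$ into $\bigcap_n \cH_n$; more efficiently, $\pi(A)$ maps $\cH_0 \to \cH_n$ boundedly for every $n$, with the relevant norms controlled by the $C^{n+2}$-size of a framing as in \eqref{eqn: norm bound in terms of sobolev on Hn}. I would then upgrade the SOT-continuity argument: write $\pi(A) = \pi(A')\pi(A'')$ locally, with $A''$ contributing a uniformly bounded operator $\cH_0 \to \cH_2$ and $A'$ contributing an operator that is SOT-continuous and uniformly bounded $\cH_2 \to \cH_0$; since the embedding $\cH_2 \hookrightarrow \cH_0$ is compact (as $(1+L_0)^{-2}$ is compact, $L_0$ having discrete spectrum with finite multiplicities), a uniformly bounded SOT-continuous family of operators out of $\cH_2$ postcomposed with a norm-bounded family, with the compactness converting strong to norm convergence on the unit ball of $\cH_2$, yields norm continuity of the composite on $\cH_0$. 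This is the standard "$\mathrm{SOT} + \text{compactness} \Rightarrow \text{norm}$" mechanism and gives norm-continuity of $\pi|_{\tAnn_c^\circ}$.

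\textbf{The main obstacle} I anticipate is the thick case: establishing cleanly that $\pi(A)$ for $A$ thick is smoothing, i.e. maps $\cH_0$ boundedly into every $\cH_n$ with locally uniform bounds — this requires the factorization $A = A_1 \cup A_2 \cup A_3$ to be carried out compatibly with framings and with the norm estimates \eqref{eqn: norm bound in terms of sobolev on Hn}, and one must be careful that the factorization (and the $C^{n+2}$-bounds on the pieces) can be chosen to depend continuously on $A$ in a neighbourhood. Once that smoothing property is in hand, the compactness argument is routine. The SOT case on all of $\tAnn_c$ has no real obstacle beyond bookkeeping, since everything needed — the uniform operator bound, the density of $\cH_2$, and the norm estimate \eqref{eqn: evolution difference estimate} — is already available.
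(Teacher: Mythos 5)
Your Gâteaux-holomorphicity step and your SOT-continuity argument are fine, but note that they follow a genuinely different route from the paper for the SOT half: the paper first proves norm continuity on $\tAnn_c^\circ$ by a soft argument — $\Ann^\circ$ is open in the Fréchet space $C^\infty(S^1)^{\oplus 2}$, so local boundedness (Lemma~\ref{lem: locally bounded}) plus Gâteaux holomorphicity (Lemma~\ref{lem: gateaux holomorphic}) already give norm continuity by \cite[Prop.~3.7]{Dineen} — and then deduces SOT continuity on all of $\tAnn_c$ by writing $\pi(A)\xi = \pi(A \circ r^{\ell_0})\xi'$ for $\xi = r^{L_0}\xi'$ and a density/uniform-boundedness argument. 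Your direct quantitative route via the perturbation estimate \eqref{eqn: evolution difference estimate} and the energy bound \eqref{eqn: energy bound} does work, with one bookkeeping caveat: the constant in \eqref{eqn: evolution difference estimate} requires uniform control of $\norm{U_B(t,s)}_{\cB(\cH_1)}$, hence by \eqref{eqn: norm bound in terms of sobolev on Hn} a local bound on $\sup_\tau\norm{X(\tau,B)}_{7/2}$, not merely the $C^2$ bound quoted from Lemma~\ref{lem: locally bounded}; this is harmless after shrinking the neighborhood, since the family of framings is smooth.

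The norm-continuity half, however, has a genuine gap, and it is exactly where you flagged trouble. First, the smoothing claim — that $\pi(A)$ maps $\cH_0$ boundedly into $\cH_n$ for $A$ thick — does not follow from anything you invoke: \eqref{eqn: norm bound in terms of sobolev on Hn} is a bound in $\cB(\cH_n)$, not a bound $\cH_0 \to \cH_n$, and factoring $A$ into thick pieces only yields a product of $\cH_n \to \cH_n$-bounded operators, which is no more smoothing than each factor. A correct argument would require something like a factorization $A = A_1 \circ r^{\ell_0} \circ A_2$ through a fixed round collar (so that $r^{L_0}$ supplies the gain in regularity and compactness), done locally uniformly in $A$ — a nontrivial geometric statement not established in the paper. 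Second, even granting smoothness/compactness, your "SOT $+$ compactness $\Rightarrow$ norm" mechanism as stated only controls the variation of one factor: writing $\pi(A)-\pi(A_0) = (\pi(A')-\pi(A_0'))\pi(A'') + \pi(A_0')(\pi(A'')-\pi(A_0''))$, the second term has the SOT-null family sitting to the right of the compact operator, where strong convergence gives no norm control (e.g.\ $S_j = \ip{\cdot,e_j}e_1 \to 0$ strongly while $\norm{K S_j}$ stays bounded away from $0$ for a rank-one $K$); repairing this requires passing to adjoints and using SOT continuity of $A \mapsto \pi(A)^* = \pi(A^\dagger)$, which your proposal never sets up. So the thick-annulus half of your proof is incomplete as written; the paper's Dineen-based argument avoids both the smoothing property and the compactness bookkeeping entirely.
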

\begin{proof}
    In light of Lemma~\ref{lem: gateaux holomorphic}, it remains to show that $\pi:\tAnn_c \to \cB(\cH)$ is SOT-continuous, and that the restriction $\pi|_{\tAnn_c^\circ}$ is norm continuous.
    
    We first consider norm continuity.
    The projection $\tAnn_c^\circ \to \Ann^\circ$ is locally trivial by \cite[Prop. 5.5]{HenriquesTener24ax}, and so every centrally extended annulus in $\tAnn_c^\circ$ has a neighborhood of the form $D \times V$ where $V \subset \Ann^\circ$ is open, and $D$ is a bounded subset of $\bbC^\times$.
    The topology and complex diffeology on $\Ann$ are determined by the natural embedding into the Fr\'echet space $C^\infty(S^1) \times C^\infty(S^1)$ (discussed in Section~\ref{sec: semigroup of annuli}), and $\Ann^\circ$ is an open subset of $C^\infty(S^1) \times C^\infty(S^1)$.
    By \cite[Prop. 3.7]{Dineen}, a locally bounded G\^{a}teaux holomorphic function from an open subset of a Fr\'echet space to a Banach space is continuous.
    Recalling from Lemma~\ref{lem: B(H) holomorphicity} that G\^{a}teaux holomorphicity is independent of the topology on $\cB(\cH)$, we conclude by Lemmas~\ref{lem: locally bounded} and \ref{lem: gateaux holomorphic} that $\pi|_{\tAnn_c^\circ}$ is norm continuous, and hence norm holomorphic.

    We now fix $\xi \in \cH$, and we must show that $A \mapsto \pi(A)\xi$ is continuous as a map $\tAnn_c \to \cH$.
    First consider the case when $\xi \in r^{L_0}\cH$ for some fixed $0 < r < 1$, so that $\xi = r^{L_0}\xi'$ for some $\xi'\in\cH$.
    Recall that $r^{\ell_0}$ denotes the standard annulus $1 \ge \abs{z} \ge r$, with its standard boundary parametrizations and lift to $\tAnn_c$, so that $\pi(A)r^{L_0} = \pi(A\circ r^{\ell_0})$.
     The map 
    \[
    \tAnn_c \xrightarrow{-\circ r^{\ell_0}} \tAnn_c^\circ \overset{\pi}{\longrightarrow} \cB(\cH)
    \]
    is continuous, hence the expression
    \[
    \pi(A)\xi = \pi(A)r^{L_0}\xi' = \pi(A \circ r^{\ell_0})\xi'
    \]
    defines a continuous map $\tAnn_c \to \cH$ when $\xi \in r^{L_0}\cH$.

    In the general case, choose a sequence $\xi_n \in r^{L_0}\cH$ such that $\xi_n \to \xi$. 
    By Lemma~\ref{lem: locally bounded} for each fixed $A_0 \in \tAnn_c$ we may choose a neighborhood $V$ such that $\norm{\pi(A)} \le M$ for all $A \in V$, and thus we have
    \[
    \norm{\pi(A)\xi - \pi(A)\xi_n} \le M\norm{\xi - \xi_n} \to 0
    \]
    uniformly on $V$.
    Hence the map $A \mapsto \pi(A)\xi$ is continuous, completing the proof.
\end{proof}

\section{Representations of the Virasoro conformal net}\label{sec:7}

Given an interval $I\subset S^1$ (meaning a closed contractible subset of $S^1$), we have an associated subalgebra $\cX_c(I)\subset \cX_c(S^1)$ of the completed Virasoro algebra \eqref{eq: compl of Virc}.
That Lie algebra integrates to a subsemigroup of `bigons'
\[
\Bigon_c(I)\subset \tAnn_c
\]
defined as the identity connected component of the pullback along the projection map $\tAnn_c\twoheadrightarrow \Ann$ of
$\Bigon(I):=\big\{(A,\varphi_{in},\varphi_{out})\in\Ann\,:\,
\varphi_{in}|_{S^1\setminus I}=\varphi_{out}|_{S^1\setminus I}
\big\}$.
It fits into a central extension
\[
0\to\bbC^\times\to\Bigon_c(I)\to\Bigon(I)\to 0.
\]
An element of $\Bigon(I)$ might look like this:
\[
\begin{tikzpicture}[baseline={([yshift=-.5ex]current bounding box.center)}]
	\coordinate (a) at (120:1cm);
	\coordinate (b) at (240:1cm);
	\coordinate (c) at (180:.25cm);
	\fill[fill=red!10!blue!20!gray!30!white] (0,0) circle (1cm);
	\draw (0,0) circle (1cm);
	\fill[fill=white] (a)  .. controls ++(210:.6cm) and ++(90:.4cm) .. (c) .. controls ++(270:.4cm) and ++(150:.6cm) .. (b) -- ([shift=(240:1cm)]0,0) arc (240:480:1cm);
	\draw ([shift=(240:1cm)]0,0) arc (240:480:1cm);
	\draw (a) .. controls ++(210:.6cm) and ++(90:.4cm) .. (c);
	\draw (b) .. controls ++(150:.6cm) and ++(270:.4cm) .. (c);
	\draw (130:1.2cm) -- (130:1.4cm);
	\draw (230:1.2cm) -- (230:1.4cm);
	\draw (130:1.3cm) arc (130:230:1.3cm);
	\node at (180:1.5cm) {\scriptsize{$I$}};
\end{tikzpicture}
\]

Let $(\cH_{c,0},\pi_{c,0})$ denote the vacuum representation of the Virasoro algebra at central charge $c$.
The Virasoro conformal net was defined defined and further studied in \cite{BuchholzSchulz-Mirbach90,Loke,Carpi04,KawahigashiLongo04,Weiner17}.
It assigns to an interval $I$ the von Neumann algebra $\cA_c(I)\subset \cB(\cH_{c,0})$ generated by the unbounded operators $\pi_{c,0}(X)$ for $X\in \cX_c(I)$.\footnote{For more details on how to construct a von Neumann algebra given a collection of closed unbounded operators, see \cite[\S2.2]{CKLW18}.}
Alternatively,
\begin{equation}\label{eqn: diff generates Virasorto net}
\cA_c(I)=\pi_{c,0}\big(\Diff_c(I)\big)'',
\end{equation}
where $\Diff_c(I)\subset \Bigon_c(I)$ is the subgroup that integrates the real subalgebra of $\cX_c(I)$, and fits into a central extension
$0\to U(1)\to\Diff_c(I)\to\Diff(I)\to 0$.

The goal of this section is to construct for every representation $\cH$ of the conformal net $\cA_c$  a holomorphic representation of $\tAnn_c$ that is compatible with the representation of the conformal net.
In order to state the appropriate compatibility condition, we make the following observation.

\begin{lem}\label{lem: bigons inside local algebra}
For $I\subset S^1$ an interval
\begin{equation}\label{eqn: bigons inside local algebra}
\pi_{c,0}(\Bigon_c(I)) \subset \cA_c(I).
\end{equation}
\end{lem}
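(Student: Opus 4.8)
The plan is to represent an arbitrary element of $\Bigon_c(I)$ as a time-ordered exponential of vector fields supported in $I$, and then to show that such a time-ordered exponential lies in $\cA_c(I)$ because it is an iterated strong limit of products of operators affiliated with $\cA_c(I)$.

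First I would unwind the definitions. Write $M=\cA_c(I)$. By construction (cf.\ the reference to \cite{CKLW18} above), $M$ is a von Neumann algebra on $\cH_{c,0}$ with which every closed operator $\pi_{c,0}(X)$, $X\in\cX_c(I)$, is affiliated; equivalently, every unitary $u$ in the commutant $M'$ satisfies $u\,\pi_{c,0}(X)\,u^*=\pi_{c,0}(X)$ for all $X\in\cX_c(I)$. On the other side, by Definition~\ref{defn: tAnnc} every element $B\in\tAnn_c$ is of the form $B=z\cdot\prod_{1\ge t\ge 0}\Exp(X(t))\,dt$, where $z\in\bbC^\times$ and $X(t)=(-h_t/h_\theta)|_{S^1\times\{t\}}$ for a framing $h$ of the underlying annulus. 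If moreover $B\in\Bigon_c(I)$, then the underlying annulus lies in $\Bigon(I)$, so $\varphi_{in}$ and $\varphi_{out}$ coincide over the closed complement $S^1\setminus I$; over that set the annulus is pinched, which forces every framing $h$ to be constant in $t$ there, hence $h_t$ — and therefore $X(t)$ — vanishes on $S^1\setminus I$. Thus each $X(t)$ is a vector field supported in $I$, i.e.\ $X\in\cP^{in}$ with values in $\cX_c(I)$, and by \eqref{eq: Definition of the action} it suffices to show $\prod_{1\ge t\ge 0}\Exp(\pi_{c,0}(X(t)))\,dt\in M$.

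Next I would show that each $C_0$-semigroup $T_t(s)$ generated by $\pi_{c,0}(X(t))$ — which exists and consists of bounded operators on $\cH_{c,0}$ by Lemma~\ref{lem: semigroup generated by inward pointing vector field}, since $X(t)\in\cX^{in}$ — lies in $M$. Fix a unitary $u\in M'$. Since $u\,\pi_{c,0}(X(t))\,u^*=\pi_{c,0}(X(t))$, the conjugated family $s\mapsto u\,T_t(s)\,u^*$ is again a $C_0$-semigroup with the same generator $\pi_{c,0}(X(t))$; by uniqueness of the semigroup generated by a closed operator, $u\,T_t(s)\,u^*=T_t(s)$. As $M'$ is the linear span of its unitaries, $T_t(s)$ commutes with all of $M'$, so $T_t(s)\in(M')'=M$. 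Finally, by Remark~\ref{rem: explicit construction of evolution system} together with Remark~\ref{rem: evolution system on subspace}, the time-ordered exponential $\prod_{1\ge t\ge 0}\Exp(\pi_{c,0}(X(t)))\,dt$ is a strong-operator limit of finite products of the operators $T_t(s)$; since each such product lies in $M$ and $M$ is closed in the strong operator topology, the limit lies in $M$. Multiplying by $z\in\bbC^\times$ leaves us in $M$, so $\pi_{c,0}(B)\in\cA_c(I)$.

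The parts I expect to need the most care are (i) the geometric assertion that a framing of a bigon in $\Bigon(I)$ is necessarily static over $S^1\setminus I$, which should be read off from the precise definitions of bigons and framings in \cite{HenriquesTener24ax} and, in particular, from the fact that $I$ is a closed interval and the annulus is genuinely pinched over its complement; and (ii) the passage from ``$\pi_{c,0}(X(t))$ is affiliated with $M$'' to ``$T_t(s)\in M$'' — the standard fact that affiliation of the generator carries a $C_0$-semigroup into $M$, but worth spelling out since the generators here are not self-adjoint. Everything else is bookkeeping with the strong-limit description of time-ordered exponentials and the strong closedness of von Neumann algebras.
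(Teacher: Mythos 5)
Your route is genuinely different from the paper's. The paper disposes of this lemma with a soft locality argument: $\Bigon_c(I)$ and $\Diff_c(I')$ commute inside $\tAnn_c$, hence their images under $\pi_{c,0}$ commute; since $\pi_{c,0}(\Diff_c(I'))''=\cA_c(I')$ by \eqref{eqn: diff generates Virasorto net}, Haag duality $\cA_c(I')=\cA_c(I)'$ gives $\pi_{c,0}(\Bigon_c(I))\subset\cA_c(I')'=\cA_c(I)$. You instead argue directly with the generators: affiliation of $\pi_{c,0}(X)$, $X\in\cX_c(I)$, with $\cA_c(I)$ (which is indeed exactly what ``generated by the unbounded operators $\pi_{c,0}(X)$'' means), the uniqueness-of-the-generated-semigroup trick to get $T_t(s)\in\cA_c(I)$, and the strong-limit description of the time-ordered exponential from Remark~\ref{rem: explicit construction of evolution system} together with SOT-closedness of a von Neumann algebra. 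These analytic steps are correct, and they have the virtue of not invoking Haag duality; the price is that they require an input the paper's proof of this lemma does not need, namely a representation of the given bigon by a framing whose velocity field is supported in $I$.

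That input is where your argument has a genuine flaw: it is \emph{not} true that every framing of a bigon in $\Bigon(I)$ is constant in $t$ over $S^1\setminus I$. The pinching only forces $h(\cdot,0)$ and $h(\cdot,1)$ to agree there; at intermediate times the circles $h(\cdot,t)$ may slide along the thin part, since the non-negative-Jacobian condition is vacuous where the image is one-dimensional. Concretely, for the identity bigon (the completely thin annulus with $\varphi_{in}=\varphi_{out}=\id$, which lies in $\Bigon(I)$ for every $I$) any loop in $\Diff(S^1)$ based at the identity is a framing, and its velocity field need not vanish on $S^1\setminus I$. What you actually need is weaker: every element of $\Bigon_c(I)$ admits \emph{some} representative $(A,h',z')$ with $X'(t)=-h'_t/h'_\theta$ supported in $I$. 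That requires (a) constructing one $I$-supported framing of the bigon (sweep out the lens over $I$ while fixing $S^1\setminus I$ pointwise), and (b) passing from the given framing to that one via Lemma~\ref{lem: DIAMOND} and Lemma~\ref{lem: well defined on equivalence classes} (equivalently, the defining relation in Definition~\ref{defn: tAnnc}), absorbing the change into the central scalar; for completely thin bigons, where framing classes carry a $\bbZ$ ambiguity, it is the identity-component condition in the definition of $\Bigon_c(I)$ that makes this work. This is precisely the assertion the paper itself uses, without proof, at the start of the proof of Lemma~\ref{lem: compatibility with irreducible net representations}. With that statement supplied (or cited), your argument goes through; as written, the justification of the key geometric step is false, and the paper's own proof of the present lemma sidesteps the issue entirely.
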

\begin{proof}
The Haag duality property states that the commutant of $\cA_c(I)$ is given by
$\cA_c(I')=\cA_c(I)'$,
where $I'$ is the complementary interval.
Since $\Bigon_c(I)$ and $\Diff_c(I')$ commute in $\tAnn_c$, their images commute in $\cB(\cH_{c,0})$.
By \eqref{eqn: diff generates Virasorto net} and Haag duality, if follows that
$\pi_{c,0}(\Bigon_c(I)) \subset \cA_c(I)$.
\end{proof}

Now consider an irreducible representation $(\cH_{c,h}, \pi_{c,h})$ of the Virasoro algebra corresponding to a lowest weight $h$ for $L_0$.
By \cite{Weiner17} (see also \cite{Carpi04}), $\cH_{c,h}$ carries a representation $\{\rho_I:\cA_c(I) \to \cB(\cH_{c,h})\}$ of the conformal net $\cA_c$. 
Then in light of Lemma~\ref{lem: bigons inside local algebra}, we have for each interval $I$ a pair of representations of $\Bigon_c(I)$ on $\cH_{c,h}$, given by $\pi_{c,h}$ and $\rho_I \circ \pi_{c,0}$. 
In fact, these representations coincide.

\begin{lem}\pounds{66} \label{lem: compatibility with irreducible net representations}
With notation as above, we have
\[
\rho_I \circ \pi_{c,0}\big|_{\Bigon_c(I)} = \pi_{c,h}\big|_{\Bigon_c(I)}.
\]
\end{lem}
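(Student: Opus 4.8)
\emph{The plan} is to reduce the asserted equality of semigroup representations to its infinitesimal content --- which is essentially the defining property of the conformal-net representation $\rho$ on $\cH_{c,h}$ --- and then to propagate that infinitesimal statement back up to $\Bigon_c(I)$ using that $\rho_I$ is a normal homomorphism, hence commutes with the limits defining time-ordered exponentials.

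\emph{Reduction to generators.} By \cite[Prop.~4.20]{HenriquesTener24ax} (used already in Lemma~\ref{lem: locally bounded}), every $B\in\Bigon_c(I)$ may be written $B=z\prod_{1\ge t\ge 0}\Exp(X(t))\,dt$ with $z\in\bbC^\times$ and $X\in\cP^{in}$ taking values in $\cX_c(I)$ (for a bigon the framing is pinched outside $I$, so $-h_t/h_\theta$ vanishes there). By Remark~\ref{rem: explicit construction of evolution system} the operators $\pi_{c,0}(B)$ and $\pi_{c,h}(B)$ are strong limits of products $z\prod_i e^{s_i\pi_{c,\bullet}(X(x_i))}$ over common partitions, uniformly bounded thanks to $(E_1)$; since $\rho_I$ is normal, hence strongly continuous on norm-bounded sets, it interchanges with this limit. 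It therefore suffices to prove that, for every $Y\in\cX^{in}\cap\cX_c(I)$ and every $s\ge 0$,
\[
\rho_I\big(e^{s\pi_{c,0}(Y)}\big)=e^{s\pi_{c,h}(Y)}.
\]
Here $e^{s\pi_{c,0}(Y)}$ is bounded by Lemma~\ref{lem: semigroup generated by inward pointing vector field}, and lies in $\cA_c(I)$ because $\pi_{c,0}(Y)$ is affiliated to $\cA_c(I)$, so the left-hand side is meaningful.

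\emph{The real and imaginary building blocks.} Write $Y=(a+ib)\partial_\theta$ with $a,b$ real, $b\ge 0$, both supported in $I$, and set $Y_1=a\partial_\theta$, $Y_2=b\partial_\theta$. For a real vector field $Z$ supported in $I$ one has $e^{s\pi_{c,0}(Z)}=\pi_{c,0}(\varphi^Z_s)$, the image of the flow $\varphi^Z_s\in\Diff_c(I)$, and by the construction of the representation of $\cA_c$ on $\cH_{c,h}$ in \cite{Weiner17} (see also \cite{Carpi04}) --- which is set up precisely so that its implementing diffeomorphism representation is $\pi_{c,h}$ --- we have $\rho_I(\pi_{c,0}(\varphi^Z_s))=\pi_{c,h}(\varphi^Z_s)=e^{s\pi_{c,h}(Z)}$; this handles $Y_1$. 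For $iY_2$: since $b$ is real, Remark~\ref{rem: vector field vs smeared field} shows $\pi_{c,0}(iY_2)$ is self-adjoint, and by the Fewster--Hollands inequality \eqref{eqn: QEI} it is bounded above; the one-parameter unitary group $t\mapsto e^{it\pi_{c,0}(iY_2)}$ equals $t\mapsto\pi_{c,0}(\psi_t)$ for the flow $\psi_t\in\Diff_c(I)$ of the real field $-tb\partial_\theta$, whence $\rho_I(e^{it\pi_{c,0}(iY_2)})=e^{it\pi_{c,h}(iY_2)}$ for all $t\in\bbR$ by the real case. By Stone's theorem this identifies the pushforward under $\rho_I$ of the spectral measure of $\pi_{c,0}(iY_2)$ with that of $\pi_{c,h}(iY_2)$ (likewise bounded above, by \eqref{eqn: QEI} on $\cH_{c,h}$), so applying the bounded Borel function $\lambda\mapsto e^{s\lambda}$ gives $\rho_I(e^{s\pi_{c,0}(iY_2)})=e^{s\pi_{c,h}(iY_2)}$ for $s\ge 0$.

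\emph{Gluing, and the main obstacle.} The operators $\pi_{c,0}(Y_1)$ and $\pi_{c,0}(iY_2)$ generate $C_0$-semigroups (Lemma~\ref{lem: semigroup generated by inward pointing vector field}) with the common core $W$ of finite-energy vectors, and $\pi_{c,0}(Y)=\overline{\pi_{c,0}(Y_1)+\pi_{c,0}(iY_2)}$ also generates one; the Trotter product formula then gives $e^{s\pi_{c,0}(Y)}=\lim_n\big(e^{\frac sn\pi_{c,0}(Y_1)}e^{\frac sn\pi_{c,0}(iY_2)}\big)^n$ in the strong operator topology, and the same on $\cH_{c,h}$. All factors are uniformly norm-bounded, so applying the normal map $\rho_I$ and using the previous paragraph yields $\rho_I(e^{s\pi_{c,0}(Y)})=e^{s\pi_{c,h}(Y)}$; combined with the reduction step this proves the lemma. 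The point needing the most care is extracting from \cite{Weiner17}/\cite{Carpi04} the precise input used above: that the representation of $\cA_c$ on $\cH_{c,h}$ is diffeomorphism covariant with implementing representation $\pi_{c,h}$, and that for $g$ localized in $I$ the implementing unitary is literally $\rho_I(\pi_{c,0}(g))$, so that $\rho_I\circ\pi_{c,0}$ and $\pi_{c,h}$ agree on all of $\Diff_c(I)$, centrally included. The rest is bookkeeping: checking that each semigroup genuinely lies in $\cA_c(I)$, and that the normal (but only strongly-continuous-on-bounded-sets) map $\rho_I$ may be passed through both the Trotter limit and the piecewise-constant approximation of the time-ordered exponential --- which is exactly why the uniform bounds of Lemma~\ref{lem: semigroup generated by inward pointing vector field} and property $(E_1)$ are invoked at each stage. (An alternative to the gluing step: both $\pi_{c,h}$ and $\rho_I\circ\pi_{c,0}$ are holomorphic representations of $\Bigon_c(I)$, by Theorem~\ref{thm: representation is holomorphic} and by normality of $\rho_I$ with Lemma~\ref{lem: B(H) holomorphicity} respectively, that agree on the totally real subgroup $\Diff_c(I)$; one then propagates the agreement by an identity-principle argument along the holomorphic family of bigons obtained by rescaling the imaginary part of $X$, at the cost of controlling that family up to the real boundary.)
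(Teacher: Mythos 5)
Your argument is essentially correct, but it takes a genuinely different and considerably heavier route than the paper. The paper's proof is a three-line conjugation argument: representations of conformal nets on separable Hilbert spaces are \emph{locally unitarily implementable}, so $\rho_I=U(\cdot)U^*$ for a unitary $U:\cH_{c,0}\to\cH_{c,h}$; the proof of \cite[Thm.~5.6]{Weiner17} gives $U\pi_{c,0}(X)U^*=\pi_{c,h}(X)$ for real $X$ supported in $I$, and Lemma~\ref{lem: equality unbounded operators} extends this identity of unbounded operators to all complex $X\in\cX_c(I)$; conjugating the generators by a fixed unitary conjugates the whole evolution system, so $\rho_I(\pi_{c,0}(A))=U\pi_{c,0}(A)U^*=\pi_{c,h}(A)$ directly, with no need to split $X(t)$ into real and imaginary parts or to re-approximate the time-ordered exponential. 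You instead use only normality of $\rho_I$ plus the group-level agreement on $\Diff_c(I)$, and then rebuild the semigroup statement by hand: piecewise-constant approximation (Remark~\ref{rem: explicit construction of evolution system}), Stone's theorem and Borel functional calculus for the self-adjoint part $\pi(ib\partial_\theta)$, and a Trotter product to glue. This works, and it is a nice observation that implementability proper can be traded for normality, but note two compressed points: (a) your Trotter step involves non-self-adjoint, merely quasi-contractive generators, so it needs a Chernoff-type product formula with the core $W$ and the range condition supplied by Lemma~\ref{lem: equality unbounded operators} and Lemma~\ref{lem: semigroup generated by inward pointing vector field} --- standard, but not literally ``the Trotter product formula''; and (b) the membership $e^{s\pi_{c,0}(Y)}\in\cA_c(I)$ for complex $Y$ supported in $I$, which you argue via affiliation and resolvents, is already available as Lemma~\ref{lem: bigons inside local algebra} (proved by Haag duality) and could simply be cited. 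Ultimately both proofs rest on exactly the same input from \cite{Weiner17}; the paper's use of it at the Lie-algebra level (via Lemma~\ref{lem: equality unbounded operators}) is what makes all of your approximation machinery unnecessary, while your version has the mild advantage of never choosing an implementing unitary explicitly.
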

\begin{proof}
Representations of conformal nets (on separable Hilbert spaces) are locally unitarily implementable, which is to say that there exists a unitary $U:\cH_{c,0} \to \cH_{c,h}$ such that $\rho_I(x) = UxU^*$ for all $x \in \cA_c(I)$.
Given $A \in \Bigon_c(I)$, we may write
\[
A = z\cdot \prod_{1 \ge t \ge 0} \Exp(X(t)) \, dt
\]
for $z \in \bbC^\times$, and $X \in \cP^{in}$ with the property that $X(t)$ is supported in $I$ for all $t$.
We then have
\[
\pi_{c,0}(A) = z\prod_{1 \ge t \ge 0} \Exp(\pi_{c,0}(X(t))) \, dt, \qquad \pi_{c,h}(A) = z\prod_{1 \ge t \ge 0} \Exp(\pi_{c,h}(X(t))) \, dt.
\]
Hence
\[
\rho_I(\pi_{c,0}(A)) = U\pi_{c,0}(A)U^* = z \prod_{1 \ge t \ge 0} \Exp(U\pi_{c,0}(X(t))U^*) \, dt.
\]
From the proof of \cite[Thm. 5.6]{Weiner17}, we have $U\pi_{c,0}(X(t))U^* = \pi_{c,h}(X(t))$ as unbounded operators\footnote{%
The identity $U\pi_{c,0}(X)U^* = \pi_{c,h}(X)$ is only claimed in the reference when $X=f \partial_\theta$ with $f$ real-valued.
However, in light of Lemma~\ref{lem: equality unbounded operators} this extends to all $X \in \cX$.
}%
, and so we have $\rho_I(\pi_{c,0}(A)) = \pi_{c,h}(A)$, as required.
\end{proof}

Given an arbitrary representation $\cH$ of the Virasoro conformal net $\cA_c$, we will assign an operator in $\cB(\cH)$ to an element $A \in \tAnn_c$ by factoring $A$ as a product of bigons, and then applying the conformal net representation.
In order to verify that the resulting representation is holomorphic, we need the following technical result that factoring an annulus into bigons can be done holomorphically.

Recall the embedding $\Diff(S^1)\to \Ann:\psi\mapsto A_\psi$ from \eqref{eq: annulus from diffeo}.

\begin{lem}\label{lem: factorizaton of annuli into bigons}\pounds{67}
Let $I_1$ and $I_2$ be two intervals whose interiors cover $S^1$, and let $A_0 \in  \Ann$.
Then there exists $\psi\in\Diff(S^1)$, an open neighborhood $U\subset \Ann$ of $A_0$, and holomorphic maps $f_i:U \to \Bigon(I_i)$ such that the composite
\begin{equation}\label{eq: fact into bigons}
U \xrightarrow{\,\,(f_1,f_2)\,\,} \Bigon(I_1) \times \Bigon(I_2) \xrightarrow{\,\,(A_1,A_2) \,\mapsto\, A_\psi A_1 A_2\,\,} \Ann
\end{equation}
is the inclusion of $U$ into $\Ann$.
\end{lem}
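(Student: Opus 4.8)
The plan is to factor $A$, for $A$ in a small neighbourhood of $A_0$, by cutting it along a single intermediate Jordan curve chosen so that one of the two resulting pieces is supported in $I_1$ and the other in $I_2$.

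First I would use, exactly as in the proof of Lemma~\ref{lem: gateaux holomorphic} (and ultimately \cite[Prop.~4.20]{HenriquesTener24ax}), the existence of an open neighbourhood $U\subset\Ann$ of $A_0$ together with a holomorphic family of framings $h\colon U\times S^1\times[0,1]\to\CC$: each $h(A,\cdot,\cdot)$ is a framing of a representative of $A$, the assignment $A\mapsto h(A,\cdot,\cdot)$ is holomorphic into $C^\infty(S^1\times[0,1],\CC)$, and $\gamma_{in}(A):=h(A,\cdot,0)$, $\gamma_{out}(A):=h(A,\cdot,1)$ give a holomorphic lift of $U\hookrightarrow\Ann$ to $C^\infty(S^1,\CC)^{\oplus2}$. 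Since the interiors of $I_1$ and $I_2$ cover $S^1$, the closed arcs $S^1\setminus\mathrm{int}(I_1)$ and $S^1\setminus\mathrm{int}(I_2)$ are disjoint, so I would fix once and for all a smooth $\lambda\colon S^1\to[0,1]$ with $\lambda\equiv0$ on a neighbourhood of the first and $\lambda\equiv1$ on a neighbourhood of the second, and set $\gamma_{mid}(A)(\theta):=h(A,\theta,\lambda(\theta))$.

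Granting (see below) that $\gamma_{mid}(A)$ is a smooth embedding of winding number $+1$ with $\mathrm{Int}(\gamma_{in}(A))\subseteq\mathrm{Int}(\gamma_{mid}(A))\subseteq\mathrm{Int}(\gamma_{out}(A))$, I would define $f_1(A)$ to be the annulus $\overline{\mathrm{Int}(\gamma_{mid}(A))}\setminus\mathrm{Int}(\gamma_{in}(A))$ with incoming/outgoing parametrizations $h(A,\cdot,0)$ and $h(A,\cdot,\lambda(\cdot))$, and $f_2(A)$ to be $\overline{\mathrm{Int}(\gamma_{out}(A))}\setminus\mathrm{Int}(\gamma_{mid}(A))$ with parametrizations $h(A,\cdot,\lambda(\cdot))$ and $h(A,\cdot,1)$. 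Because $\lambda\equiv0$ on $S^1\setminus I_1$ the two parametrizations of $f_1(A)$ agree there, so $f_1(A)\in\Bigon(I_1)$; likewise $f_2(A)\in\Bigon(I_2)$. Each $f_i$ is represented by a pair of curves obtained from $h$ by the fixed substitutions $t\mapsto 0,\lambda(\cdot),1$, so $f_i\colon U\to\Bigon(I_i)$ is holomorphic and continuous. Finally $f_1(A)$ and $f_2(A)$ are adjacent sub-annuli of $\overline{\mathrm{Int}(\gamma_{out}(A))}\setminus\mathrm{Int}(\gamma_{in}(A))$ glued along the common boundary curve $\gamma_{mid}(A)$ with matching parametrizations, so their composite — the pushout of \cite[Prop.~3.5]{HenriquesTener24ax} — is precisely that representative, which is $A$ in $\Ann$. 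Thus \eqref{eq: fact into bigons} is the inclusion of $U$, with $\psi=\id_{S^1}$. (If one is forced to start from a family of framings whose incoming parametrizations are only normalized up to a fixed reparametrization of $S^1$, or if one wishes to reposition the thin part of $A_0$ conveniently before running the construction, then that fixed reparametrization is the diffeomorphism $\psi$.)

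The step I expect to be the real obstacle is the parenthetical claim about $\gamma_{mid}(A)$. That $\theta\mapsto h(A,\theta,\lambda(\theta))$ is an immersion winding once around follows from the framing axioms: its velocity is $h_\theta+\lambda'h_t=h_\theta\bigl(1+\lambda'\,h_t/h_\theta\bigr)$, and non-negativity of the Jacobian of $h$ forces $h_t/h_\theta$ into the closed upper half-plane, so after shrinking $U$ and, if necessary, taking $\lambda$ with small enough derivative on the two transition arcs, the velocity never vanishes and winds exactly once. Global injectivity holds away from the thin part of $A$, since two distinct time-slices $h(A,\cdot,t)$ and $h(A,\cdot,t')$ of a framing can only meet along the common thin arc of the sub-annulus swept out between times $t$ and $t'$, where all these slices restrict to the \emph{same} embedding; when $A$ is partially thin one must additionally arrange $\lambda$ (or first apply a suitable $A_\psi$) so that the thin locus is not straddled in a way producing a self-tangency of $\gamma_{mid}(A)$, and perturb in the degenerate case of a single cusp point. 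Once $\gamma_{mid}(A)$ is known to be embedded and nested as claimed, the two regions are genuine embedded annuli, and that their ringed-space gluing carries the structure restricted from $\CC$ — hence equals $A$ — is the same removable-singularity-across-a-smooth-curve fact underlying \cite[Prop.~3.5]{HenriquesTener24ax}.
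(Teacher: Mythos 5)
Your overall strategy (cut each nearby annulus along a single intermediate curve that agrees with $\gamma_{in}$ off $I_1$ and with $\gamma_{out}$ off $I_2$) is the same as the paper's, but the two points you would need to make it work are exactly the ones left unproved, and they are the substance of the lemma. First, your construction presupposes a family of framings $h(A,\cdot,\cdot)$ defined for \emph{all} $A$ in an open neighbourhood $U\subset\Ann$ and depending holomorphically on $A$. Neither Lemma~\ref{lem: gateaux holomorphic} nor \cite[Prop.~4.20]{HenriquesTener24ax} provides this: the former only uses holomorphic families of framings pulled back along finite-dimensional holomorphic maps $M\to\tAnn_c$ (this comes from the definition of the diffeology, not from a statement about neighbourhoods in $\Ann$), and Lemma~\ref{lem: locally bounded} only extracts a \emph{smooth} family over a neighbourhood. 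Without one fixed family over $U$, your $f_i$ are not even well defined as maps on $U$ (different framings give different cut curves), and with a merely smooth family they are not known to be holomorphic. The paper's proof avoids this issue entirely: the cut curve $\delta_A$ of \eqref{eq: delta} is given by an explicit formula that is affine in $(\gamma_{A,in},\gamma_{A,out})$ with fixed coefficient functions, hence manifestly holomorphic in $A$.

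Second, the embeddedness of $\gamma_{mid}(A)$, which you correctly identify as the real obstacle, is not established by the remedies you propose, and it cannot be in this generality. You cannot take $\lambda'$ small: $\lambda$ must climb from $0$ to $1$ across the two fixed components of $\mathring I_1\cap\mathring I_2$, while the framing handed to you may slide tangentially there (where the Jacobian vanishes, $h_t/h_\theta$ is real, possibly large and of the sign that makes $1+\lambda' h_t/h_\theta$ vanish), so even immersivity can fail. Injectivity is worse: it fails whenever the sub-annulus swept between two intermediate times has a thin part meeting a transition arc, which happens for partially thin $A_0$ (and for framings that stall), and ``arranging $\lambda$, applying $A_\psi$, or perturbing'' for the single annulus $A_0$ does not give a statement uniform over a neighbourhood $U$, since the thin locus of nearby annuli moves, appears and disappears. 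These degenerate configurations are precisely what the paper's proof is about: it first uses the diffeomorphism $\psi$ substantively (not as a parametrization convenience) to normalize $A_0$ so that over each component $X,Y$ of $\mathring I_1\cap\mathring I_2$ the annulus is either completely thin or crossable by a transversal path $p$ or $q$ tangent to infinite order to the boundary; it builds the cut curve from boundary arcs joined by these paths; and it checks that the nesting conditions $\gamma_{A,in}\leqslant\delta_A\leqslant\gamma_{A,out}$ cut out an open neighbourhood, invoking \cite[Lemma 2.11]{HenriquesTener24ax} in the thin cases. Your sketch assumes away exactly these points, so as it stands it is not a proof.
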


\begin{proof}
Given smooth embeddings $\gamma, \gamma':S^1 \to \bbC^\times$ with winding number $+1$, 
let us write $\gamma \leqslant \gamma'$ to mean that the bounded connected component of $\bbC^\times \setminus \gamma(S^1)$ is contained inside the bounded connected component of $\bbC^\times \setminus \gamma'(S^1)$.
Any pair of such embeddings determines an annulus
\[
A_{\gamma,\gamma'}:=\bbC^\times\backslash\Big( \left[\parbox{3.3cm}{bounded connected \centerline{component of} \centerline{$\bbC^\times \setminus \gamma(S^1)$}}\right]
\cup
\left[\parbox{3.7cm}{unbounded connected \centerline{component of} \centerline{$\bbC^\times \setminus \gamma'(S^1)$}}\right]\Big)
\]
with boundary parametrisations provided by $\gamma$ and $\gamma'$.

Conversely, every annulus $A$ determines a a pair of embeddings $\gamma_{A,in}, \gamma_{A,out}: S^1 \to \bbC^\times$ satisfying $\gamma_{A,in}\leqslant \gamma_{A,out}$ as follows.
Let $\bbD$ be the standard unit disc, equipped with the standard parametrization of its boundary by $S^1$.
And let $\overline\bbD$ denote complex conjugate of $\bbD$, equipped with the boundary parametrization given by $z\mapsto \bar z$.
For each annulus $A$, we identify $\overline \bbD \cup A \cup  \bbD$ with $\mathbb C P^1$ by the unique map which sends $0\in \bbD$ to $0\in\mathbb C P^1$ with unit derivative, and sends $0\in \overline \bbD$ to $\infty\in\mathbb C P^1$.
The boundary parametrizations of $A$ followed by the map $A\hookrightarrow \overline \bbD \cup A \cup  \bbD \to \mathbb C P^1$ determine the two embeddings
\begin{equation}\label{eq: the procedure}
\gamma_{A,in}, \gamma_{A,out} : S^1 \to \bbC^\times.
\end{equation}
These satisfy $\gamma_{A,in}\leqslant  \gamma_{A,out}$.

From now on, we identify $A_0$ with a subset of $\bbC^\times$ following the above procedure.
Its two boundary parametrizations are given by $\varphi_{in} = \gamma_{A_0,in}$ and $\varphi_{out} = \gamma_{A_0,out}$.

Given two intervals $I_1$ and $I_2$ whose interiors cover $S^1$, the intersection $\mathring I_1\cap \mathring I_2$ admits two connected components. Call them $X$ and $Y$.
By replacing $A_0$ by $A_\psi A_0$ for a suitable diffeomorphism $\psi$, we may assume without loss of generality that:
(1) 
either 
$\varphi_{in}|_X= \varphi_{out}|_X$, or there exist a path 
$p:[0,1]\to A_0$ from $\varphi_{in}(X)$ to $\varphi_{out}(X)$ that maps the interior of $[0,1]$ to the interior of $A_0$, and:
(2) either 
$\varphi_{in}|_Y= \varphi_{out}|_Y$, or there exist a path 
$q:[0,1]\to A_0$ from $\varphi_{in}(Y)$ to $\varphi_{out}(Y)$ that maps the interior of $[0,1]$ to the interior of $A_0$.
If $\varphi_{in}|_X= \varphi_{out}|_X$ we define $p:[0,1]\to A_0$ to be any embedding into a proper subset of the interior of $\varphi_{in}(X)$, and similarly for $q$ when $\varphi_{in}|_Y= \varphi_{out}|_Y$.
In all cases,
we can further arrange the paths $p$ and $q$ to be tangent to infinite order to $\partial A_0$ at their endpoints, as depicted in the following picture:
\[
\begin{tikzpicture}
\fill[fill=red!10!blue!20!gray!30!white] circle (2.5);
\fill[fill=white] circle (1);
\draw[ultra thick, yellow]
(95:1) .. controls ++(5:.8cm) and ++(178:1cm) .. (85:2.5)
(-95:1) .. controls ++(-5:.8cm) and ++(-178:1cm) .. (-85:2.5)
(95:1) arc (95:360-95:1)
(85:2.5) arc (85:-85:2.5)
;
\draw circle (2.5);
\draw circle (1);
\draw (70:1.1) arc (70:360-70:1.1);
\draw (110:1.2) arc (110:-110:1.2);
\draw (70:2.6) arc (70:360-70:2.6);
\draw (110:2.7) arc (110:-110:2.7);
\node[left] at (-2.65,0) {$\scriptstyle \varphi_{out}(I_1)$};
\node[right] at (2.75,0) {$\scriptstyle \varphi_{out}(I_2)$};
\node[left] at (-1.05,0) {$\scriptstyle \varphi_{in}(I_1)$};
\node[right] at (1.15,0) {$\scriptstyle \varphi_{in}(I_2)$};
\node at (0,-3) {$\scriptstyle \varphi_{out}(X)$};
\node at (0,3) {$\scriptstyle \varphi_{out}(Y)$};
\node at (0,-1.4) {$\scriptstyle \varphi_{in}(X)$};
\node at (0,1.4) {$\scriptstyle \varphi_{in}(Y)$};
\node[scale=1.2] at (-1.4,1.3) {$A_0$};
\draw (95:1) .. controls ++(5:.8cm) and ++(178:1cm) .. (85:2.5);
\draw (-95:1) .. controls ++(-5:.8cm) and ++(-178:1cm) .. (-85:2.5);
\node[scale=.9] at (.1,-2) {$p$};
\node[scale=.9] at (.1,2) {$q$};
\end{tikzpicture}
\]
Let $S \subset A_0$ be the closed curve, highlighted in yellow in the above picture, obtained by following $\partial_{in}A_0$ for a while, then $p$, then $\partial_{out}A_0$ for a while, then $q$ back to $\partial_{in}A_0$.

The complement of $p([0,1])\cup q([0,1])$ inside $S$ has two connected components.
Let $S_-\subset S$ be the connected component contained in $\varphi_{in}(I_1)$,
and let $S_+\subset S$ be the connected component contained in $\varphi_{out}(I_2)$.
Provided $\psi$, $p$, $q$ are chosen appropriately, the subsets $S^1_-:=\varphi_{in}^{-1}(S_-)$ and $S^1_+:=\varphi_{out}^{-1}(S_+)$ of $S^1$ have disjoint closures.

Our goal is to show that, for $A_0$ as above, there exists an open neighborhood $U$ of $A_0$ in $\Ann$, and holomorphic maps $f_i:U \to \Bigon(I_i)$ ($i=1,2$) such that
\[
U \xrightarrow{\,\,(f_1,f_2)\,\,} \Bigon(I_1) \times \Bigon(I_2) \xrightarrow{\,\,(A_1,A_2) \,\mapsto\, A_1 A_2\,\,} \Ann
\]
is the inclusion of $U$ into $\Ann$
(note the slight difference with \eqref{eq: fact into bigons}).

Let $\delta:S^1\to \bbC^\times$ be an embedding whose image is $S$, that agrees with $\varphi_{in}$ when restricted to $S^1_-$, and agrees with $\varphi_{out}$ when restricted to $S^1_+$.
We can also arrange that $\delta(\theta)-\varphi_{in}(\theta)\,\bot\,\tfrac{d}{d\theta}\varphi_{in}(\theta)$ for all $\theta$ in a small open neighbourhood $(S^1_-)^+$ of the closure of $S^1_-$,
and that $\delta(\theta)-\varphi_{out}(\theta)\,\bot\, \tfrac{d}{d\theta}\varphi_{out}(\theta)$ for all $\theta$ in a small open neighbourhood $(S^1_+)^+$ of the closure of $S^1_+$.
Then 
$A_{\delta,\varphi_{out}}\in \Bigon(I_1)$,
$A_{\varphi_{in},\delta}\in \Bigon(I_2)$,
and 
$A_{\delta,\varphi_{out}}\cup A_{\varphi_{in},\delta}=A_0$.

Pick a partition of unity $\{\lambda_-,\lambda_+,\lambda_\circ\}$ on $S^1$ with
$\lambda_-|_{(S^1_-)^+} \equiv 1$, 
$\lambda_+|_{(S^1_+)+} \equiv 1$, and
$\lambda_\circ = 1-\lambda_--\lambda_+$.
For $A\in \Ann$ (realized as an embedded annulus $A\subset \bbC^\times$ following the procedure described in the paragraph above \eqref{eq: the procedure}), let
\begin{equation}\label{eq: delta}
\delta_A :=
\lambda_- (\gamma_{A,in} + (\delta - \gamma_{A_0,in})) 
+
\lambda_+ (\gamma_{A,out} + (\delta - \gamma_{A_0,out})) 
+
\lambda_\circ\delta.
\end{equation}
Provided $\gamma_{A,in}\leqslant \delta_A\leqslant \gamma_{A,out}$, we have our desired factorization
\[
A_{\delta_A,\gamma_{A,out}}\cup A_{\gamma_{A,in},\delta_A}=A
\]
with $A_{\delta_A,\gamma_{A,out}}\in \Bigon(I_1)$
and $A_{\gamma_{A,in},\delta_A}\in \Bigon(I_2)$.
We finish the proof by noting that the conditions $\delta_A\leqslant \gamma_{A,out}$ and $\gamma_{A,in}\leqslant \delta_A$ are satisfied in an open neighborhood $U$ of $A_0\in \Ann$.\footnote{When $\varphi_{in}|_X= \varphi_{out}|_X$ or
$\varphi_{in}|_Y= \varphi_{out}|_Y$, this uses \cite[Lemma 2.11]{HenriquesTener24ax}.}
Everything in \eqref{eq: delta} depends holomorphically on $A\in U$, so the assignments $A\mapsto A_{\delta_A,\gamma_{A,out}}$ and 
$A\mapsto A_{\gamma_{A,in},\delta_A}$ define holomorphic maps $U\to \Bigon(I_i)$ for $i=1,2$, as desired.
\end{proof}

We are now ready to prove the main result of the section.

\begin{thm} \pounds{72}
Let $(\cH,\rho)$ be a representation of the Virasoro net (not necessarily irreducible). Then there exists a unique holomorphic representation $\pi_\cH$ of $\tAnn_c$ on $\cH$ such that for every interval $I$ we have $\pi_\cH|_{\Bigon_c(I)} = \rho_I \circ \pi_{c,0}|_{\Bigon_c(I)}$.
\end{thm}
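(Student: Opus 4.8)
The plan is to reduce the statement to the case of irreducible representations, where Lemma~\ref{lem: compatibility with irreducible net representations} together with Theorems~\ref{thm: representation exists} and \ref{thm: representation is holomorphic} already provide the representation, and then to reassemble the general case by means of a direct integral. Before that, I would settle uniqueness: fix two intervals $I_1,I_2$ with $\mathring I_1\cup \mathring I_2=S^1$. Using Lemma~\ref{lem: factorizaton of annuli into bigons} (which writes any annulus, up to a diffeomorphism, as a product of a bigon in $\Bigon(I_1)$ and a bigon in $\Bigon(I_2)$), the fact that $\Diff(S^1)$ is generated by diffeomorphisms supported in $I_1$ or in $I_2$ (a partition of unity gives $\cX(S^1)=\cX(I_1)+\cX(I_2)$, and $\Diff(S^1)$ is connected), and a factorization of a $2\pi$-rotation into localized pieces to reach the generator of the central $\bbZ\subset\tAnn_c$, one sees that $\tAnn_c$ is generated as a semigroup by $\bigcup_I\Bigon_c(I)$. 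Hence any holomorphic representation of $\tAnn_c$ is determined by its restrictions to the $\Bigon_c(I)$, which gives uniqueness and also shows that, once constructed, $\pi_\cH$ does not depend on any auxiliary choices.

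For existence, I would assume $\cH$ separable and decompose the net representation $\rho$ as a direct integral $\cH=\int_X^\oplus\cH_\xi\,d\mu(\xi)$, $\rho_J=\int_X^\oplus\rho_{\xi,J}\,d\mu(\xi)$, of irreducible representations of $\cA_c$; by the classification of irreducible representations of the Virasoro net, $(\cH_\xi,\rho_\xi)$ is unitarily equivalent to the standard representation on $\cH_{c,h(\xi)}$ for a measurable $\xi\mapsto h(\xi)$, and Lemma~\ref{lem: compatibility with irreducible net representations} identifies $\rho_{\xi,J}\circ\pi_{c,0}\big|_{\Bigon_c(J)}$ with $\pi_{c,h(\xi)}\big|_{\Bigon_c(J)}$, where $\pi_{c,h(\xi)}\colon\tAnn_c\to\cB(\cH_{c,h(\xi)})$ is the holomorphic representation of Theorems~\ref{thm: representation exists} and \ref{thm: representation is holomorphic}. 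I would then set
\[
\pi_\cH(A)\ :=\ \int_X^\oplus \pi_{c,h(\xi)}(A)\,d\mu(\xi).
\]
The point that makes this legitimate is that the norm estimates of Remark~\ref{rem: explicit norm estimates} involve only a universal constant (depending on $c$, not on $h$): for $A$ ranging over the annuli associated to a fixed bounded family of framings this bounds $\norm{\pi_{c,h(\xi)}(A)}$ uniformly in $\xi$, so the integrand is essentially bounded; measurability of $\xi\mapsto\pi_{c,h(\xi)}(A)$ follows by checking it on products of bigons (where it is measurable by construction) together with continuity in $A$. That $\pi_\cH$ is a semigroup homomorphism is immediate fiberwise.

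Holomorphicity is obtained by passing the fiberwise statements through the integral. For fixed $\eta,\zeta\in\cH$ the function $A\mapsto\langle\pi_\cH(A)\eta,\zeta\rangle=\int_X\langle\pi_{c,h(\xi)}(A)\eta_\xi,\zeta_\xi\rangle\,d\mu(\xi)$ is, along any holomorphic family in $\tAnn_c$, an integral of holomorphic functions (Theorem~\ref{thm: representation is holomorphic}) dominated by the uniform bound, hence holomorphic; so $\pi_\cH$ is G\^ateaux holomorphic (independently of the operator topology, by Lemma~\ref{lem: B(H) holomorphicity}) and locally bounded exactly as in Lemma~\ref{lem: locally bounded}. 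The continuity argument of Theorem~\ref{thm: representation is holomorphic} then applies verbatim: local boundedness plus G\^ateaux holomorphicity give norm continuity of $\pi_\cH|_{\tAnn_c^\circ}$ via \cite[Prop.~3.7]{Dineen}, and factoring an arbitrary vector through some $r^{L_0}$ upgrades this to strong continuity on all of $\tAnn_c$. For the compatibility condition, let $B\in\Bigon_c(I)$; then $\pi_{c,0}(B)\in\cA_c(I)$ by Lemma~\ref{lem: bigons inside local algebra}, so $\rho_I(\pi_{c,0}(B))$ lies in the von Neumann algebra $\cR:=\bigvee_J\rho_J(\cA_c(J))$, which commutes with its center $Z(\cR)$; since the direct integral diagonalizes $Z(\cR)$, the operator $\rho_I(\pi_{c,0}(B))$ is decomposable with fibers $\rho_{\xi,I}(\pi_{c,0}(B))=\pi_{c,h(\xi)}(B)$, whence $\rho_I(\pi_{c,0}(B))=\int_X^\oplus\pi_{c,h(\xi)}(B)\,d\mu(\xi)=\pi_\cH(B)$, as required.

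I expect the main obstacle to be the reduction step itself: making precise the direct-integral decomposition of an arbitrary net representation into the standard irreducible sectors of the Virasoro net (invoking the classification of sectors and a measurable selection), and, above all, checking that holomorphicity and the algebraic identities survive the integral. The latter is exactly where the uniformity in Remark~\ref{rem: explicit norm estimates} — the fact that the constant there depends only on $c$ — is indispensable; without it one could neither make sense of $\int_X^\oplus\pi_{c,h(\xi)}(A)\,d\mu(\xi)$ nor differentiate it as needed. Everything else (uniqueness, the semigroup property, the compatibility on bigons) is comparatively routine given the results already established.
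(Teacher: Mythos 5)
Your proposal is correct in its essentials and follows the same skeleton as the paper: decompose $\rho$ as a direct integral of irreducibles, define $\pi_\cH$ fiberwise using Theorems~\ref{thm: representation exists} and~\ref{thm: representation is holomorphic}, and verify compatibility on bigons via Lemma~\ref{lem: compatibility with irreducible net representations} together with decomposability of $\rho_I(\pi_{c,0}(A))$. Where you genuinely diverge is in how regularity of $\pi_\cH$ is obtained. The paper never passes holomorphicity or continuity through the direct integral directly: it factors a neighbourhood of any annulus as $z\,\psi\, g_1(B)g_2(B)$ with $g_j$ continuous/holomorphic lifts into $\Bigon_c(I_j)$ (Lemma~\ref{lem: factorizaton of annuli into bigons} and \cite[Prop.~5.11]{HenriquesTener24ax}), and then transfers the regularity of $\pi_{c,0}$ through $\rho_{I_j}$, using only that a normal representation of a von Neumann algebra is norm-nonincreasing (hence norm holomorphic as a bounded linear map) and SOT--SOT continuous on bounded sets. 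You instead exploit that the constant in Remark~\ref{rem: explicit norm estimates} (ultimately the Fewster--Hollands bound) depends only on $c$ and not on the lowest weight, obtaining a fiber-uniform local norm bound, and then push G\^ateaux holomorphicity through the integral by dominated convergence in the WOT (legitimate by Lemma~\ref{lem: B(H) holomorphicity}), finally rerunning the Dineen-plus-$r^{L_0}$ argument of Theorem~\ref{thm: representation is holomorphic}. Both work; your route makes the $h$-uniformity of the energy/QEI estimates do the heavy lifting and avoids re-factorizing for the regularity step, while the paper's route avoids any quantitative uniformity across fibers, needing only softness of normal representations. A few points you should tighten: (1) the existence of a direct integral decomposition into \emph{irreducibles} is not automatic for an arbitrary net representation; the paper gets it from the split property \cite{MorinelliTanimotoWeiner16} together with \cite[Prop.~56]{KaLoMu01}, and the identification of the fibers uses \cite[Prop.~2.1]{Carpi04} -- you flag this but should name these inputs; (2) measurability of $\xi\mapsto\pi_{c,h(\xi)}(A)$ still requires the bigon factorization (plus fragmentation of diffeomorphisms, and a word about the central $\bbC^\times\times\bbZ$, which also underlies your uniqueness claim that $\tAnn_c$ is generated by $\bigcup_I\Bigon_c(I)$ -- a point the paper itself leaves implicit); (3) in the direct integral, ``$r^{L_0}$'' should be read as $\pi_\cH(r^{\ell_0})=\int^\oplus r^{L_0^{(\xi)}}d\mu(\xi)$, and the density of $\bigcup_{r<1}\pi_\cH(r^{\ell_0})\cH$ follows from $\pi_\cH(r^{\ell_0})\to 1$ strongly as $r\to1$ by dominated convergence; with that reading your ``verbatim'' claim is accurate.
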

\begin{proof}
We first construct a representation $\pi_\cH$ of $\tAnn_c$ on $\cH$ that is compatible with the $\rho_I$.
As a consequence of the split property for $\cA_{c}$ (c.f. \cite{MorinelliTanimotoWeiner16}) and \cite[Prop. 56]{KaLoMu01}), the representation $\rho$ decomposes as a direct integral of irreducible representations.
That is, we have a measurable family of Hilbert spaces $\{\cH_s\}_{s \in S}$, irreducible representations $\rho_s$ of $\cA_c$ on $\cH_s$, and a unitary equivalence
\[
\cH \cong \int^\oplus \cH_s \, ds
\]
which intertwines $\int^\oplus \rho_{s,I} \, ds$ and $\rho_I$ for all $I$.
By \cite[Prop. 2.1]{Carpi04}, every irreducible representation of $\cA_c$ is equivalent to some $\cH_{c,h}$, and so we have representations $\pi_s$ of $\tAnn_c$ on $\cH_s$.
We would like to construct a representation $\pi$ of $\tAnn_c$ on $\cH$ by setting $\pi_\cH = \int^\oplus  \pi_s \, ds$, but in ordered to know that the formula $\int^{\oplus} \pi_s(A)$ defines a bounded operator on $\cH$ we first need to verify the following claim: for every $A \in \tAnn_c$ the family of operators $\pi_s(A)$ is measurable and essentially bounded.
By Lemma~\ref{lem: factorizaton of annuli into bigons} every annulus factors as a product of bigons and a diffeomorphism. In fact, every diffeomorphism factors as a product of diffeomorphisms with support in a proper interval, so in fact every annulus factors as a product of bigons.
The product of measurable essentially bounded families of operators is again a family of the same type, so it suffices to verify the claim when $A \in \Bigon_c(I)$ for some $I$.
In this case, we have
\[
\rho_I(\pi_{c,0}(A)) = \int^\oplus \rho_{s,I}(\pi_{c,0}(A)) \, ds = \int^\oplus \pi_s(A) \, ds
\]
by Lemma~\ref{lem: compatibility with irreducible net representations} (which implicitly asserts that $s\mapsto \pi_s(A)$ is measurable), and since $\rho_I(\pi_{c,0}(A))$ is bounded, the family $\pi_s(A)$ is essentially bounded. 

We now define $\pi_{\cH}:\tAnn_c \to \cB(\cH)$ by the formula 
\[
\pi_\cH(A) = \int^\oplus \pi_s(A) \, ds.
\]
The right-hand side defines a bounded operator by the above claim, and this is evidently a representation of $\tAnn_c$ since each $\pi_s$ is.
Moreover we have already verified that when $A \in \Bigon_c(I)$ we have $\rho_I(\pi_{c,0}(A)) = \pi_\cH(A)$, as required.

It remains to show that $\pi_\cH$ is SOT continuous and Gateaux holomorphic.
We consider continuity first.
Fix $A_0 \in \tAnn_c$.
By \cite[Prop. 5.5]{HenriquesTener24ax}, the projection $\tAnn_c \to \Ann$ is locally trivial, so $A_0$ has a neighborhood of the form $V=D \times U$ where $U$ is a neighborhood of $A_0$ in $\Ann$ and $D$ is a bounded subset of $\bbC^\times$.
By shrinking $U$ if necessary, we may assume by Lemma~\ref{lem: factorizaton of annuli into bigons} that there exist intervals $I_1$ and $I_2$ and holomorphic maps $f_j:U \to \Bigon(I_j)$ such that for every $B \in U$ we have $B = \psi f_1(B)f_2(B)$ for some fixed $\psi \in \Diff(S^1)$.
Further shrinking $U$ if necessary, we may choose continuous lifts $g_j(B) \in \Bigon_c(I_j)$ such that
\[
(z,B) = z \psi g_1(B) g_2(B), \qquad (z,B) \in V.
\]
We then have
\begin{equation}\label{eqn: piH zB factorization}
\pi_\cH(z,B) = z \pi_\cH(\psi) \pi_\cH(g_1(B)) \pi_\cH(g_2(B)).
\end{equation}
Recall that we have $\pi_\cH(g_j(B)) = \rho_{I_j}(\pi_{c,0}(g_j(B)))$ for $j=1,2$.
By Lemma~\ref{lem: locally bounded}, we may further shrink $U$ so that the images $\pi_{c,0}(g_j(U))$ are bounded in norm. 
As representations of von Neumann algebras are norm-non-increasing, it follows that the images $\rho_{I_j}(\pi_{c,0}(g_j(U)) = \pi_{\cH}(g_j(U))$ are bounded in norm as well.

We claim that $B\mapsto \pi_\cH(g_j(B))$  is continuous as a map $U \to \cB(\cH)$, when $\cB(\cH)$ is equipped with the SOT topology.
Indeed, using that $g_j$ is continuous on $U$, that $\pi_{c,0}$ is  SOT-continuous by Theorem~\ref{thm: representation is holomorphic}, and that $\rho_I$ is SOT-SOT continuous on bounded sets (because it's a representation of a von Neumann algebra), it follows that
$B\mapsto \rho_I(\pi_{c,0}(g_j(B)))=\pi_\cH(g_j(B))$ is continuous.
Moreover, as multiplication is SOT-jointly continuous on bounded sets, it follows from the factorization \eqref{eqn: piH zB factorization} that $\pi_\cH(z,B)$ is SOT-continuous as a function of $(z,B)$.

We now show that $\pi_\cH$ is (norm) Gateaux holomorphic.
Let $M \to \tAnn_c:m \mapsto A_m$ be a holomorphic family, for some open subset $M$ of $\bbC^n$.
We fix a point $m_0 \in M$, and assume $m_0=0$ without loss of generality.
Shrinking $M$ if necessary, we may argue as before that $A_0$ has a neighborhood in $\tAnn_c$ of the form $D \times U$, and we may choose $f_j:U \to \Bigon(I_j)$ and $\psi \in \Diff(S^1)$ such that $B=\psi f_1(B) f_2(B)$ for all $B \in U$.
By \cite[Prop. 5.11]{HenriquesTener24ax}, we may choose holomorphic lifts $g_j$ to $\Bigon_c(I_j)$ of the functions $f_j$.
We may thus write 
\[
A_m = z(m) \psi g_1(A_m)g_2(A_m)
\]
for some holomorphic function $z:M \to \bbC^\times$.
Applying $\pi_{\cH}$, this gives:
\begin{equation}\label{eqn: piH Am factorization}
\pi_{\cH}(A_m) = z(m) \pi_{\cH}(\psi) \pi_\cH(g_1(A_m)) \pi_{\cH}(g_2(A_m)).
\end{equation}
Recall that
\[
\pi_\cH(g_j(A_m)) = \rho_{I_j}(\pi_{c,0}(g_j(A_m))).
\]
The maps $m\mapsto g_j(A_m)$ are holomorphic, $\pi_{c,0}$ is norm Gateaux holomorphic by Theorem~\ref{thm: representation is holomorphic}, and $\rho_{I_j}$ is norm holomorphic because it's a bounded linear map.
Hence  $m \mapsto \pi_\cH(g_j(A_m))$ is holomorphic into $\cB(\cH)$ with the norm topology.
As multiplication on $\cB(\cH)$ is norm holomorphic, it now follows from \eqref{eqn: piH Am factorization} that $\pi_{\cH}(A_m)$ is holomorphic as well.
\end{proof}

\def\lfhook#1{\setbox0=\hbox{#1}{\ooalign{\hidewidth
  \lower1.5ex\hbox{'}\hidewidth\crcr\unhbox0}}}

\end{document}